         \let\geq=\geqslant
\providecommand\mathbb[1]{\mathsf{##1}}
       \providecommand\mathfrak[1]{\mathcal{##1}}}
    \providecommand\mathbb[1]{\mathsf{#1}}
    \providecommand\mathfrak[1]{\mathcal{#1}}
\newtheorem{theorem}{Theorem}
\newtheorem{proposition}[theorem]{Proposition}
\newtheorem{lemma}[theorem]{Lemma}
\newtheorem{corollary}[theorem]{Corollary}
\newtheorem{definition}[theorem]{Definition}
\newtheorem{remark}[theorem]{Remark}
\newcommand{\ba}{\backslash}
\newcommand{\R}{\mathbb{R}}
\newcommand{\G}{\mathbb{G}}
\newcommand{\V}{\mathcal{V}}
\newcommand{\B}{\mathcal{B}}
\newcommand{\T}{\mathbb{T}}
\newcommand{\h}{\mathbb{H}}
\newcommand{\bs}{\mathrm{bs}}
\newcommand{\bp}{\mathrm{bp}}
\newcommand{\olc}{\mathrm{olc}}
\newcommand{\olh}{\mathrm{olh}}
\newcommand{\nl}{\mathrm{nl}}
\newcommand{\ol}{\mathrm{ol}}
\newcommand{\br}{\mathrm{b}}
\begin{document}

\title[Types of embedded graphs and their Tutte polynomials]{Types of embedded graphs and their Tutte polynomials}

\author[S.~Huggett and I.~Moffatt]{
\uppercase{Stephen Huggett} \\ Centre for Mathematical Sciences,  University of Plymouth,  Plymouth,  PL4 8AA
\addressbreak email: {\tt s.huggett@plymouth.ac.uk}
\and\ 
\uppercase{Iain Moffatt} \\
Department of Mathematics,   Royal Holloway, University of London, \addressbreak  Egham, TW20 0EX
\addressbreak  email: {\tt iain.moffatt@rhul.ac.uk}
}


\volume{121}
\pubyear{1997}
\setcounter{page}{1}
\receivedline{Received \textup{xxxx};
              revised \textup{xxxx}}
\maketitle

\begin{abstract}
We take an elementary and systematic approach to the problem of extending the Tutte polynomial to the setting of embedded graphs. 
Four notions of embedded graphs arise naturally when considering deletion and contraction operations on graphs on surfaces. We give a description of each class in terms of coloured ribbon graphs. We then identify a universal deletion-contraction invariant (i.e., a `Tutte polynomial') for each class. We relate these to graph polynomials in the literature, including the Bollob\'as--Riordan, Krushkal, and Las~Vergnas polynomials, and give state-sum formulations, duality relations, deleton-contraction relations, and quasi-tree expansions for each of them.
\end{abstract}

\section{Introduction}

\subsection{Overview}
The phrase `topological Tutte polynomial'  refers to an analogue of the Tutte polynomial for graphs in surfaces or for related objects. Our aim here is to define topological Tutte polynomials by (i) starting from first principles, and (ii) proceeding in a canonical way.

To do so we take as our starting point the slightly vague but uncontroversial notion of a Tutte polynomial of an object as `something defined through a deletion-contraction relation' like the one for the classical Tutte polynomial (given below in \eqref{d1}). Crucial to this philosophy is that the deletion-contraction procedure should terminate on trivial objects (such as edgeless graphs) just as the classical Tutte polynomial does. This requirement constitutes a key difference between the approach here and those taken by  M.~Las~Vergnas \cite{Las78}, B.~Bollob\'as and O.~Riordan \cite{BR01,BR02}, and V.~Krushkal \cite{Kr} whose topological Tutte polynomials do not have this property. (However, we will see that these polynomials can be obtained by restricting the domains of those constructed here.)

To satisfy our second requirement (that we work canonically) we proceed by decoupling the definition of the Tutte polynomial of a graph from the specific language of graphs (such as loops, bridges, etc.), and formulating it in a way that depends on the existence of (an appropriate) deletion and contraction. Since graphs on surfaces also have concepts of deletion and contraction, this formulation enables us to obtain topological Tutte polynomials.

However, there are different notions of how to delete and contract edges in an embedded graph, and a requirement that the domain be minor-closed means that each notion results in a different `Tutte polynomial'. Our canonical approach thus leads to a family of four topological Tutte polynomials:
\begin{center}
\begin{tabular}{| l | l |}
\hline
\textbf{Polynomial} & \textbf{Tutte polynomial of graphs $\dots$}
\\
\hline
$T_{ps}(G;x,y,a,b)$ & $\dots$ embedded in pseudo-surfaces \\
\hline
$T_{cps}(G;x,y,z)$& $\dots$ cellularly embedded  in pseudo-surfaces \\
\hline
$T_s(G;x,y,z)$& $\dots$ embedded in surfaces \\
\hline
$T_{cs}(G;x,y)$& $\dots$ cellularly embedded in surfaces\\
\hline
\end{tabular}
\end{center}
For each of these polynomials we provide (i) `full' deletion-contraction procedures that terminate on edgeless embedded graphs, (ii) a state-sum formulation, (iii) an activities expansion, (iv) a universality theorem, and (v) a duality formula. These are all summarised in Section~\ref{djkda}.

Furthermore, the most common topological Tutte polynomials from the literature (namely the Las~Vergnas polynomial \cite{Las78}, the Bollob\'as--Riordan polynomial \cite{BR01,BR02}, and the Krushkal polynomial \cite{But,Kr}) can each be recovered from the above family by restricting domains (see Section~\ref{dhjk}). However, we emphasise that the polynomials presented here have `full' deletion-contraction relations that take edgeless graphs as the base, while the polynomials from the literature do not. 

This paper has the following structure.
The remainder of this section outlines our approach and philosophy. 
Section~\ref{s.3} describes various notions of embedded graphs and their minors, and introduces a description of each of these as (coloured) ribbon graphs.  
Section~\ref{s.5} introduces the family of topological Tutte polynomials.
Section~\ref{s.7} is concerned with activities (or tree and quasi-tree) expansions.

We assume a familiarity with basic graph theory and of the elementary parts of the topology of surfaces. Given $A\subseteq E$, we use $A^c$ to denote its complement $E\ba A$. For notational simplicity, in places we denote sets of size one by their unique element, for example writing $E\ba e$ in place of $E \ba \{e\}$. Initially we use the phrase `graphs on surfaces' fairly loosely, but make precise what we mean in Section~\ref{s.3}.

\subsection{A review of the standard definitions of the Tutte polynomial of a graph}\label{s.1}

Unsurprisingly, given the wealth of its applications, there are many formulations, and indeed definitions, of the Tutte polynomial. Among these, there are three that can be regarded as the standard definitions. We take these as our starting point. Throughout this section we let $G=(V,E)$ be a (non-embedded) graph, and note that graphs here may have loops and multiple edges.

Our first definition of the Tutte polynomial is the recursive \emph{deletion-contraction definition}. This defines the \emph{Tutte polynomial}, $T(G;x,y)\in \mathbb{Z}[x,y]$ as the graph polynomial defined recursively by the \emph{deletion-contraction} relations
\begin{equation}\label{d1}
T(G;x,y) =\begin{cases}   xT(G / e;x,y) &  \text{if $e$ is a bridge,} \\
 yT(G\backslash e;x,y ) &  \text{if $e$ is a loop,} \\
 T(G\backslash e;x,y ) + T(G/ e;x,y ) &  \text{if $e$ is an ordinary edge,}\\
   1 &  \text{if }E(G)=\emptyset.
 \end{cases}
 \end{equation}
Here, $G\ba e$ denotes the graph obtained from $G$ by deleting the edge $e$, and $G/ e$ the graph obtained by contracting $e$. An edge $e$ of $G$ is a \emph{bridge} if its deletion increases the number of components of the graph, a \emph{loop} if it is incident with exactly one vertex, and is \emph{ordinary} otherwise.

The deletion-contraction relations determine a polynomial, since their repeated application to edges in $G$ enables us to express the Tutte polynomial of $G$ as a  $\mathbb{Z}[x,y]$--linear combination of edgeless graphs, on which $T$ has the value 1. Such a repeated application of the deletion-contraction relations \emph{does} require a choice of the order of edges. $T(G;x,y)$ is independent of this choice, so it is \emph{well-defined}, but not trivially so. We will come back to this point shortly.

Our second standard definition is the \emph{state-sum definition}. This defines the Tutte polynomial as the graph polynomial $T(G;x,y)\in \mathbb{Z}[x,y]$ defined by
 \begin{equation}\label{d2}
     T(G;x,y) = \sum_{A\subseteq E} (x-1)^{r(E)-r(A)}  (y-1)^{|E|-r(A)},
 \end{equation}
where  $r(A)$ denotes the \emph{rank} of the spanning subgraph $(V,A)$ of $G$, which can be defined as the number of edges in a maximal spanning forest of $(V,A)$. 
Note that $r(A)=|V|-k(A)$ where $k(A)$ is the number of connected components of $(V,A)$.

Our third standard definition, the \emph{activities definition}, writes the Tutte polynomial as a bivariate generating function. 
Fix a linear order of the edges of $G$, and for simplicity assume that $G$ is connected.
Suppose that $F$ is a  spanning tree of $G$ and let $e$ be an edge of $G$.
If $e\notin F$, then the graph $F\cup e$ contains a unique cycle, and we say that $e$ is \emph{externally active} with respect to $F$ if it is the smallest edge in this cycle.
If $e\in F$, then we say that  $e$ is \emph{internally active} if $e$ is the smallest edge of $G$ that can be added to $F\ba e$ to recover a spanning tree of $G$.

Then the activities definition of the Tutte polynomial of a connected graph is the graph polynomial $T(G;x,y)\in \mathbb{Z}[x,y]$ obtained by fixing a linear order of $E$ and setting
\begin{equation}\label{d3}
  T(G;x,y)=\sum\limits_{F}x^{\mathrm{IA}(F)} y^{\mathrm{EA}(F)},
\end{equation}
where the sum is over all  spanning trees $F$ of $G$, and where $\mathrm{IA}(F)$ (respectively $\mathrm{EA}(F)$) denotes the number internally active (respectively, externally active) edges of $G$ with respect to $F$ and the linear ordering of $E$.

The activities definition requires a choice of edge order and it is far from obvious (and quite wonderful) that the result of $\eqref{d3}$ is independent of this choice.

Any one of \eqref{d1}--\eqref{d3} can be taken to be \emph{the} definition of the Tutte polynomial, with the other two being recovered as theorems.
However, the easiest way to prove the equivalence of all three definitions is to show that the sums in \eqref{d2} and \eqref{d3} both satisfy the relations in \eqref{d1}. Since \eqref{d2} is clearly independent of choice of edge order it follows that all three expressions are, and we can take any as the definition.

\subsection{Choosing the fundamental definition}
Suppose we are seeking to define a Tutte polynomial for a different class of objects (such as graphs embedded in surfaces).
The first step is to decide what we mean by the expression `a Tutte polynomial'. For this we need to decide which definition of the Tutte polynomial we regard as being the fundamental one. Here we choose the deletion-contraction definition given in \eqref{d1} as the most fundamental, for the following reasons.

As combinatorialists our interest in the Tutte polynomial lies in the fact that it contains a vast amount of combinatorial information about a graph. The reason for this is that the Tutte polynomial stores all graph parameters which satisfy the deletion-contraction relations, as follows.
\begin{theorem}[Universality]\label{thm1}
 Let $\mathcal{G}$ be a minor-closed class of graphs. Then there is a unique map $U: \mathcal{G}\rightarrow \mathbb{Z}[x,y,a,b,\gamma]$ such that
 \begin{equation}\label{EMM:eq:U}
U(G) =\begin{cases}   x\,U(G/ e) &  \text{if $e$ is a bridge,} \\
 y\,U(G\backslash e ) &  \text{if $e$ is a loop,} \\
 a\,U(G\backslash e ) + b\,U(G/ e ) &  \text{if $e$ is ordinary edge,}\\
   \gamma^n &  \text{if } E(G)=\emptyset \text{ and } v(G)=n.
 \end{cases}
 \end{equation}
Moreover
 \begin{equation}\label{EMM:eq:U2}
      U(G)= \gamma^{k(G)}a^{n(G)}
          b^{r(G)}
               T\left( G;\frac{x}{b}, \frac{y}{a}\right) .
 \end{equation}
\end{theorem}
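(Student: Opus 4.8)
The plan is to prove uniqueness and existence separately, with the closed formula \eqref{EMM:eq:U2} serving as the explicit witness for existence.

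For uniqueness I would induct on $|E(G)|$. Two maps $U,U'$ satisfying \eqref{EMM:eq:U} agree on every edgeless graph, since both take the value $\gamma^{n}$ there. If $E(G)\neq\emptyset$, choose any edge $e$; it is a bridge, a loop, or an ordinary edge, and in each case \eqref{EMM:eq:U} expresses both $U(G)$ and $U'(G)$ in terms of the values on $G\ba e$ and/or $G/ e$. These graphs have fewer edges and, since $\mathcal{G}$ is minor-closed, again lie in $\mathcal{G}$, so the inductive hypothesis gives $U(G)=U'(G)$. Note that this needs no separate ``consistency'' check: an invariant satisfying \eqref{EMM:eq:U} must obey the relevant relation for \emph{every} applicable edge, and we only use one of them. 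The same induction incidentally shows that any $U$ obeying \eqref{EMM:eq:U} is valued in $\mathbb{Z}[x,y,a,b,\gamma]$, since each reduction step multiplies by one of $x,y,a,b$ and the base values $\gamma^{n}$ are monomials.

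For existence I would verify directly that
\[
  \widetilde U(G):=\gamma^{k(G)}a^{n(G)}b^{r(G)}\,T\!\left(G;\tfrac{x}{b},\tfrac{y}{a}\right),
\]
a well-defined element of $\mathbb{Z}[x,y,\gamma][a^{\pm1},b^{\pm1}]$ because $T$ itself is well-defined (Section~\ref{s.1}), satisfies the four cases of \eqref{EMM:eq:U}. This is a case analysis on an edge $e$, tracking how the basic invariants move under deletion and contraction: contraction never changes the number of components; contracting a non-loop drops the rank by one and fixes the nullity; deleting a non-bridge drops the nullity by one and fixes the rank; deleting a bridge raises the number of components by one. Applying \eqref{d1} to $T(G;x/b,y/a)$ and pushing it through the prefactor then produces exactly $x\,\widetilde U(G/ e)$, $y\,\widetilde U(G\ba e)$, and $a\,\widetilde U(G\ba e)+b\,\widetilde U(G/ e)$ in the bridge, loop, and ordinary cases: the factor $x/b$ coming from a bridge absorbs the single surplus power of $b$ in $b^{r(G)}$, the factor $y/a$ from a loop absorbs the surplus power of $a$, and in the ordinary case the one missing edge on each of $G\ba e$ and $G/ e$ supplies the $a$ and the $b$. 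When $E(G)=\emptyset$, $T$ of an edgeless graph is $1$, so $\widetilde U(G)=\gamma^{n}$. Thus $\widetilde U$ satisfies \eqref{EMM:eq:U}, and by the remark at the end of the uniqueness paragraph (or by expanding the state sum \eqref{d2} and cancelling the negative powers of $a$ and $b$ term by term) it is in fact $\mathbb{Z}[x,y,a,b,\gamma]$-valued. With uniqueness, $U=\widetilde U$, which is \eqref{EMM:eq:U2}.

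The only substantive point --- the ``main obstacle'' --- is the bookkeeping in the existence step: one must get the changes in $r$, $n$, and $k$ exactly right for each of the three edge types and check that the powers of $a$ and $b$ always balance (in particular that no negative exponents survive in $\widetilde U(G)$). Everything else is formal, precisely because the genuinely delicate classical fact --- that \eqref{d1} is independent of the order in which edges are reduced --- has already been packaged into the well-definedness of $T$, which we are free to invoke.
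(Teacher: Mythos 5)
Your proposal is correct. The paper itself gives no proof of Theorem~\ref{thm1} --- it is quoted as the classical universality (``recipe'') theorem for the Tutte polynomial --- and your argument is exactly the standard one: uniqueness by induction on $|E(G)|$ using minor-closure, and existence by checking that $\gamma^{k(G)}a^{n(G)}b^{r(G)}T\left(G;\tfrac{x}{b},\tfrac{y}{a}\right)$ satisfies each case of \eqref{EMM:eq:U}, with the rank/nullity/component bookkeeping you describe being right in all three edge cases and the genuinely delicate point (order-independence of \eqref{d1}) correctly outsourced to the well-definedness of $T$ from Section~\ref{s.1}. Your closing observation that integrality of the coefficients (no surviving negative powers of $a$ or $b$) follows from rerunning the induction on the verified recursion, rather than from inspecting \eqref{EMM:eq:U2} directly, is the cleanest way to finish.
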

For us, this is the salient feature of the Tutte polynomial, and we take it as the fundamental definition. We believe this choice is uncontroversial, but highlight some interesting recent work of
A.~Goodall, T.~Krajewski, G.~Regts and L.~Vena \cite{Goo} in which they defined a polynomial of graphs on surfaces as an amalgamation of a flow polynomial and tension polynomial for graphs on surfaces.

Before moving on, let us comment that we will meet generalisations of the Tutte polynomial in  universal forms, i.e., in a form analogous to  $U(G)$ of \eqref{EMM:eq:U}. In these we will be able to spot that we can reduce the number of variables to obtain an analogue of $T(G)$, but there will be choices in how this can be done. We will make such choices in a way that results in the polynomial having the cleanest duality relation, i.e., one that is closest to that for the Tutte polynomial, which states that for a plane graph $G$, 
 \begin{equation}\label{eld}
               T\left( G^*;x,y\right) =  T\left( G;y,x\right)  .
 \end{equation}

\subsection{Extending the Tutte polynomial}\label{s.2}

Our interest here is in extending the definition of the Tutte polynomial from graphs to other classes of combinatorial objects. Specifically here we will consider graphs on surfaces, although the general theory we describe does extend to other settings.

The definition of a `Tutte polynomial' requires three things:
\begin{enumerate}[label={T\arabic*}]
    \item \label{t1} A class of objects. (We are constructing a Tutte polynomial for this class.) 
    \item \label{t2} A notion of deletion and contraction for this class. The class must be closed under these operations, and we require that every object can be reduced to a trivial object (here edgeless graphs) using them. 
    \item \label{t3} A canonical way to fix  the cases of the deletion-contraction definition. (That is, a canonical way to determine the analogues of bridges, loops, and ordinary edges.)
\end{enumerate}

Although our procedures here apply more generally (see Remark~\ref{rem1} on page~\pageref{rem1}) we restrict our enquiries to polynomials of graphs on surfaces. So far in this discussion we have been intentionally vague about what we mean when we say `graphs on surfaces'. Our reason for doing this is that exactly what we mean by the phrase is highly dependent upon our choice of deletion and contraction, and so answers to \ref{t1} and \ref{t2} are highly dependent upon each other. For example, consider the torus with a graph drawn on it consisting of one vertex and two loops, a meridian and a longitude. If we delete the longitude, should the result be a single loop not cellularly embedded on the torus, or a single loop cellularly embedded on the sphere? (In the latter case we would have removed a handle as well as the edge it carried.)  
We now move to the problem of making precise what we mean by `graphs on surfaces', obtaining suitable constructions to satisfy \ref{t1} and \ref{t2}.

\section{Topological graphs and their minors}\label{s.3}

\subsection{A review of the topology of surfaces}

A \emph{surface} $\Sigma$ is a compact topological space in which distinct points have distinct neighbourhoods, and each point has a neighbourhood homeomorphic to an open disc in $\R^{2}$. Surfaces need not be connected. 
If the connected surface $\Sigma$ is orientable, then it is homeomorphic to a sphere or the connected sum of tori. If it is not orientable, then it is homeomorphic to  the connected sum of real projective planes.

We will also need \emph{surfaces with boundary}, which are surfaces except that they also have some points---the boundary points---all of whose neighbourhoods are homeomorphic to half of an open disc $\{(x,y)\in\R^{2}:x^{2}+y^{2}<1,x\geq 0\}$. Each component of the boundary of a surface is homeomorphic to a circle. Given a surface with boundary $\Sigma'$, we can obtain a surface $\Sigma$ by `capping' each boundary component. This just means identifying each of the circles with the boundaries of (disjoint) closed discs. Then the genus of $\Sigma'$ is defined to be that of $\Sigma$.

The number $n$ of tori or real projective planes is called the \emph{genus} of the surface. The genus of the sphere is zero. Together, genus, orientability, and number of boundary components completely classify connected surfaces with boundary.

Surfaces can be thought of as spheres with $n$ handles. Here a \emph{handle} is an annulus $S^1\times I$, where $S^1$ is a circle and $I$ is the unit interval. By adding a handle to a surface $\Sigma$, we mean that we remove the interiors of two disjoint discs from $\Sigma$, and identify each resulting boundary component  with a distinct boundary component of $S^1\times I$.  Adding a handle to $\Sigma$ yields its connected sum with either a torus or a Klein bottle, depending upon how the handle is attached. The inverse process is removing a handle.

\subsection{Graphs on surfaces and their generalizations}

\begin{definition}
A \emph{graph $G$ on a surface} $\Sigma$ consists of a set $V$ of points on $\Sigma$ and another set $E$ of simple paths joining these points and only intersecting each other at the points.

We say that $G\subset\Sigma$ is an \emph{embedding} of the abstract graph $G=(V,E)$, whose incidence relation comes from the paths in the obvious way.
\end{definition}

\begin{definition}
The graph $G\subset\Sigma$ is \emph{cellularly embedded} if $\Sigma\setminus G$ consists of discs.
\end{definition}

\begin{samepage}
\begin{definition}
Two embedded graphs $G\subset\Sigma$ and $G'\subset\Sigma'$ are \emph{equivalent} if there is a homeomorphism from $\Sigma$ to $\Sigma'$ inducing an isomorphism between $G$ and $G'$. When the surfaces are orientable this homeomorphism should be orientation preserving. 
\end{definition}
\end{samepage}

We will consider all graphical objects up to equivalence. (Note that a given graph will in general have many inequivalent embeddings.)

Topological graph theory is mostly (but not exclusively) concerned with cellularly embedded graphs. However, we will see that we  have to relax this restriction when we consider deletion and contraction.

Let $G$ be a graph cellularly embedded in the surface $\Sigma$, with $e\in E(G)$. We want to define $(G\subset\Sigma)\ba e$ in the natural way to be the result of removing the path $e$ from the graph (but leaving the surface unchanged). The difficulty is that it may result in a graph which is not cellularly embedded (compare Figures \ref{del.a1}, \ref{del.b1} and \ref{del.a2}, \ref{del.b2}).

There is a choice:
\begin{enumerate}[label={D\arabic*}]
    \item abandon the cellular embedding condition, or
    \item `stabilize' by removing the redundant handle as in Figure~\ref{del2}. (We make this term precise later in Definition~\ref{hfdjskfbu}.)
\end{enumerate}

Contraction leads to another dichotomy. We want to define $(G\subset\Sigma)/e$ as the image of $G\subset\Sigma$ under the formation of the topological quotient $\Sigma/e$ which identifies the path $e$ to a point, this point being a new vertex. If we start with a graph embedded (cellularly or not) in a surface then sometimes we obtain another graph embedded in a surface (as in  Figures~\ref{del.a1} and \ref{cont.b1}), but sometimes pinch points are created (as in Figures~\ref{cont.a2} and~\ref{cont.b2}).

Again we have a choice:
\begin{enumerate}[label={C\arabic*}]
    \item allow pinch points and work with pseudo-surfaces, or
    \item `resolve' pinch points as in Figure~\ref{cont.c2}. (We make this term precise below.)
\end{enumerate}

Here a \emph{pseudo-surface} is the result of taking topological quotients by a finite number of paths in a surface. A pseudo-surface may have \emph{pinch points}, i.e., those having neighbourhoods not homeomorphic to discs. If there are no pinch points then the pseudo-surface is a surface. A \emph{graph on a pseudo-surface} is the result of taking topological quotients by a finite number of edge-paths, starting with a graph on a surface. Note that pinch points are always vertices of such a graph.

By \emph{resolving a pinch point} we mean the result of the following process. Delete a small neighbourhood of the pinch point. This creates a number of boundary components. Next, by forming the topological quotient space, shrink each boundary component to a point and make this point a vertex. Note that resolving a pinch point in a graph embedded in a pseudo-surface results in another graph embedded in a pseudo-surface, but these may have different underlying abstract graphs.

\begin{definition}
Let $G$ be a graph embedded in a pseudo-surface $\check{\Sigma}$. Its \emph{regions} are subsets of the pseudo-surface corresponding to the components of its complement, $\check{\Sigma} \ba G$. If all of the regions are homeomorphic to discs we say that the graph is \emph{cellularly embedded} in a pseudo-surface and call its regions \emph{faces}. This terminology applies to graphs in surfaces since a surface is a special type of pseudo-surface.
\end{definition}

\begin{definition}
Two graphs embedded in pseudo-surfaces are \emph{equivalent} if there is a homeomorphism from one pseudo-surface to the other inducing an isomorphism between the graphs. When the pseudo-surfaces are orientable this homeomorphism should be orientation preserving.
\end{definition}

\begin{figure}[ht]
\centering
\subfigure[$G\subset\Sigma$]{
\labellist
\small\hair 2pt
\pinlabel $e$ at 66 50
\endlabellist
\includegraphics[width=40mm]{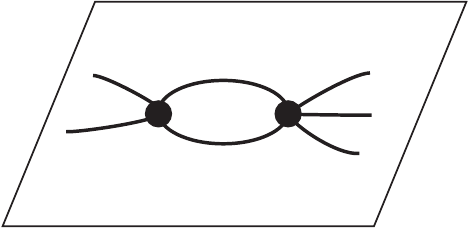}
\label{del.a1}
}
\hspace{1cm}
\subfigure[$G\ba e\subset\Sigma$]{
\includegraphics[width=40mm]{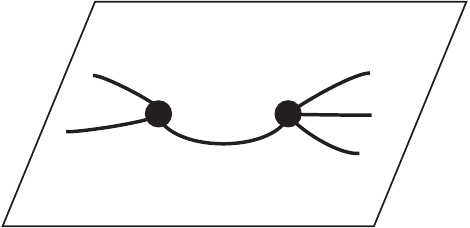}
\label{del.b1}
}
\hspace{1cm}
\subfigure[$G/e\subset\Sigma/e$]{
\includegraphics[width=40mm]{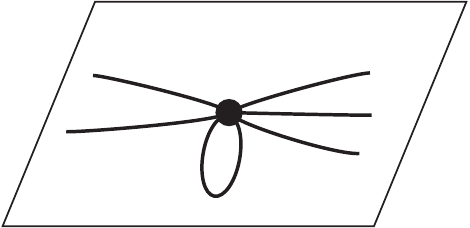}
\label{cont.b1}
}
\caption{Deletion and contraction preserving cellular embedding}
\label{del1}
\end{figure}

\begin{figure}[ht]
\centering
\subfigure[$G\subset\Sigma$]{
\labellist
\small\hair 2pt
\pinlabel $e$ at 35 28
\endlabellist
\includegraphics[width=40mm]{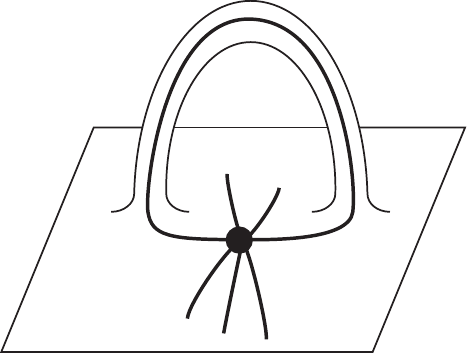}
\label{del.a2}
}
\hspace{1cm}
\subfigure[$G\ba e\subset\Sigma$]{
\includegraphics[width=40mm]{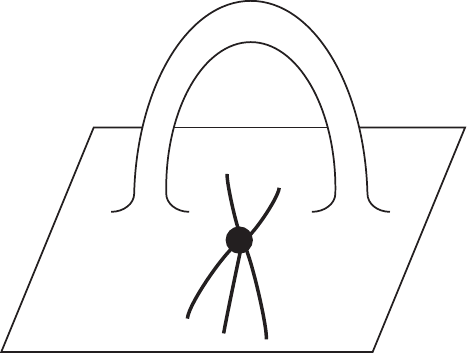}
\label{del.b2}
}
\hspace{1cm}
\subfigure[Removing the handle]{
\includegraphics[width=40mm]{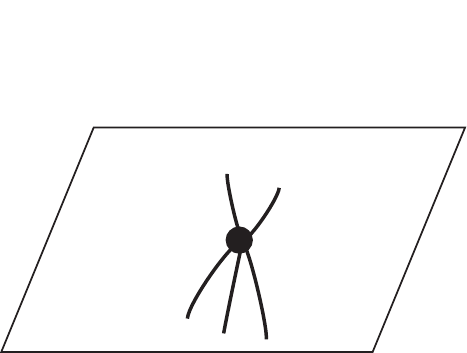}
\label{del.c2}
}
\caption{Deletion leaving a redundant handle}
\label{del2}
\end{figure}

\begin{figure}[ht]
\centering
\subfigure[$G\subset\Sigma$]{
\labellist
\small\hair 2pt
\pinlabel $e$ at 107 27
\endlabellist
\includegraphics[height=16mm]{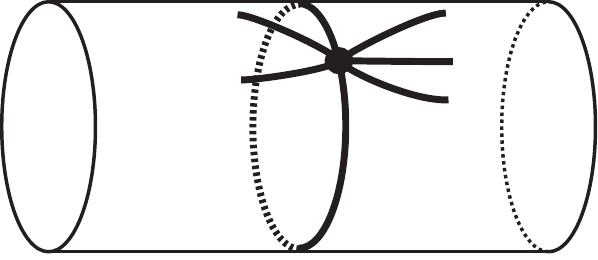}
\label{cont.a2}
}
\hspace{1cm}
\subfigure[$G/e\subset\Sigma/e$]{
\includegraphics[height=16mm]{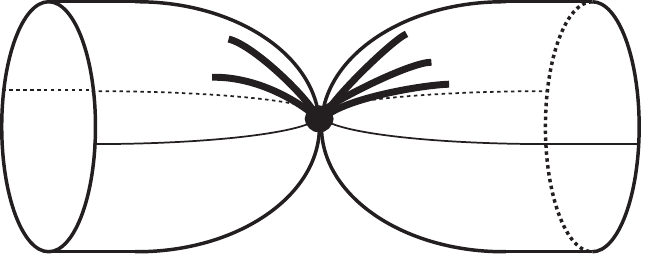}
\label{cont.b2}
}
\hspace{1cm}
\subfigure[Resolving the pinch point]{
\includegraphics[height=16mm]{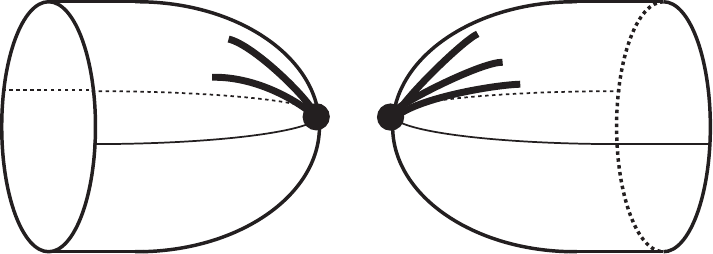}
\label{cont.c2}
}
\caption{Contraction making a pseudo-surface}
\label{cont2}
\end{figure}

Returning to the problem of constructing Tutte polynomials via deletion and contraction,  we see that in order to define a `Tutte polynomial of graphs on surfaces' we are immediately forced by \ref{t1} and \ref{t2} to consider four classes of objects, as follows:

\bigskip
\begin{center}
\begin{tabular}{l|l|l}
 & D1 & D2 \\ \hline
 C1 & graphs embedded in pseudo-surfaces & graphs cellularly embedded \\
    & (need not be cellular) & in pseudo-surfaces \\ \hline
 C2 & graphs embedded in surfaces & graphs cellularly embedded \\
    & (need not be cellular) & on surfaces\\
\end{tabular}
\end{center}
\bigskip

Having established these four cases, it is now convenient for our purposes to switch to the formalism of ribbon graphs.

\subsection{Ribbon graphs}\label{s.4}

A {\em ribbon graph} $\G=\left(V(\G),E(\G)\right)$ is a surface with boundary, represented as the union of two sets of discs---a set $V(\G)$ of {\em vertices} and a set $E(\G)$ of {\em edges}---such that: (1) the vertices and edges intersect in disjoint line segments; (2) each such line segment lies on the boundary of precisely one vertex and precisely one edge; and (3) every edge contains exactly two such line segments. 

We let $F(\G)$ denote the set of boundary components of a ribbon graph $\G$. 

Two ribbon graphs $\G$ and $\G'$ are \emph{equivalent} is there is a homeomorphism from $\G$ to $\G'$ (orientation preserving when $\G$ is orientable) mapping $V(\G)$ to $V(\G')$ and $E(\G)$ to $E(\G')$. In particular, the homeomorphism preserves the cyclic order of half-edges at each vertex.

Let $\G$ be a ribbon graph and $e\in E(\G)$. Then $\G\ba e$ denotes the ribbon graph obtained from $\G$ by \emph{deleting} the edge $e$.
If $u$ and $v$ are the (not necessarily distinct) vertices incident with $e$, then $\G/e$ denotes the ribbon graph obtained as follows: consider the boundary component(s) of $e\cup\{u,v\}$ as curves on $\G$. For each resulting curve, attach a disc (which will form a vertex of $\G/e$) by identifying its boundary component with the curve. Delete $e$, $u$ and $v$ from the resulting complex, to get the ribbon graph $\G/e$. We say $\G/e$ is obtained from $\G$ by {\em contracting} $e$. See Figure~\ref{tablecontractrg} for the local effect of contracting an edge of a ribbon graph.

\begin{table}[ht]
\centering
\begin{tabular}{|c||c|c|c|}\hline
& non-loop & non-orientable loop & orientable loop \\ \hline
\raisebox{6mm}{$\G$} 
&\includegraphics[scale=.25]{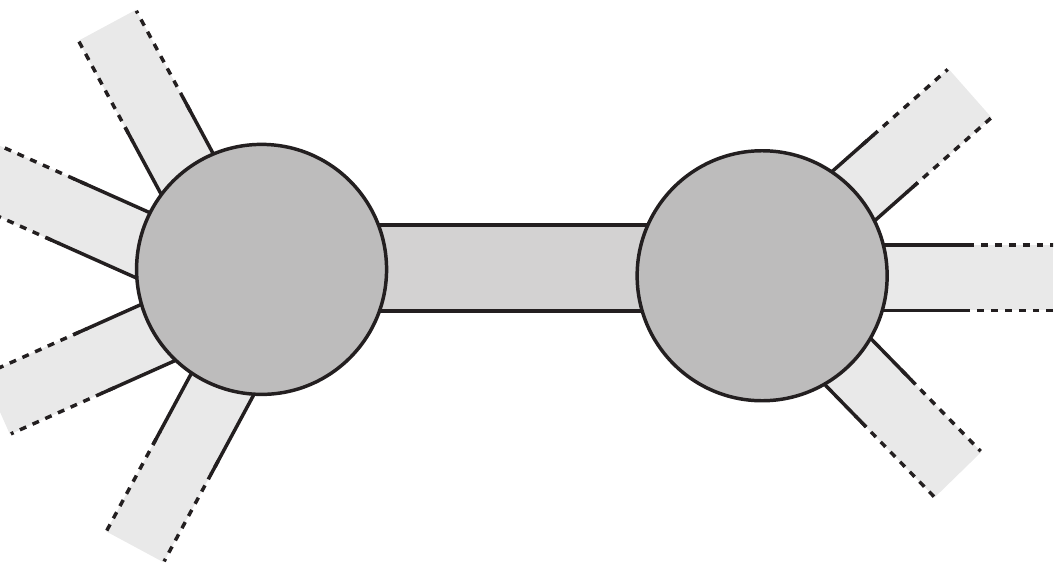} &\includegraphics[scale=.25]{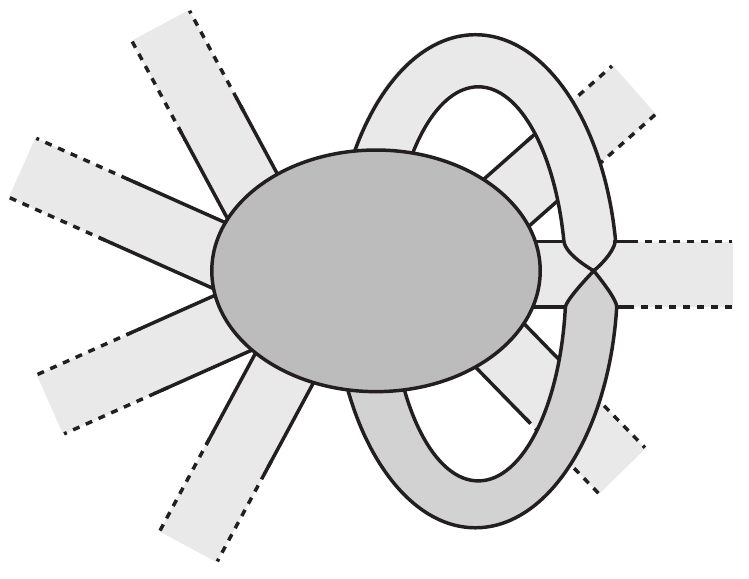} &\includegraphics[scale=.25]{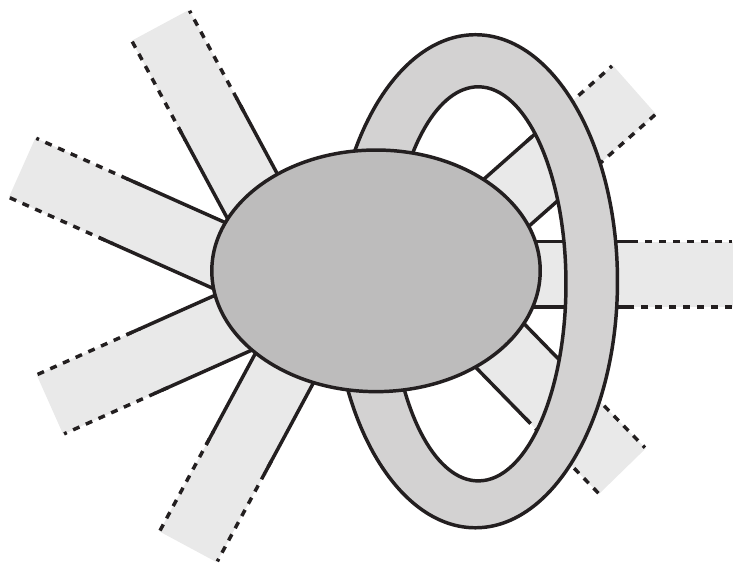}
\\ \hline
\raisebox{6mm}{$\G/e$} 
&\includegraphics[scale=.25]{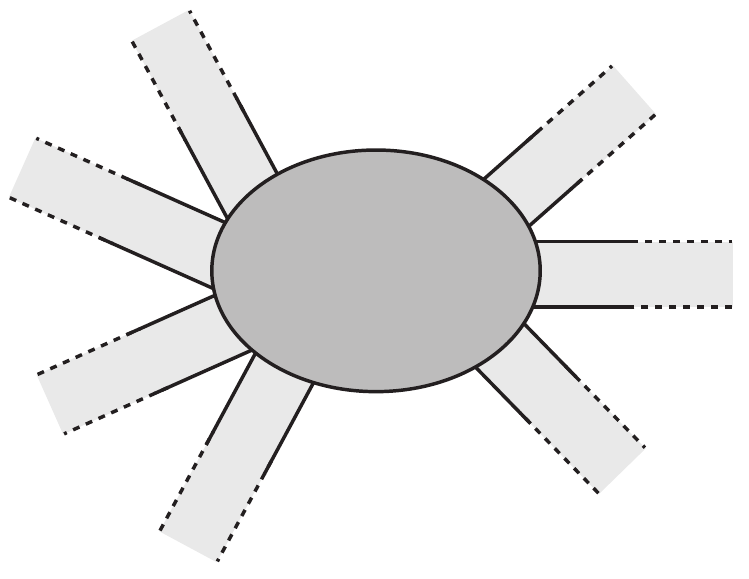} &\includegraphics[scale=.25]{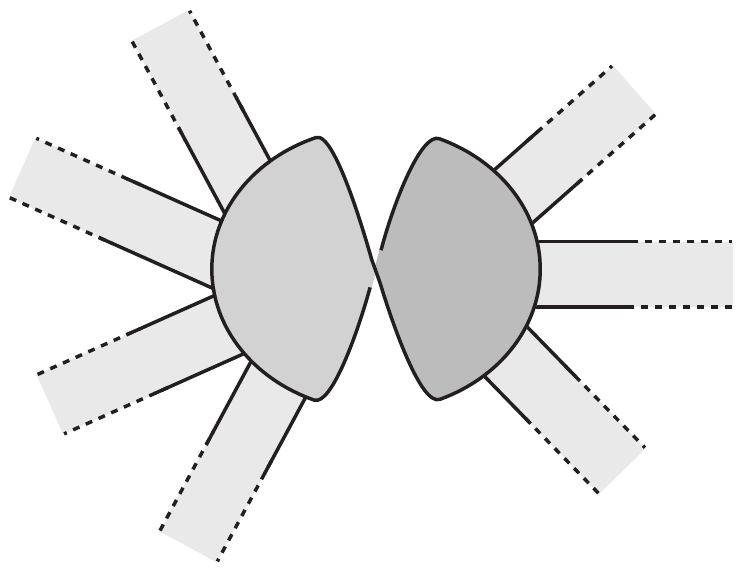}&\includegraphics[scale=.25]{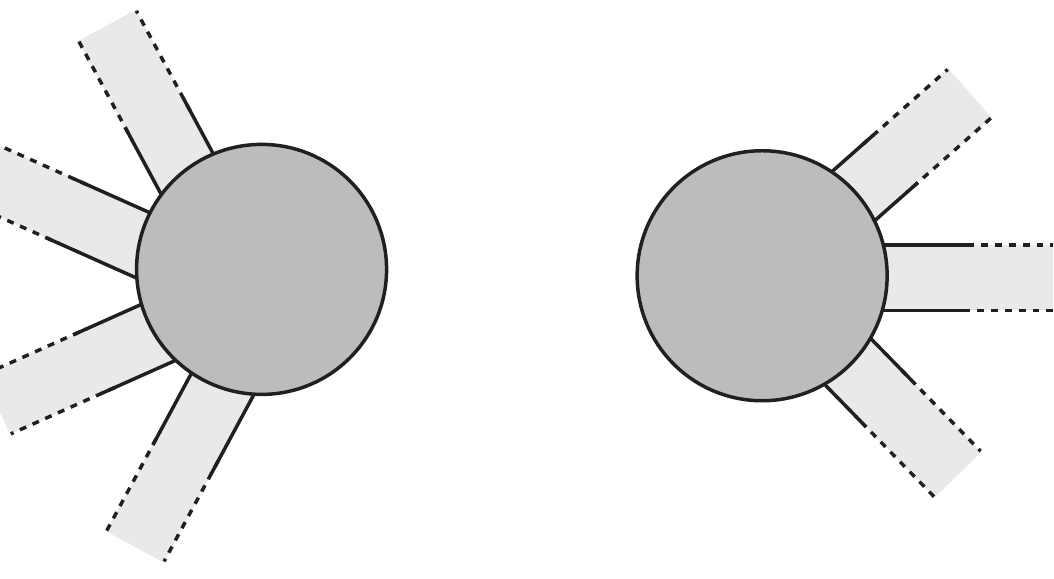} \\ \hline
\end{tabular}
\caption{Contracting an edge of a ribbon graph.}
\label{tablecontractrg}
\end{table}

\begin{definition}
A \emph{vertex colouring} of a ribbon graph $\G$ is a mapping from $V(\G)$ to a colouring set. Equivalently, it is a partition of $V(\G)$ into \emph{colour classes}. The colour class of the vertex $v$ is denoted $[v]_{\G}$.
\end{definition}

\begin{definition}
Two vertex-coloured ribbon graphs $\G$ and $\G'$ are \emph{equivalent} if they are equivalent as ribbon graphs, with the mapping $V(\G)\rightarrow V(\G')$ preserving (vertex) colour classes.
\end{definition}

Now we define deletion and contraction for vertex-coloured ribbon graphs. In fact, deletion is clear. For contraction, of an edge $e$, suppose that $e=(u,v)$ with colour classes $[u]_{\G},[v]_{\G}$. We obtain the ribbon graph $\G/e$, with colour classes determined as follows.
\begin{itemize}
    \item If the contraction does not change the number of vertices and creates a vertex $p$ (in which case $u=v$), then   
    $$[p]_{\G/e}=[u]_{\G}\cup\{p\}\setminus\{u\}= [v]_{\G}\cup\{p\}\setminus\{v\}.$$
    \item If the contraction merges $u$ and $v$ into a single vertex $p$, then
    $$[p]_{\G/e}=[u]_{\G}\cup [v]_{\G}\cup\{p\}\setminus\{u,v\}.$$
    \item If the contraction creates vertices $p,q$ (in which case $u=v$), then
    \begin{eqnarray*}[p]_{\G/e}=[q]_{\G/e}&=&[u]_{\G}\cup\{p,q\}\setminus\{u\} \\
    &=&[v]_{\G}\cup\{p,q\}\setminus\{v\}.
    \end{eqnarray*}
\end{itemize}
The local effect of contraction on vertex colour classes is shown in Table~\ref{tablevp}. 
 Note that in the case in which  contraction merges two colour classes, the effect on the graph is global in the sense that 
 all vertices in those two colour classes now belong  to a single colour class.

\begin{table}[ht]
\centering
\begin{tabular}{|c||c|c|c|}\hline
\raisebox{6mm}{$\G$} 
&
\includegraphics[scale=.25]{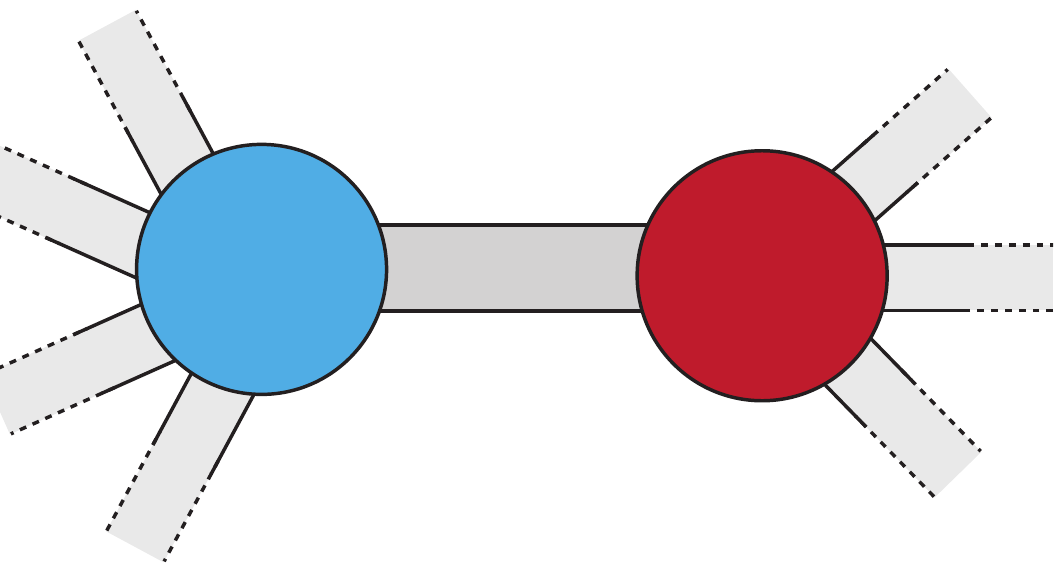} &
\includegraphics[scale=.25]{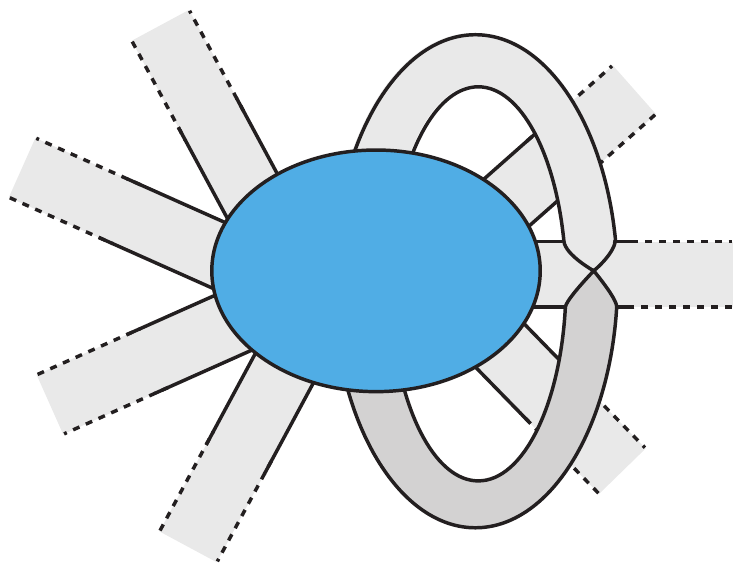} &
\includegraphics[scale=.25]{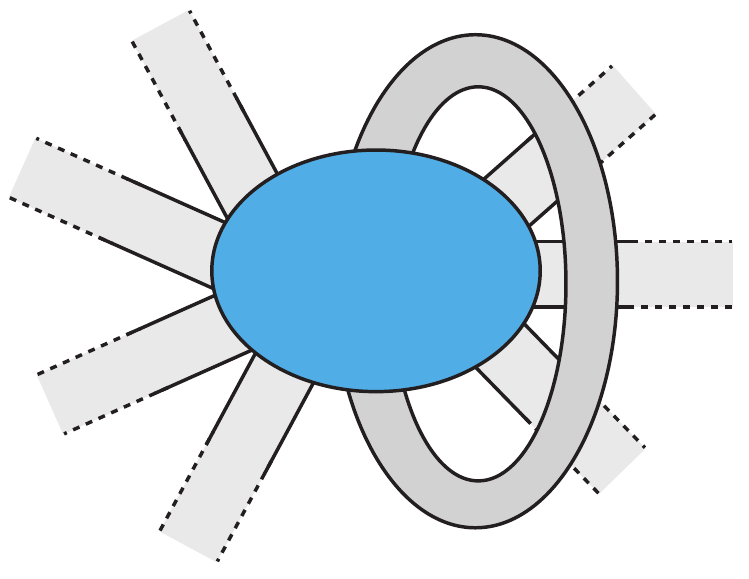}
\\ \hline
\raisebox{6mm}{$\G/e$} 
&\includegraphics[scale=.25]{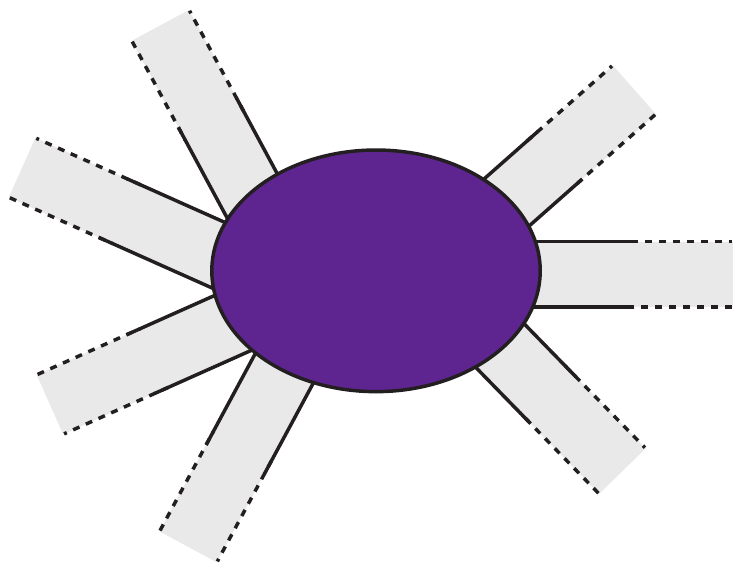} &
\includegraphics[scale=.25]{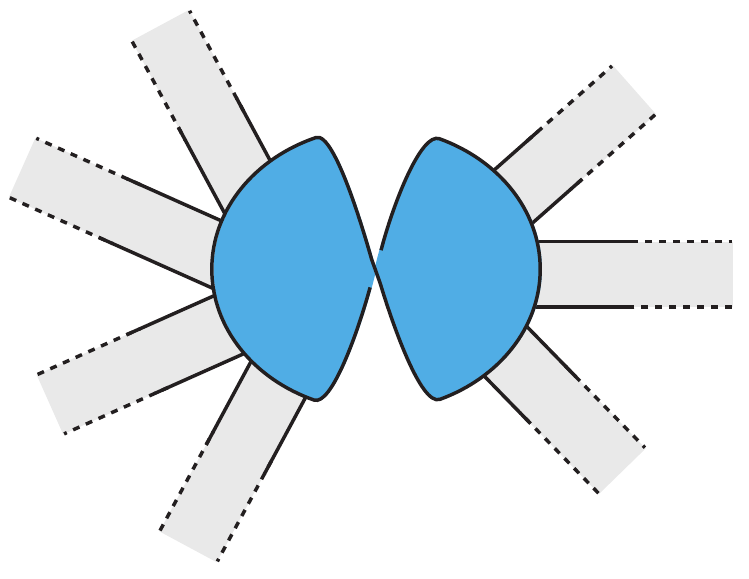}&
\includegraphics[scale=.25]{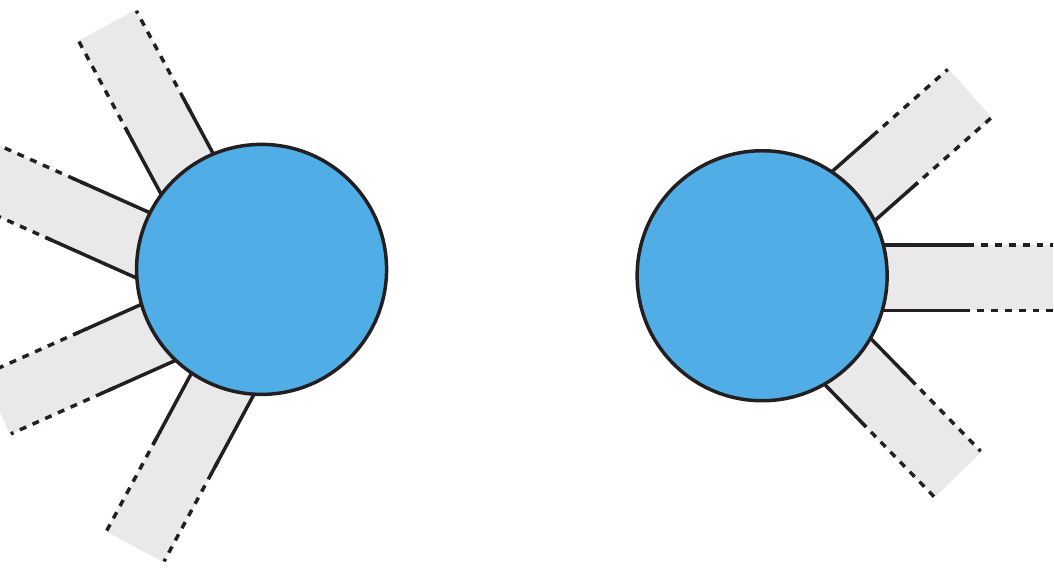} \\ \hline
\end{tabular}
\caption{Contracting an edge in a vertex-coloured ribbon graph}
\label{tablevp}
\end{table}

\begin{definition}
A \emph{boundary colouring} of a ribbon graph $\G$ is a mapping from $F(\G)$, the set of boundary components, to a colouring set. Equivalently, it is a partition of $F(\G)$ into \emph{colour classes}. The colour class of the boundary component $b$ is denoted $[b]^*_{\G}$.
\end{definition}

\begin{definition}
Two boundary-coloured ribbon graphs $\G$ and $\G'$ are \emph{equivalent} if they are {equivalent} as ribbon graphs, with the induced mapping $F(\G)\rightarrow F(\G')$ preserving (boundary) colour classes.
\end{definition}

Now we define deletion and contraction for boundary-coloured ribbon graphs. This time, contraction is clear, since it does not change the number of boundary components of a ribbon graph (as can be seen from Table~\ref{tablecontractrg}).

For deletion, of an edge $e$, suppose that $a$ and $b$ are the boundary components touching $e$ with colour classes $[a]^*_{\G},[b]^*_{\G}$. We obtain the ribbon graph $\G/e$, with (boundary) colour classes determined as follows.
\begin{itemize}
    \item If the deletion does not change the number of boundary components and creates a boundary component $r$ (in which case $a=b$), then   
    $$[r]^*_{\G\ba e}=[a]^*_{\G}\cup\{r\}\setminus\{a\} = [b]^*_{\G}\cup\{r\}\setminus\{b\} .$$
    \item If the deletion merges $a$ and $b$ into a single boundary component $r$, then
    $$[r]^*_{\G\ba e}=[a]^*_{\G}\cup [b]^*_{\G}\cup\{r\}\setminus\{a,b\}.$$
    \item If the deletion creates boundary components $r$ and $s$ (in which case $a=b$), then 
    \begin{eqnarray*}[r]^*_{\G\ba e}=[s]^*_{\G\ba e}&=&[a]^*_{\G}\cup\{r,s\}\setminus\{a\} \\
    &=&[b]^*_{\G}\cup\{r,s\}\setminus\{b\}.
    \end{eqnarray*}
\end{itemize}
The local effect of deletion on boundary colour classes is shown in Table~\ref{tablefp}.
Note that in the case in which deletion merges two colour classes, the effect on the graph is global in the sense that 
 all boundary components in those two colour classes now belong to a single colour class.

\begin{table}[ht]
\centering
\begin{tabular}{|c||c|c|c|}\hline
\raisebox{6mm}{$\G$} 
&\includegraphics[scale=.25]{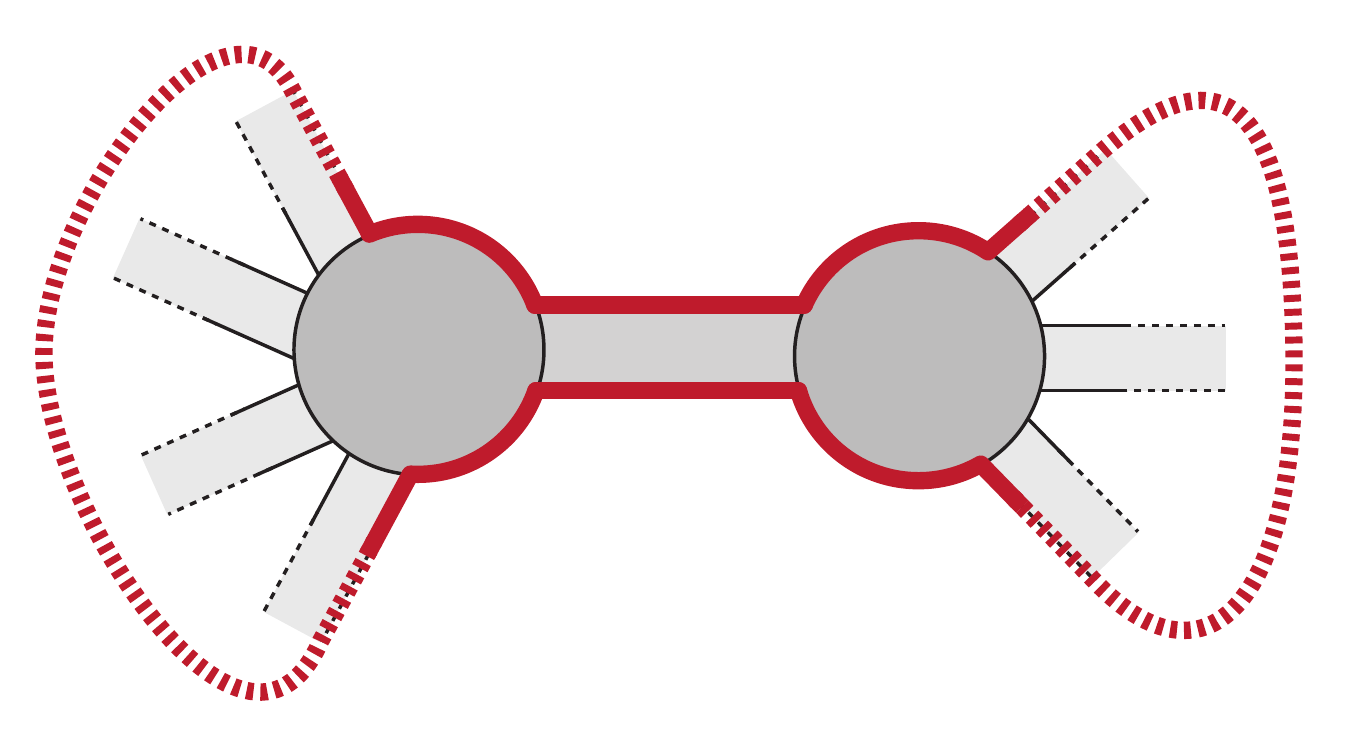} &\includegraphics[scale=.25]{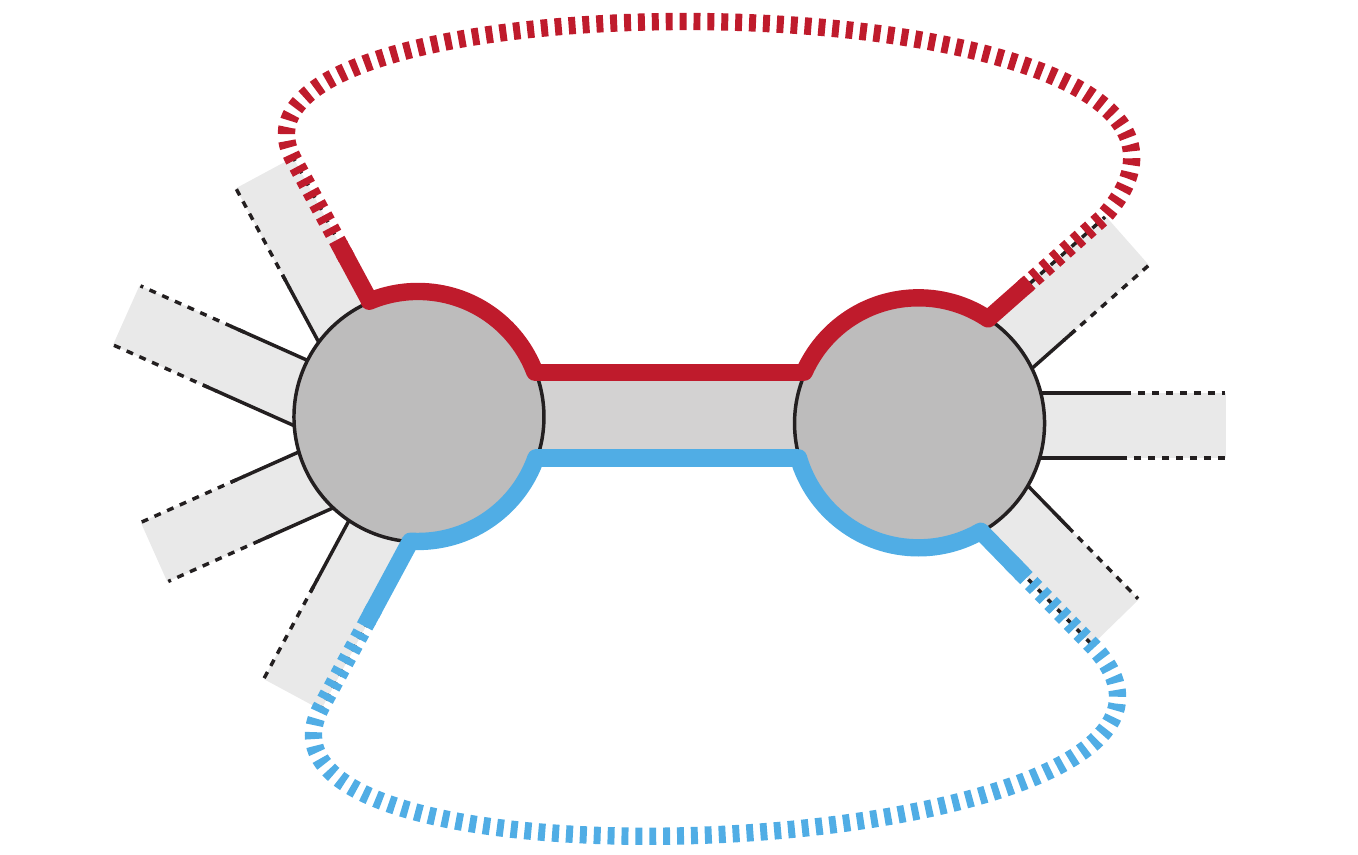} &\includegraphics[scale=.25]{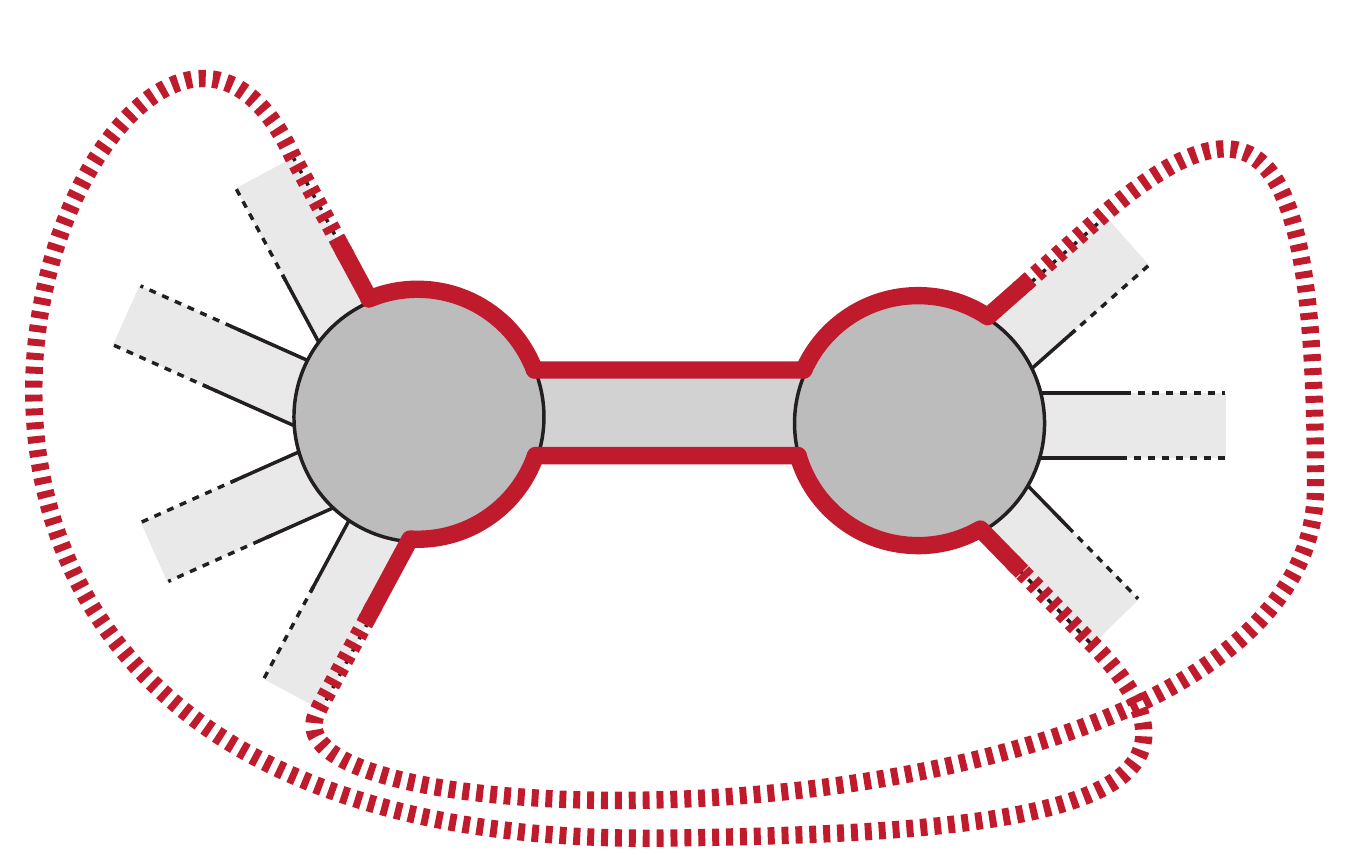}
\\ \hline
\raisebox{6mm}{$\G\ba e$} 
&\includegraphics[scale=.25]{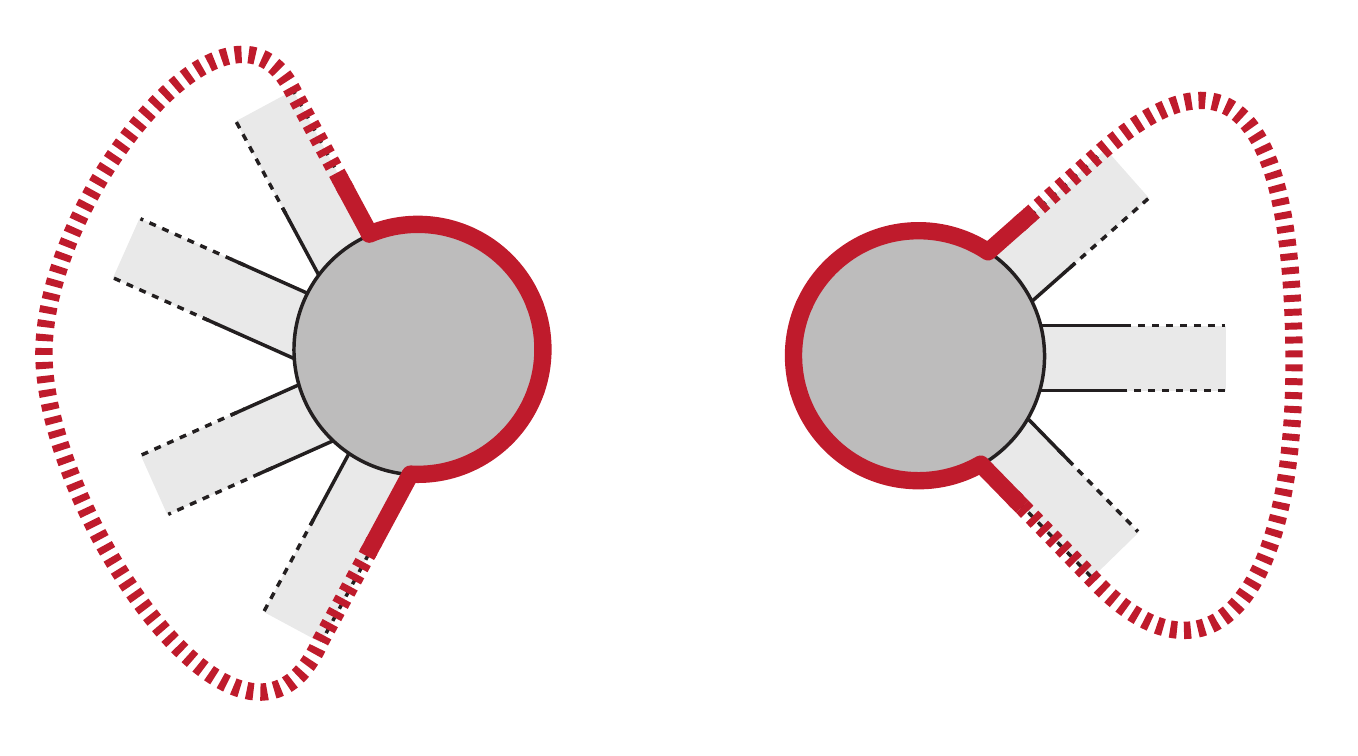} &\includegraphics[scale=.25]{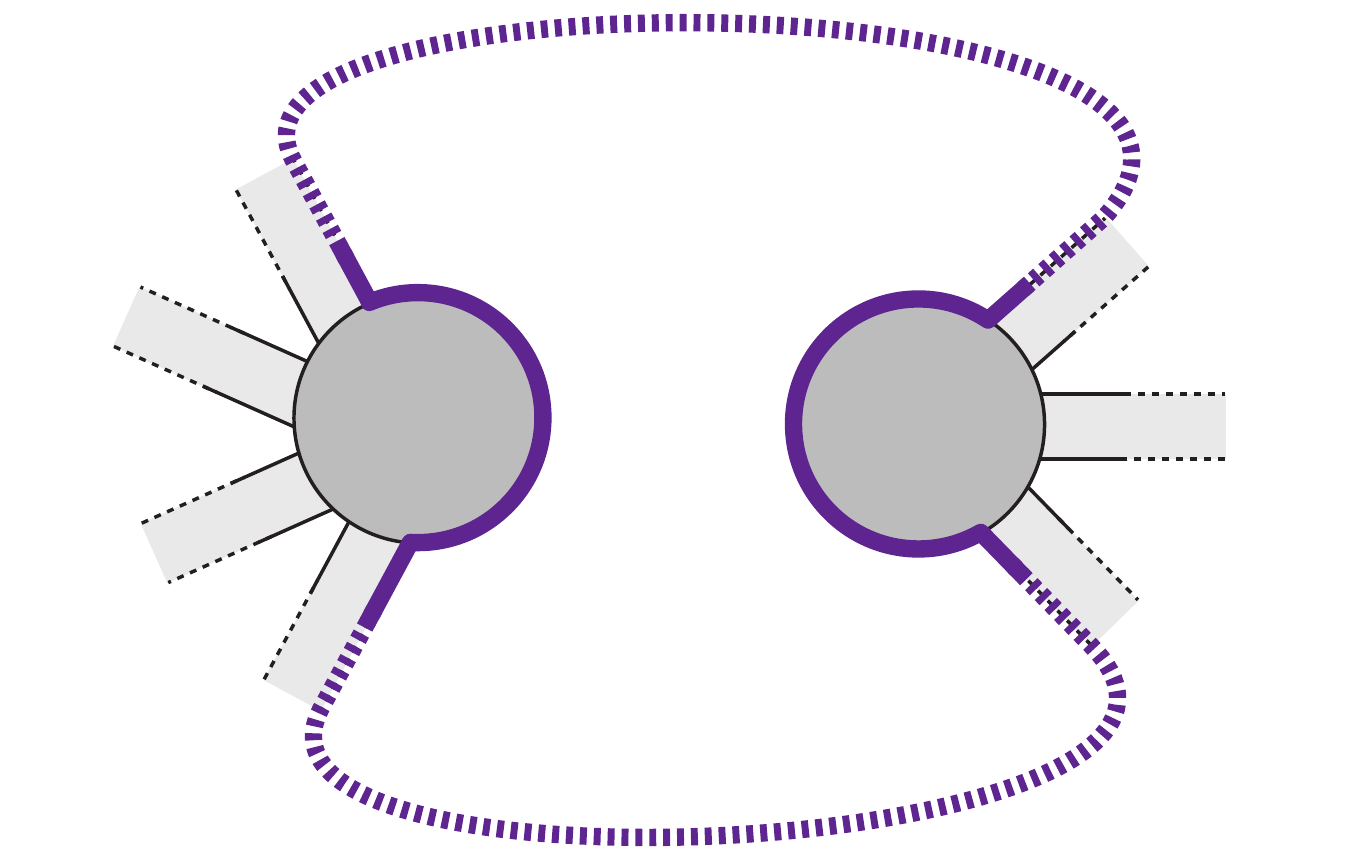}&\includegraphics[scale=.25]{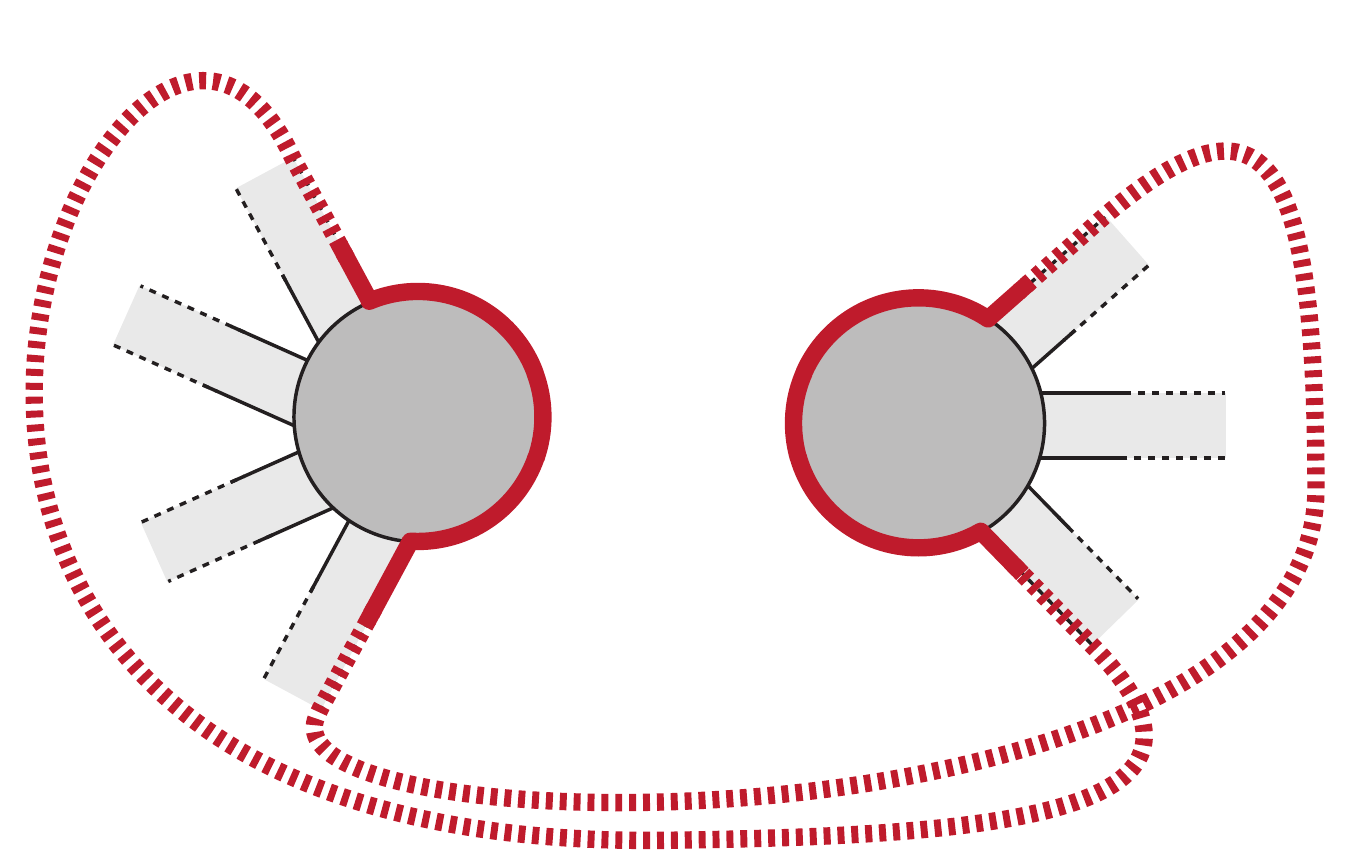} \\ \hline
\end{tabular}
\caption{Deleting an edge in a boundary-coloured ribbon graph.}
\label{tablefp}
\end{table}

\begin{definition}
A \emph{coloured  ribbon graph} $\G$ is a ribbon graph that is simultaneously vertex coloured and boundary coloured. 
\end{definition}

\begin{definition}
Two coloured ribbon graphs are \emph{equivalent} if they are equivalent as both vertex coloured ribbon graph and boundary coloured ribbon graphs.
\end{definition}

\begin{definition}
If $\G$ is a  coloured ribbon graph with vertex colour classes $\{[v]_\G\}_{v\in V(\G)}$ and boundary colour classes $\{[b]^*_\G\}_{b\in F(\G)}$ then, for an edge $e$ of $\G$:
\begin{enumerate}
    \item $\,\G$ with $e$ \emph{deleted}, written $\G\ba e$, is the ribbon graph $\G\ba e$ with vertex colour classes $\{[v]_{\G\ba e}\}_{v\in V(\G\ba e)}$ and boundary colour classes $\{[b]^*_{\G \ba e}\}_{b\in F(\G\ba e)}$; and
    \item $\,\G$ with $e$ \emph{contracted}, written $\G/ e$, is the ribbon graph $\G/e$ with vertex colour classes $\{[v]_{\G/e}\}_{v\in V(\G/e)}$ and boundary colour classes $\{[b]^*_{\G/e}\}_{b\in F(\G/e)}$.
    \end{enumerate}
\end{definition}

It is well-known that ribbon graphs are equivalent to cellularly embedded graphs in surfaces. (Ribbon graphs arise naturally from neighbourhoods of cellularly embedded graphs. On the other hand, topologically a ribbon graph is a  surface with boundary, and capping the holes gives rise to a cellularly embedded graph in the obvious way. See \cite{EMMbook,GT87} for details.)

A graph $G$ embedded in a pseudo-surface $\check{\Sigma}$ gives rise to a unique coloured ribbon graph as follows. Firstly, resolve all the pinch points to obtain a graph embedded on a surface. Then take a neighbourhood of this graph in the surface to obtain a ribbon graph. For the colour classes, go back to $G$ and assign a distinct colour to each vertex and a distinct colour to each region. Then given two vertices $u,v$ in the ribbon graph, $[u]=[v]$ if and only if $u$ and $v$ arose from the same pinch point in $\check{\Sigma}$. Finally, the boundary colours in the ribbon graph come from the colouring of the regions in the embedding of $G$ in $\check{\Sigma}$.

\begin{figure}[ht]
\centering
\subfigure[A graph in a pseudo-surface]{
\includegraphics[height=40mm]{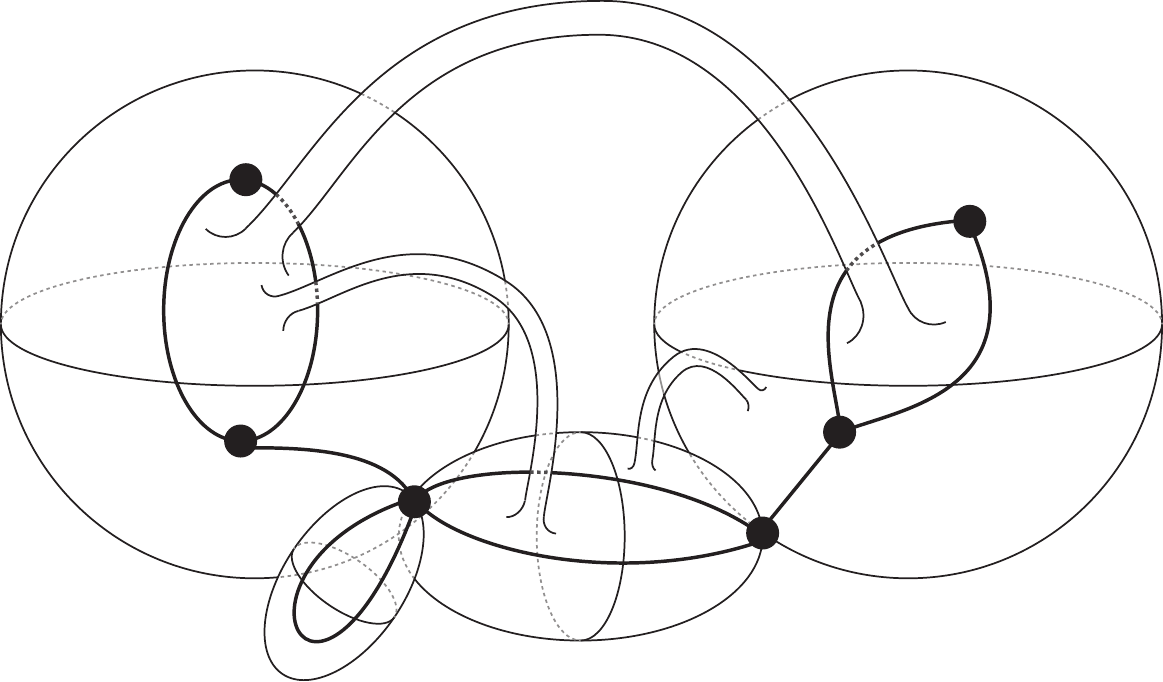}
}
\hspace{10mm}
\subfigure[The corresponding coloured ribbon graph]{
\includegraphics[height=40mm]{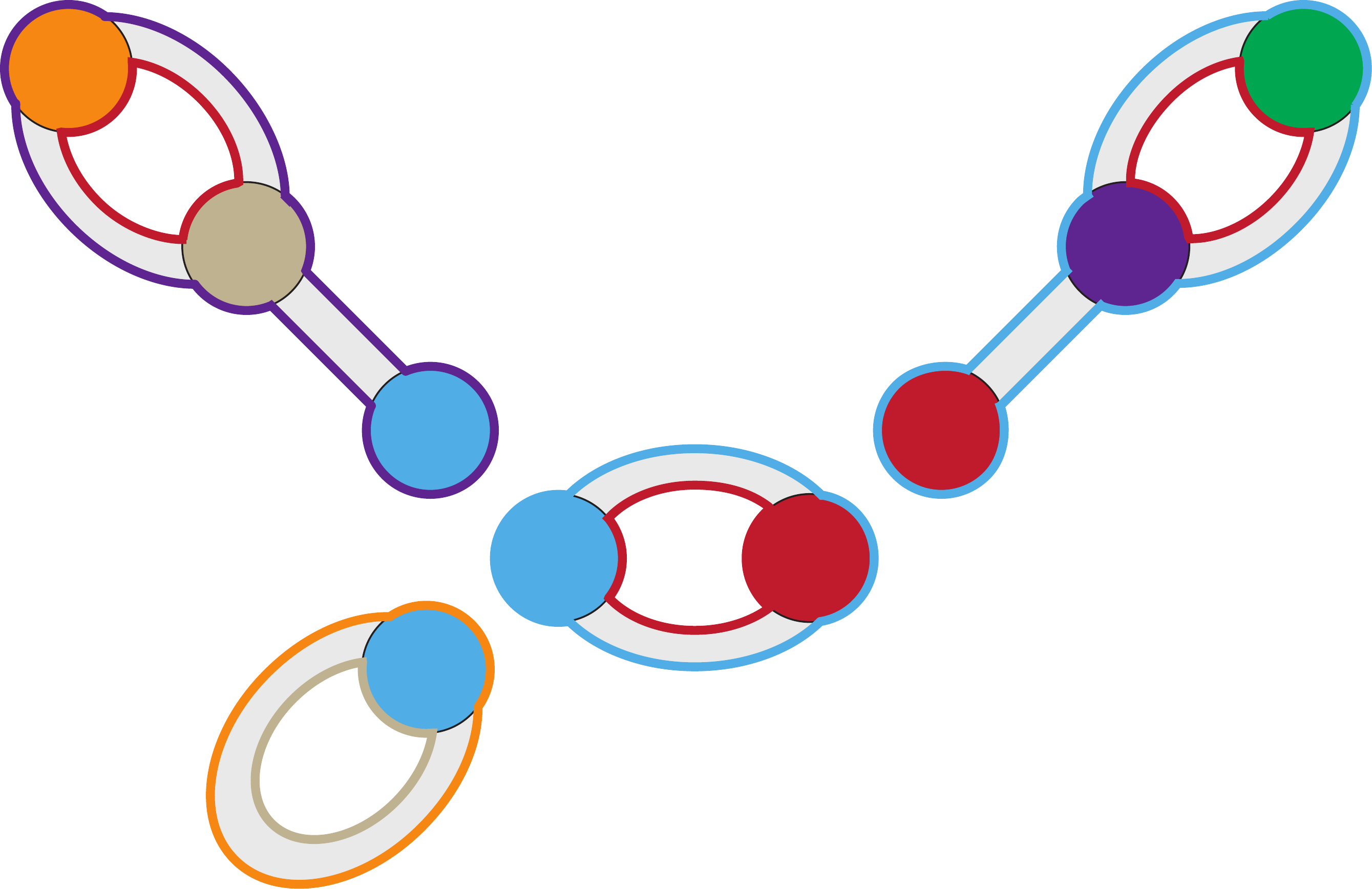}
}
\caption{Moving from a graph in a pseudo-surface to a coloured ribbon graph}
\label{ps2rg}
\end{figure}

On the other hand, given a coloured ribbon graph we can recover a graph embedded on a pseudo-surface as follows. The ribbon graph can be thought of as a graph cellularly embedded on a surface, as usual. The ribbon graph's vertex and boundary colourings give colourings of the vertices and faces of this cellularly embedded graph. Now identify vertices in the same colour class, to obtain a pseudo-surface, and add exactly one handle between each pair of faces of the same colour, thus spoiling the cellular embedding.  
(We note that Theorem~\ref{dh} below will allow us some flexibility in the exact way that handles are added. All that will matter is that handles are added in a way that merges all faces in the same colour class into one.)

\begin{figure}[ht]
\centering
\subfigure[A coloured ribbon graph, $\G$]{
\includegraphics[width=40mm]{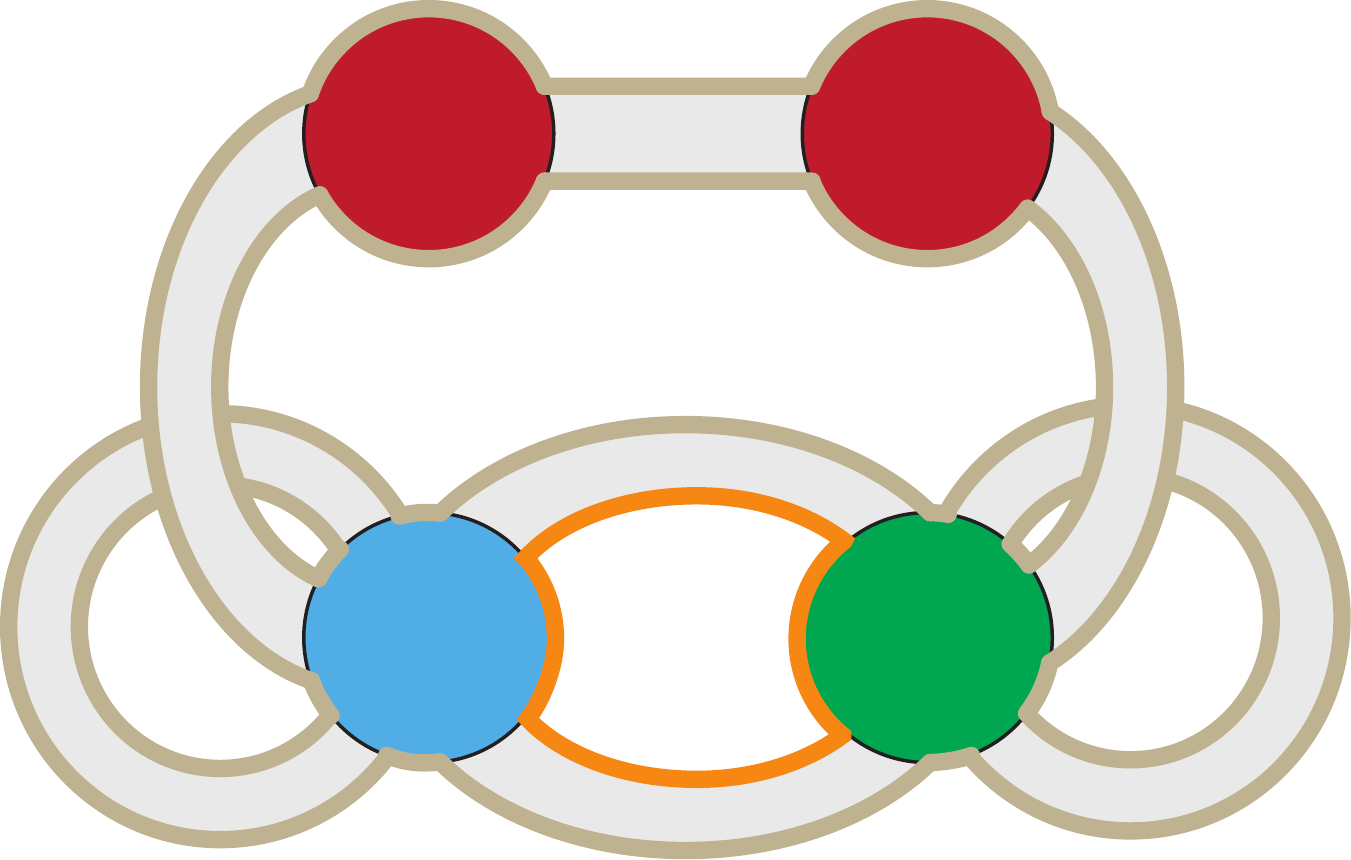}
}
\hspace{1cm}
\subfigure[$\G$ as a face-coloured cellularly embedded graph]{
\includegraphics[width=40mm]{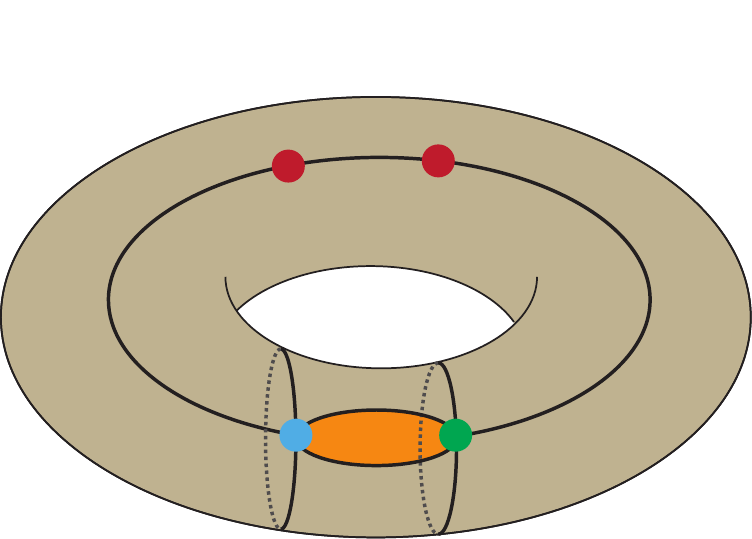}
}
\hspace{1cm}
\subfigure[A corresponding graph in a pseudo-surface]{
\includegraphics[width=40mm]{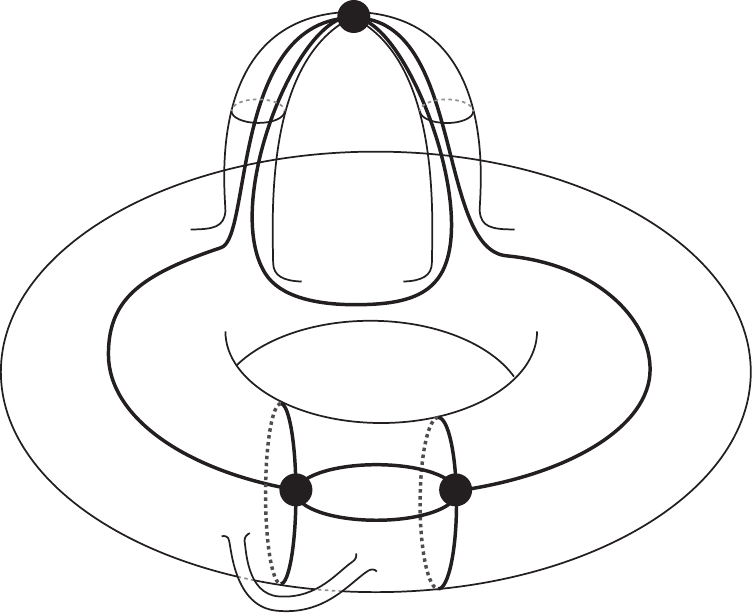}
}
\caption{Moving from a coloured ribbon graph to a graph in a pseudo-surface}
\label{rg2ps}
\end{figure}

Note that this relation between coloured ribbon graphs and graphs embedded on pseudo-surfaces is not a bijection. For example, a single vertex in a torus and a single vertex in the sphere are represented by the same coloured ribbon graph.

\begin{definition}\label{hfdjskfbu}
We say that two graphs embedded in a pseudo-surface (or surface) are related by \emph{stabilization} if one can be obtained from the other by a finite sequence of removal and addition of handles which does not disconnect any region or coalesce any two regions, and any discs or annuli involved in adding or removing handles are disjoint from the graph.
\end{definition}

\begin{theorem}\label{dh}
Two graphs embedded in pseudo-surfaces correspond to the same coloured ribbon graph if and only if they are related by stabilization.
\end{theorem}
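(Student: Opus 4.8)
I would prove the two implications separately; the forward direction (stabilization $\Rightarrow$ same coloured ribbon graph) is routine, while the reverse direction is carried by a normal-form argument.

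For the forward direction, note first that stabilization is an equivalence relation, generated by single handle additions and removals (any addition being undone by a removal). So it suffices to check that one such move, applied to $G\subset\check\Sigma$ subject to the conditions of Definition~\ref{hfdjskfbu}, leaves the associated coloured ribbon graph unchanged. Since the discs and annulus involved are disjoint from $G$, the move is supported away from $G$, hence away from its pinch points (which are vertices of $G$); thus resolving pinch points commutes with the move and the resulting neighbourhood ribbon graph is literally unchanged, as is the vertex colouring, which records only which vertices of the resolved graph arise from a common pinch point. Finally, since the move neither disconnects nor coalesces any region, it induces a bijection on regions under which each region borders the same boundary components of the ribbon graph, so the boundary colouring is preserved too.

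For the reverse direction, suppose $G_1\subset\check\Sigma_1$ and $G_2\subset\check\Sigma_2$ have the same coloured ribbon graph $\G$. The plan is to reduce each $G_i\subset\check\Sigma_i$ by stabilization to a normal form and then show the two normal forms are equivalent; as equivalent embeddings are trivially related by stabilization, this gives the result. For the reduction: each region of $G_i\subset\check\Sigma_i$ is a compact surface with boundary meeting $G_i$ only along its boundary, so by repeatedly removing handles carried by non-separating curves in region interiors---a legitimate stabilization, since it neither disconnects nor coalesces regions---we may assume every region is a sphere with holes; call the result $G_i'\subset\check\Sigma_i'$. I claim $G_i'\subset\check\Sigma_i'$ is determined up to equivalence by $\G$. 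Resolving its pinch points yields a graph $H_i$ on a surface $\Sigma_i$ with $N(H_i)=\G$ as ribbon graphs, and $G_i'$ is recovered from $H_i$ by re-identifying vertices according to the vertex colouring of $\G$; so it suffices to pin down $H_i\subset\Sigma_i$. Its neighbourhood is $\G$; its faces are planar; and since distinct regions of $G_i'$ receive distinct colours, the faces of $H_i$ are in bijection with the boundary colour classes of $\G$, a class of size $m$ corresponding to a face bounded by exactly those $m$ boundary components of $\G$. Since gluing a sphere-with-$m$-holes onto $m$ prescribed circles produces a space well-defined up to homeomorphism---the residual freedom, a permutation of the holes together with twists along the gluing curves, being realised by self-homeomorphisms of the planar piece and of adjacent disc faces---the pair $H_i\subset\Sigma_i$, hence $G_i'\subset\check\Sigma_i'$, is determined up to equivalence. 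Thus $G_1'\subset\check\Sigma_1'$ and $G_2'\subset\check\Sigma_2'$ are equivalent, and chaining with the reductions completes the proof.

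The main obstacle is the rigidity step: showing that the way the planar regions attach along the boundary components of $\G$ is forced up to homeomorphism (this is also the content of the flexibility remark preceding the theorem). Supporting technical points, used in both directions, are that resolving pinch points induces a bijection of regions preserving surface type and colours, and the careful treatment of the non-orientable cases---both when reducing regions to a planar normal form and when tracking orientations in the rigidity step.
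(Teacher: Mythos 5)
Your forward direction is essentially the paper's own argument (stabilization is supported away from $G$ and its pinch points, and preserves the region/boundary-component incidence), just spelled out move-by-move; that part is fine. Your converse is a genuinely different route: where the paper disposes of it in one sentence by asserting that the only freedom in reconstructing an embedding from a coloured ribbon graph is the choice of connecting handles, you reduce each embedding to a normal form and then prove a rigidity statement for that normal form. In principle this is the more convincing decomposition, and the rigidity step you isolate (gluing a planar piece along $m$ prescribed circles is determined up to homeomorphism rel the graph) is exactly the content the paper leaves implicit.

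However, the normal-form reduction has a concrete gap. You claim that by removing handles carried by non-separating curves in region interiors one can make every region a sphere with holes. Removing a handle, as defined in the paper, means excising an embedded annulus and capping, which changes the Euler genus of the region by $2$; a one-sided non-separating curve has a M\"obius-band neighbourhood and carries no such annulus. Consequently a region of odd Euler genus can never be made planar this way: the reduction stalls at a projective plane with holes. This is not a removable technicality, because it is exactly where the statement itself is delicate: an isolated vertex in $S^2$ and an isolated vertex in $\mathbb{RP}^2$ determine the same coloured ribbon graph (a single disc with trivial colourings), yet they are not related by any sequence of handle additions and removals, since the parity of the Euler genus is invariant under such moves. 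So either Definition~\ref{hfdjskfbu} must be read as also permitting the addition and removal of cross-caps, or regions must be assumed orientable; under the definition as literally written your planar normal form does not exist and the two normal forms for $G_1$ and $G_2$ need not agree. (To be clear, the paper's own one-sentence converse has the same blind spot; but your proof makes the failure point explicit, so you need to address it rather than defer it to ``careful treatment of the non-orientable cases.'')

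Separately, the rigidity step is asserted rather than proved: you need that any two gluings of a sphere-with-$m$-holes onto the same $m$ boundary circles of $\G$ yield equivalent embedded graphs, and the residual freedom is not just a permutation of holes and Dehn twists but also an orientation reversal of an individual gluing circle, which does not obviously extend over the planar piece while fixing the others, and which interacts with the requirement that equivalences of orientable embeddings be orientation-preserving. This can be repaired (for instance by replacing the planar region with a tree of handles joining discs and showing any two such trees and attachment choices are stabilization-equivalent, which is closer to the paper's phrasing), but as written it is a second place where the argument is not yet complete.
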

\begin{proof}
Starting with two graphs embedded in pseudo-surfaces, related by stabilization, consider the stage in the formation of the two coloured ribbon graphs in which we consider  graphs embedded in surfaces. Since stabilization only removes or adds handles in such a way that regions are neither disconnected nor coalesced, the colour classes of the boundary components of the two ribbon graphs will be equivalent. It follows that the two ribbon graphs are equivalent as boundary coloured ribbon graphs. Since the pinch points are unchanged by stabilization, it then follows that the two coloured ribbon graphs are equivalent.

Conversely, the only choice in the construction of a graph in a pseudo-surface from a coloured ribbon graph is in how the faces of the cellularly embedded graph in a pseudo-surface in the same colour class are connected to each other by handles. This is preserved by stabilization. 
\end{proof}

\begin{corollary}\label{hd}
The set of coloured ribbon graphs is in 1-1 correspondence with the set of stabilization equivalence classes of graphs embedded in pseudo-surfaces.
\end{corollary}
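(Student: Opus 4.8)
The corollary is a repackaging of Theorem~\ref{dh} together with the surjectivity of the assignment ``graph in a pseudo-surface'' $\mapsto$ ``coloured ribbon graph'' described in the two paragraphs preceding that theorem. First I would fix notation: write $\Phi$ for that assignment, sending a graph $G$ embedded in a pseudo-surface $\check\Sigma$ to the coloured ribbon graph obtained by resolving all pinch points, taking a regular neighbourhood of the resulting graph on a surface, colouring the vertices of the neighbourhood according to which pinch point of $\check\Sigma$ they came from, and colouring its boundary components according to the regions of $G\subset\check\Sigma$. By the assertion that this coloured ribbon graph is \emph{unique}, $\Phi$ is a well-defined function on graphs embedded in pseudo-surfaces, and Theorem~\ref{dh} says exactly that $\Phi(G_1)=\Phi(G_2)$ if and only if $G_1$ and $G_2$ are related by stabilization. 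Hence $\Phi$ descends to a well-defined \emph{injection} $\overline\Phi$ from the set of stabilization-equivalence classes of graphs embedded in pseudo-surfaces into the set of coloured ribbon graphs.

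It remains to prove $\overline\Phi$ is surjective. For this I would use the reconstruction $\Psi$ in the other direction, also given before Theorem~\ref{dh}: from a coloured ribbon graph $\G$, realise $\G$ as a graph cellularly embedded in a surface $\Sigma$, identify the vertices lying in a common colour class (creating pinch points), and add one handle, disjoint from the graph, between each pair of faces in a common colour class. I would then check directly that $\Phi(\Psi(\G))=\G$. Resolving the pinch points of $\Psi(\G)$ separates the identified vertices again and recovers $\Sigma$ with $\G$ cellularly embedded, except with the extra handles still present; since those handles are disjoint from the graph, a regular neighbourhood of the graph is unaffected and is precisely the ribbon graph $\G$. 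For the colourings: two vertices of this neighbourhood get the same $\Phi$-colour iff they came from the same pinch point iff they belonged to the same colour class of $\G$; and the regions of $\Psi(\G)$ are exactly the unions of faces of $\G$ within a common colour class, so the boundary colour classes produced by $\Phi$ coincide with those of $\G$. Thus $\Phi\circ\Psi=\mathrm{id}$, so $\Phi$ (hence $\overline\Phi$) is onto, and $\overline\Phi$ is a bijection.

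The only step needing genuine care --- and the one I expect to be the main obstacle --- is the verification $\Phi\circ\Psi=\mathrm{id}$, in particular tracking the vertex and boundary colourings faithfully through the round trip and checking that the freedom in $\Psi$ (how the merging handles are routed, noted in the remark after the reconstruction) does not affect the outcome. Theorem~\ref{dh} itself disposes of that last concern, since any two choices of $\Psi(\G)$ are stabilization-equivalent and therefore have the same image under $\Phi$; the remainder is bookkeeping about neighbourhoods and pinch-point resolutions.
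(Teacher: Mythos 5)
Your argument is correct and is exactly the reasoning the paper intends: the paper states Corollary~\ref{hd} without proof as an immediate consequence of Theorem~\ref{dh} together with the two constructions (graph in a pseudo-surface $\to$ coloured ribbon graph, and back) described just before it. Your explicit verification that $\Phi\circ\Psi=\mathrm{id}$ --- in particular the observation that the ambiguity in routing the merging handles is absorbed by Theorem~\ref{dh} --- is precisely the detail the paper leaves implicit, so this is the same approach, just written out in full.
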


We use $[G\subset\check{\Sigma}]_{\mathrm{stab}}$ to denote the stabilization equivalence class.

We use $(G\subset\check{\Sigma})\ba e$ and $(G\subset\check{\Sigma})/e$ to denote the result of deleting and contracting, respectively, an edge $e$ of a graph in a pseudo-surface $G\subset\check{\Sigma}$ using contraction C1 and deletion D1. 

\begin{theorem}\label{pg}
Let $\G$ be a coloured ribbon graph and $[G\subset \check{\Sigma}]_{\mathrm{stab}}$ be its corresponding class of graphs in pseudo-surfaces, and let $e$ denote corresponding edges. Then 
\begin{enumerate}
    \item $\,\G\ba e \leftrightarrow [(G\subset\check{\Sigma})\ba e]_{\mathrm{stab}}$, and 
    \item $\,\G/e\leftrightarrow[(G\subset\check{\Sigma})/e]_{\mathrm{stab}}$.
\end{enumerate}
    That is, the following diagrams commute.
  \[
    \begin{array}{ccc}
    [G\subset\check{\Sigma} ]_{\mathrm{stab}}& \rightarrow & [(G\subset\check{\Sigma})/e]_{\mathrm{stab}} \\
    \updownarrow & & \updownarrow \\
    {\G} & \rightarrow & {\G}/e
    \end{array}
    \qquad
    \begin{array}{ccc}
    [G\subset\check{\Sigma} ]_{\mathrm{stab}}& \rightarrow & [(G\subset\check{\Sigma})\ba e]_{\mathrm{stab}} \\
    \updownarrow & & \updownarrow \\
    {\G} & \rightarrow & {\G}\ba e
    \end{array}
    \]
\end{theorem}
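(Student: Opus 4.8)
The plan is to verify the two commuting diagrams by chasing through the explicit construction that takes a graph embedded in a pseudo-surface to its coloured ribbon graph, and checking compatibility with deletion and contraction separately. Throughout I will use the correspondence of Corollary~\ref{hd} and Theorem~\ref{dh}, which tell us that the coloured ribbon graph is a complete invariant of the stabilization equivalence class; thus to prove $\G\ba e \leftrightarrow [(G\subset\check{\Sigma})\ba e]_{\mathrm{stab}}$ it suffices to show that the coloured ribbon graph obtained from $(G\subset\check{\Sigma})\ba e$ by the construction of Figure~\ref{ps2rg} is equivalent to $\G\ba e$, where $\G$ is the coloured ribbon graph of $G\subset\check{\Sigma}$. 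The construction has three ingredients—resolving pinch points, taking a regular neighbourhood, and reading off the vertex and boundary colourings—so the proof splits into checking that each ingredient interacts correctly with deletion D1 and with contraction C1.

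First I would treat deletion. Deleting the path $e$ from $G\subset\check\Sigma$ leaves the pseudo-surface (hence all its pinch points) unchanged, so resolving pinch points commutes with deletion up to relabelling; taking a regular neighbourhood of $G\ba e$ in the surface is, locally away from $e$, the same as the neighbourhood of $G$, while near $e$ we simply discard the ribbon for $e$—this is exactly the combinatorial operation $\G \mapsto \G\ba e$ on ribbon graphs. The vertex colouring is read off from the (unchanged) vertices of $G$, so it is unaffected. The only subtle point is the boundary colouring: deleting $e$ from the embedded graph changes the regions of the embedding exactly as deleting $e$ from the ribbon graph changes its boundary components (merging two regions, splitting one, or modifying one), and one must check that the three cases in the definition of deletion for boundary-coloured ribbon graphs (the three bullet points preceding Table~\ref{tablefp}) match the three ways the regions of $G\subset\check\Sigma$ can change when $e$ is removed. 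This is a case analysis, but a short one, and it is essentially forced by the fact that the regions of the embedded graph are in colour-preserving bijection with the boundary components of the ribbon graph.

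Next I would treat contraction, which is the genuinely delicate half. Here contracting $e$ may create a pinch point (Figures~\ref{cont.a2}, \ref{cont.b2}), so the "resolve pinch points" step of the construction is now actively doing something, and the vertex colouring is where the action is: contracting $e = (u,v)$ in the ribbon graph produces one of the three outcomes listed before Table~\ref{tablevp} (merge $u,v$; create a single new vertex; create two new vertices from a loop), and I must match each against what happens to $G\subset\check\Sigma/e$ after resolving the resulting pinch point. The key observation is that the vertex colour classes were \emph{defined} so that $[u]=[v]$ iff $u,v$ come from the same pinch point, and resolving a pinch point is precisely the reverse of the quotient that merged vertices; so the bookkeeping of colour classes under ribbon-graph contraction mirrors, by construction, the bookkeeping of pinch-point vertices under topological contraction-then-resolution. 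Meanwhile contraction does not change the number of boundary components of a ribbon graph, and correspondingly it does not change the regions of the embedded graph (contraction is a quotient by a contractible arc, which does not affect the complement up to homeomorphism), so the boundary colouring transports unchanged—again matching the "contraction is clear for boundary-coloured ribbon graphs" remark.

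The main obstacle I anticipate is the contraction case, specifically making the correspondence between "resolving a pinch point created by contracting $e$" and "the local contraction pictures in Table~\ref{tablecontractrg} together with the colour-class rules in Table~\ref{tablevp}" genuinely rigorous rather than merely picture-plausible. The cleanest way to handle it is to pass through an intermediate object: first contract $e$ as a map of embedded graphs in pseudo-surfaces (creating possible pinch points), then invoke Theorem~\ref{dh} to see that resolving those pinch points and restabilizing does not change the coloured ribbon graph, and finally check that the ribbon graph of the resolved, stabilized object has vertex set, boundary set, and both colourings agreeing with $\G/e$ by tracking a small neighbourhood of $e\cup\{u,v\}$—the boundary components of $e\cup\{u,v\}$ viewed as curves on $\G$ are exactly the curves along which discs are glued in the definition of ribbon-graph contraction, and they are exactly the loops one shrinks to vertices when resolving the pinch point. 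Once that local identification is in place, both diagrams commute, and the deletion diagram follows from the simpler analysis above.
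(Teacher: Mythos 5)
Your proposal is correct and follows essentially the same route as the paper's own proof: deletion is dispatched by observing that the pseudo-surface and its pinch points are untouched while adjacent regions may merge (matching the merging of boundary colour classes), and contraction is settled by the same three-way case analysis (non-loop, orientable loop, non-orientable loop) comparing local ribbon-graph contraction with topological contraction followed by pinch-point resolution. The only blemish is your parenthetical claim that contraction preserves the regions because it is ``a quotient by a contractible arc'': when $e$ is a loop the collapsed path is a circle, possibly essential in $\check{\Sigma}$, so the correct (and sufficient) reason is simply that the quotient map restricts to a homeomorphism on $\check{\Sigma}\setminus G$, which is disjoint from $e$.
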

\begin{proof}
Edge deletion in $G\subset\check{\Sigma}$ does not change the pseudo-surface or create any pinch points. However, it may merge the two regions adjacent to the edge $e$. This corresponds to merging boundary components as in Figure~\ref{compdel}.

For contraction there are three cases to consider: when $e$ is not a loop, when it is a loop that has an orientable neighbourhood, and when it is a loop that has a non-orientable neighbourhood. These three cases are considered in Figures~\ref{compdel2}--\ref{compdel4}.
\end{proof}

\begin{figure}[!ht]
\centering
\subfigure[Compatibility of deletion]{
\includegraphics[height=30mm]{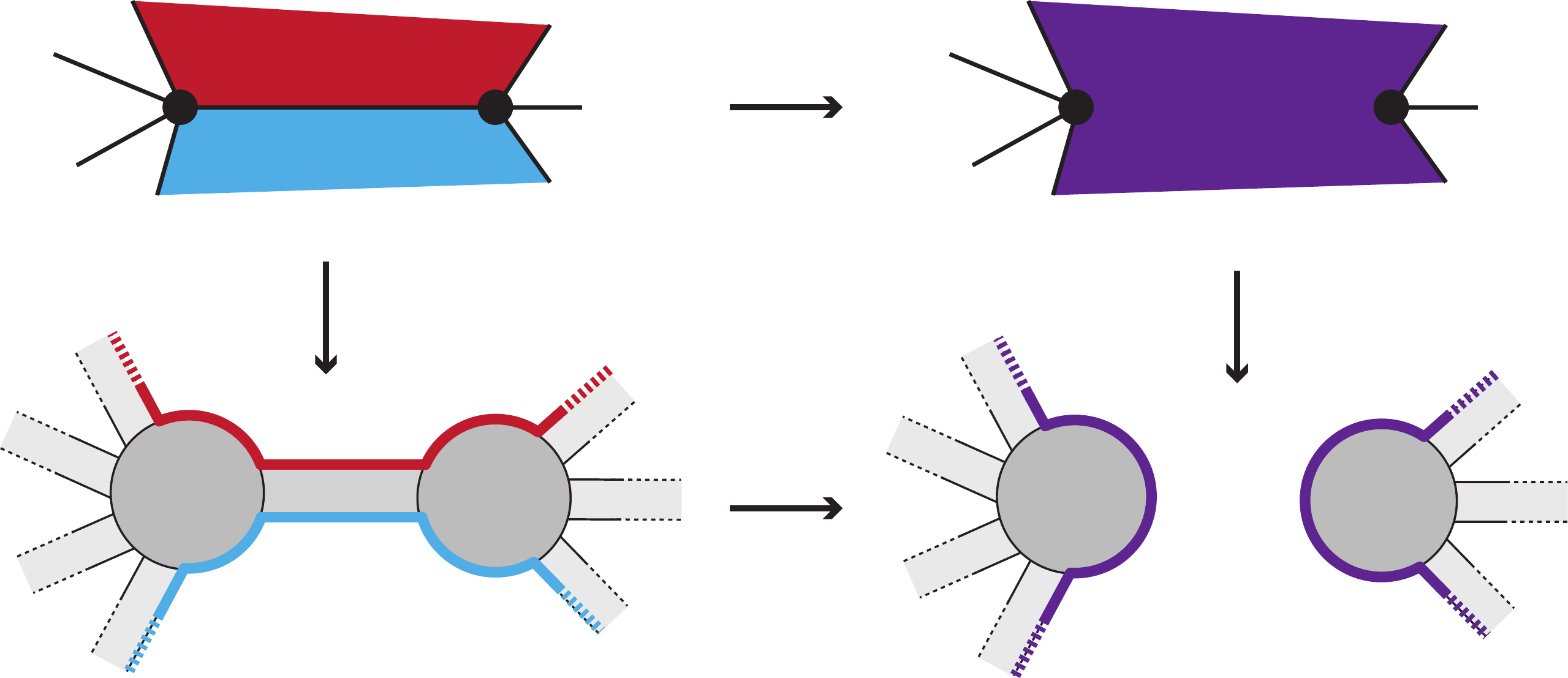}
\label{compdel}
}
\quad
\subfigure[Compatibility of contraction: Case 1]{
\includegraphics[height=30mm]{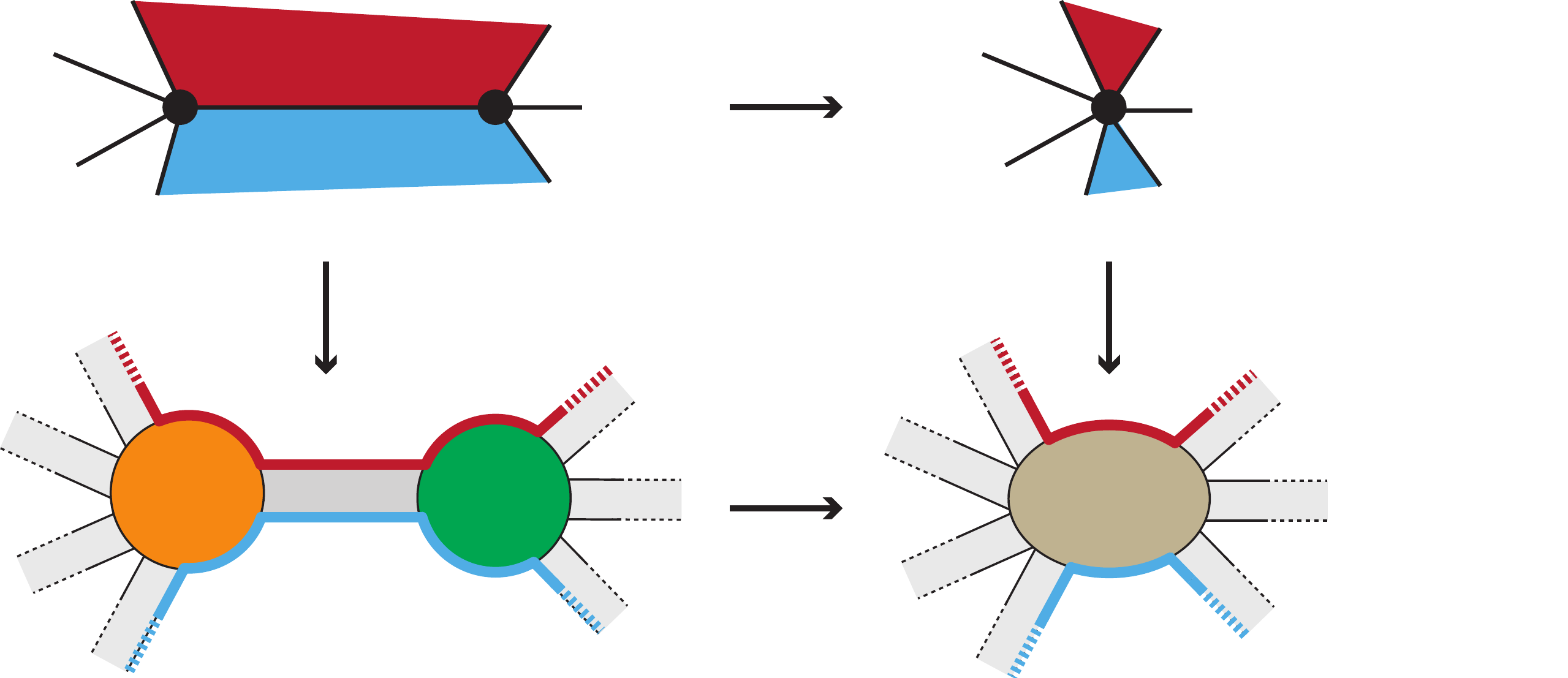}
\label{compdel2}
}
\subfigure[Compatibility of contraction: Case 2]{
\includegraphics[height=50mm]{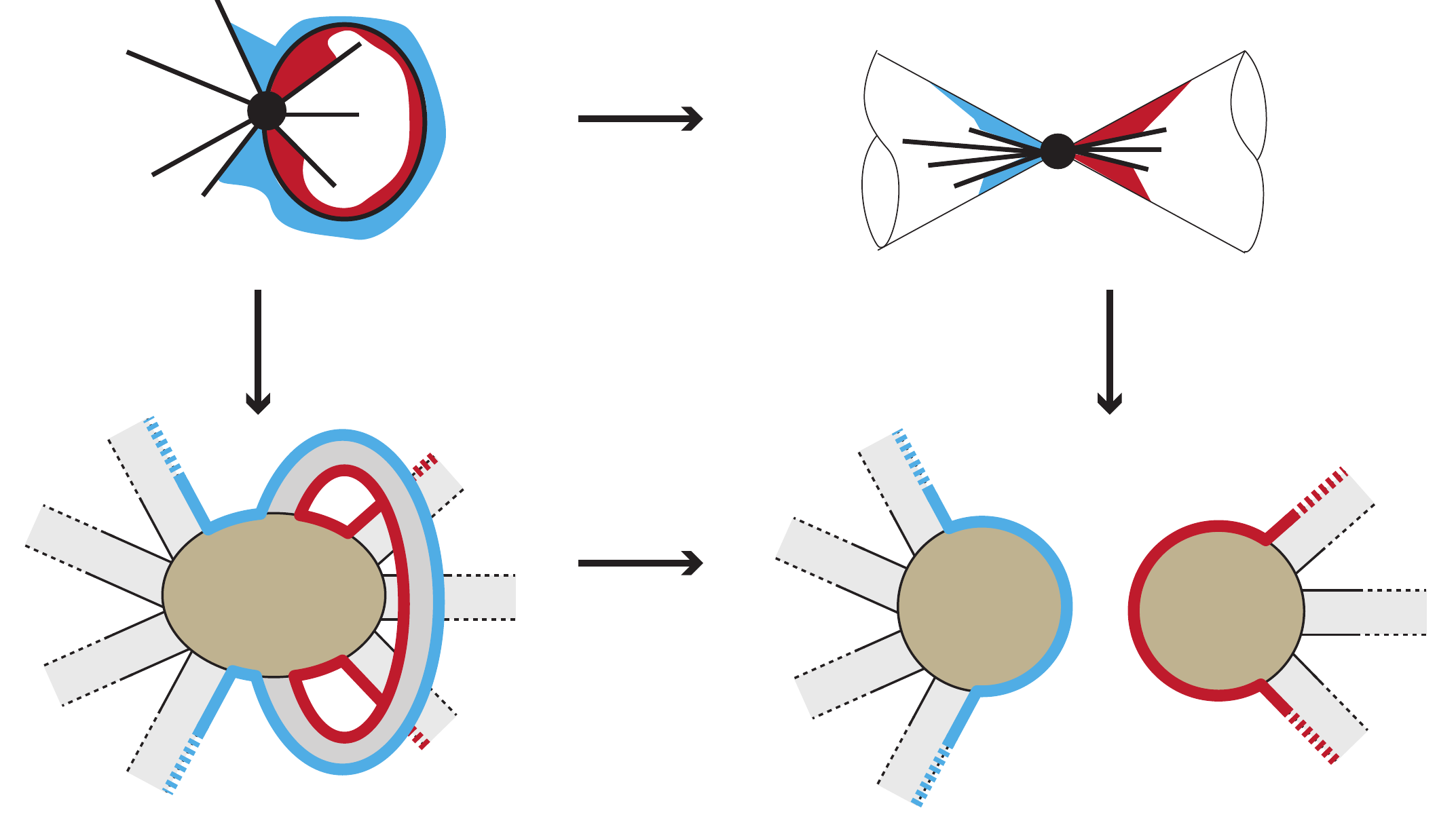}
\label{compdel3}
}

\subfigure[Compatibility of contraction: Case 3]{
\labellist
\small\hair 2pt
\pinlabel  {\tiny cut} at 202 320
\pinlabel  {\tiny surface} at 202 295
\pinlabel  {\tiny contract} at 380 320
\pinlabel  {\tiny edge} at 380 295
\pinlabel  {\tiny flip} at 590 320
\pinlabel  {\tiny glue} at 650 220
\pinlabel  {\tiny surface} at 650 200
\endlabellist
\includegraphics[height=60mm]{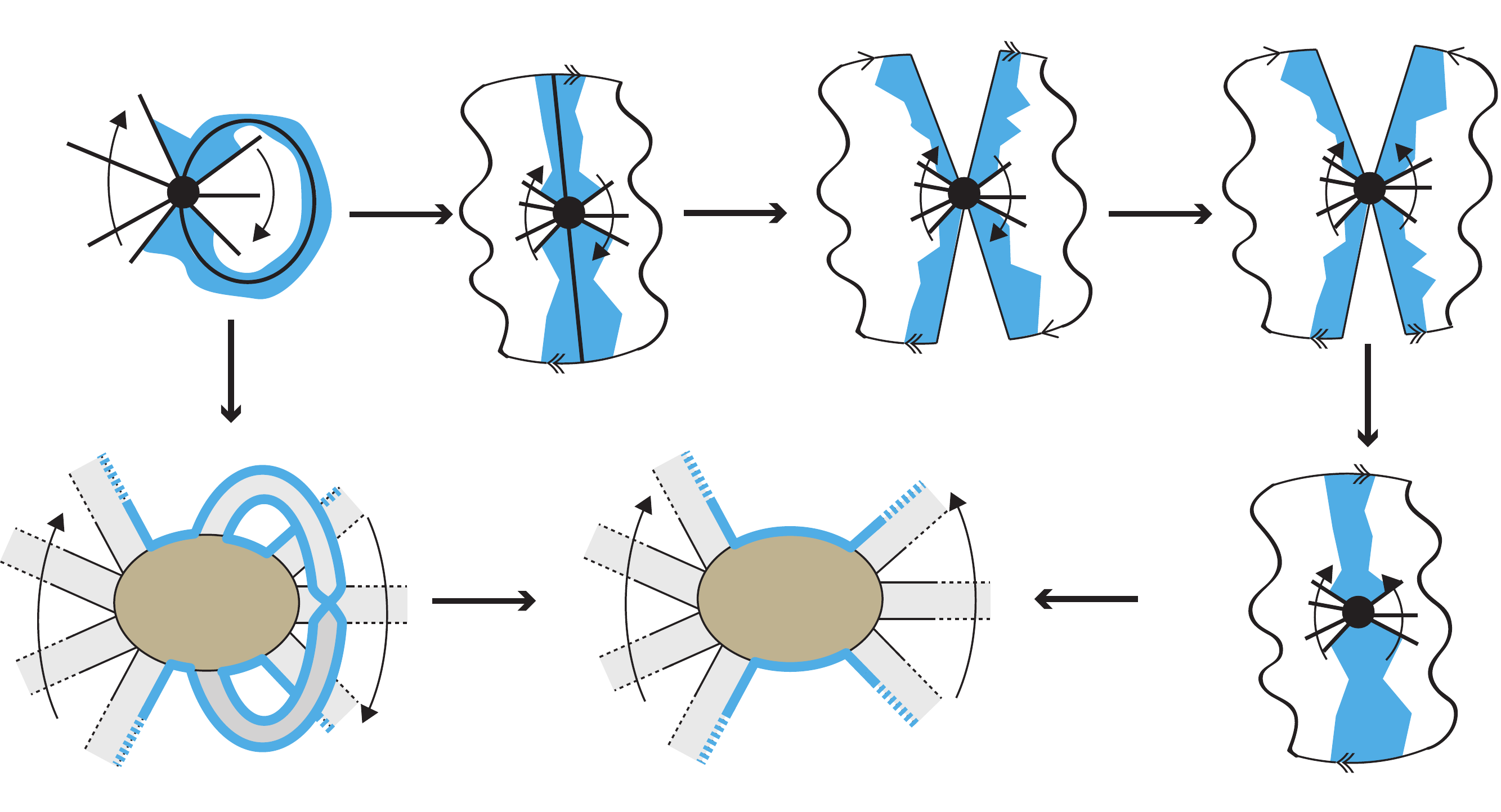}
\label{compdel4}
}
\caption{Compatibility of deletion and contraction for Theorem~\ref{pg}}
\end{figure}

\begin{corollary}[Corollary of Theorem~\ref{dh}]\label{wk}
\begin{enumerate} 
\item \label{wk1}
 The set of coloured ribbon graphs is in 1-1 correspondence with the set of stabilization equivalence classes of graphs embedded in pseudo-surfaces.
\item \label{wk2}
 The set of boundary coloured ribbon graphs is in 1-1 correspondence with the set of stabilization equivalence classes of graphs embedded in surfaces.
\item \label{wk3} 
The set of vertex coloured ribbon graphs is in 1-1 correspondence with the set of graphs cellularly embedded in pseudo-surfaces.
\item \label{wk4}
The set of ribbon graphs is in 1-1 correspondence with the set of graphs cellularly embedded in surfaces.
\end{enumerate}
\end{corollary}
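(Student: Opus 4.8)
The plan is to derive all four bijections from Theorem~\ref{dh} by restriction. Part~\ref{wk1} is exactly the content of Corollary~\ref{hd}, so there is nothing new to do there; for the other three I want to identify, inside the set of all coloured ribbon graphs, the subsets corresponding to the three smaller classes of embedded graphs, and match them with the three decorated ribbon-graph notions. The key is the correspondence $\G\leftrightarrow[G\subset\check{\Sigma}]_{\mathrm{stab}}$ set up before Theorem~\ref{dh}: identifying the vertices of $\G$ that share a colour class is precisely what creates the pinch points of $\check{\Sigma}$, and attaching a handle between faces of $\G$ that share a colour class is precisely what destroys cellularity of $G\subset\check{\Sigma}$. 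So the vertex colouring of $\G$ measures the failure of $\check{\Sigma}$ to be a genuine surface, and the boundary colouring of $\G$ measures the failure of $G\subset\check{\Sigma}$ to be cellular.

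First I would record two facts, each an immediate consequence of the construction of $\G$ from $G\subset\check{\Sigma}$. (i) $\check{\Sigma}$ is a genuine surface (no pinch points) if and only if all vertex colour classes of $\G$ are singletons, equivalently $\G$ is, carrying only the redundant trivial vertex colouring, a boundary-coloured ribbon graph; moreover, since stabilization neither creates nor destroys pinch points, this is a property of the entire stabilization class, so restricting the bijection of Part~\ref{wk1} to these classes gives Part~\ref{wk2}. (ii) $G\subset\check{\Sigma}$ is cellularly embedded if and only if every region is a disc, which happens exactly when each region is bordered by a single boundary component of $\G$, i.e.\ when all boundary colour classes of $\G$ are singletons, equivalently $\G$ is a vertex-coloured ribbon graph; conversely a trivial boundary colouring forces the reconstruction to add no handles and hence to return a cellular embedding.

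The step I expect to be the main obstacle is that in Parts~\ref{wk3} and~\ref{wk4} the embedded-graph side lists graphs themselves (up to equivalence), not stabilization classes of them, so I must show that a cellularly embedded graph in a (pseudo-)surface is stabilization-equivalent to no other cellular embedding --- equivalently, that each relevant stabilization class contains a unique cellular representative. To see this I would first resolve all pinch points (which can be handled separately, being untouched by stabilization) to reduce to graphs cellularly embedded in genuine surfaces, and then use the standard equivalence between ribbon graphs and cellularly embedded graphs in surfaces recalled in the text: a cellular embedding is determined up to equivalence by its underlying ribbon graph (indeed the Euler characteristic, and so the homeomorphism type, of the surface is read off from the numbers of vertices, edges and boundary components of $\G$). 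One can also argue directly that no nontrivial stabilization keeps an embedding cellular: from a cellular embedding no handle is available to be removed, and any handle that is added lies in the interior of a face and destroys its disc property. Either way, on the cellular subclasses ``stabilization equivalence class'' collapses to ``equivalence class'', and restricting Part~\ref{wk1} to cellular embeddings yields Part~\ref{wk3}. Finally, applying both restrictions at once --- a coloured ribbon graph with both colourings trivial is just a ribbon graph, and the two triviality conditions are plainly independent --- identifies ribbon graphs with graphs cellularly embedded in surfaces, which is Part~\ref{wk4}.
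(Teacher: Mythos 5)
Your proof is correct and follows essentially the same route as the paper: all four bijections are obtained by restricting the correspondence of Corollary~\ref{hd}, observing that the absence of pinch points makes the vertex colouring redundant and that cellularity makes the boundary colouring redundant. You are in fact more careful than the paper on one point --- checking that for cellular embeddings the stabilization class collapses to a single equivalence class, so that the right-hand sides of items~\ref{wk3} and~\ref{wk4} can be stated as embeddings rather than stabilization classes --- which the paper handles only implicitly via the classical ribbon-graph/cellular-embedding correspondence.
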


\begin{proof}
Item~\ref{wk1} is a restatement of Corollary~\ref{hd}, and item~\ref{wk4} is the classical result mentioned at the start of Section~\ref{s.4}. 

Item~\ref{wk2} follows from Corollary~\ref{hd} since a graph embedded in a surface is also a graph embedded in a pseudo-surface. Since there are no pinch points, each vertex of the ribbon graph belongs to a distinct colour class and so the vertex colour classes are redundant.

Similarly, item~\ref{wk3} follows from Corollary~\ref{hd} since a graph cellularly embedded in a surface is also a graph embedded in a surface. As the embedding is cellular, each boundary component of the ribbon graph corresponds to a distinct region of the graph in the pseudo-surface.  Thus every boundary component of the ribbon graph belongs to a distinct colour class and so the boundary colour classes are redundant.
\end{proof}

\begin{corollary}[Corollary of Theorem~\ref{pg}]\label{gp}
\begin{enumerate} 
\item \label{gp1} 
If $\mathbb{G}$ is a coloured ribbon graph and $[G\subset \check{\Sigma}]_{\mathrm{stab}}$ its corresponding class of graphs  in  pseudo-surfaces, then
$\mathbb{G}\ba e\leftrightarrow[(G\subset\check{\Sigma})\ba e]_{\mathrm{stab}}$ and $\mathbb{G}/e\leftrightarrow[(G\subset \check{\Sigma})/e]_{\mathrm{stab}},$
where contraction C1 and deletion D1 are used.
\item \label{gp2} 
If $\mathbb{G}$ is a boundary coloured ribbon graph and $[G\subset \Sigma]_{\mathrm{stab}}$ its corresponding class of graphs  in surfaces, then
$\mathbb{G}\ba e\leftrightarrow[(G\subset\Sigma)\ba e]_{\mathrm{stab}}$ and $\mathbb{G}/e\leftrightarrow[(G\subset \check{\Sigma})/e]_{\mathrm{stab}},$
where contraction C2 and deletion D1 are used.
\item \label{gp3} 
If $\mathbb{G}$ is a vertex coloured ribbon graph and $G\subset \check{\Sigma}$ its corresponding graph cellularly embedded in a pseudo-surface, then
$\mathbb{G}\ba e\leftrightarrow(G\subset\check{\Sigma})\ba e$ and $\mathbb{G}/e\leftrightarrow(G\subset\check{\Sigma})/e,$
where contraction C1 and deletion D2 are used.
\item \label{gp4} 
If $\mathbb{G}$ is a ribbon graph and $G\subset \Sigma$ its corresponding graph cellularly embedded in a surface, then
$\mathbb{G}\ba e\leftrightarrow(G\subset{\Sigma})\ba e$ and $\mathbb{G}/e\leftrightarrow(G\subset{\Sigma})/e,$
where contraction C2 and deletion D2 are used.
\end{enumerate}
\end{corollary}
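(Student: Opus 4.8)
The plan is to derive the whole corollary from Theorem~\ref{pg} together with the correspondences listed in Corollary~\ref{wk}. Part~\ref{gp1} is literally a restatement of Theorem~\ref{pg}, so there is nothing to prove there. For the three remaining parts the point is that, as we pass from graphs in pseudo-surfaces to the three smaller classes, deletion D1 may have to be replaced by D2 and contraction C1 by C2; the substance of the argument is that these replacements are mirrored on the ribbon-graph side simply by discarding one of the two colourings, and that within the cellular classes a stabilization class has a unique cellular representative, which is why parts~\ref{gp3} and~\ref{gp4} involve no $[\,\cdot\,]_{\mathrm{stab}}$.

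First I would isolate the two facts about the correspondence of Theorem~\ref{dh} that do the real work. Recall that in the coloured ribbon graph attached to a graph $X$ in a pseudo-surface, two vertices share a colour class exactly when they come from a common pinch point of $X$, and two boundary components share a colour class exactly when their regions have been joined by an added handle. Unwinding the definitions of \emph{resolving a pinch point} and of \emph{stabilization} then gives: (i) the graph in a surface obtained from $X$ by resolving all of its pinch points has the same ribbon graph and the same boundary colouring as $X$, but now with trivial vertex colouring --- that is, its coloured ribbon graph is the underlying boundary coloured ribbon graph of the one attached to $X$; and dually (ii) a graph obtained from $X$ by removing redundant handles until it is cellular has the same ribbon graph and the same vertex colouring as $X$, but with trivial boundary colouring --- its coloured ribbon graph is the underlying vertex coloured ribbon graph. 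Alongside these I would record two stability observations: deletion D1 leaves the pseudo-surface, hence its pinch points, unchanged, so it never leaves the class of graphs in surfaces; and contraction C1 changes neither the set of boundary components of the ribbon graph nor their colours, so it never destroys cellularity.

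With these in hand the remaining parts follow by bookkeeping. For \ref{gp2}: by part~\ref{wk2} of Corollary~\ref{wk}, $\G$ is the underlying boundary coloured ribbon graph of the coloured ribbon graph corresponding to $[G\subset\Sigma]_{\mathrm{stab}}$; for deletion we invoke the deletion half of Theorem~\ref{pg} together with the stability observation for D1, and for contraction we invoke the contraction half of Theorem~\ref{pg} and then apply (i), so that passing from C1 to C2 amounts exactly to forgetting the vertex colouring and leaving $\G/e$. Part~\ref{gp3} is the mirror image: cellular embeddings replace surface embeddings, part~\ref{wk3} of Corollary~\ref{wk} replaces part~\ref{wk2}, contraction is handled by the stability observation for C1, and deletion is handled by the deletion half of Theorem~\ref{pg} together with (ii), so that passing from D1 to D2 amounts to forgetting the boundary colouring. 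Part~\ref{gp4} combines the two, using part~\ref{wk4} of Corollary~\ref{wk}, (i) for contraction, and (ii) for deletion.

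The only step that requires genuine care --- everything else being bookkeeping on top of Theorem~\ref{pg} --- is establishing (i) and (ii): that resolving a pinch point and removing a redundant handle act on the associated coloured ribbon graph merely by trivialising one colouring while leaving the ribbon graph itself and the other colouring untouched. This is where one must actually inspect the two topological constructions (resolution separates the sheets through a pinch point without disturbing any region; removing a redundant handle re-separates two regions without disturbing any pinch point) and verify that they have no further side effects on the associated coloured ribbon graph.
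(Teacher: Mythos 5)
Your proposal is correct and follows essentially the same route as the paper: part~\ref{gp1} is a restatement of Theorem~\ref{pg}, and the remaining parts reduce to observing that resolving pinch points (the only difference between C1 and C2) corresponds to trivialising the vertex colouring, while removing redundant handles (the only difference between D1 and D2) corresponds to trivialising the boundary colouring. Your explicit isolation of facts (i) and (ii) is just a slightly more systematic packaging of the same argument.
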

\begin{proof}
Item~\ref{gp1} is a restatement of Theorem~\ref{pg}. The remaining items also follow from  Theorem~\ref{pg}.

For item~\ref{gp2}, the only difference between the deletion and contraction operations for graphs on surfaces and those for graphs on  pseudo-surfaces is that if contraction of an edge on a surface creates a pinch point, then it is resolved. Thus this is the only case we need to examine. However, when converting  a graph on a pseudo-surface to a coloured ribbon graph the first step is to resolve any pinch points.  Thus if every vertex of the ribbon graph $\mathbb{G}/e$ is in a distinct colour class, we see that the corresponding graph on a (pseudo-)surface is $(G\subset\check{\Sigma})/e$. 

For item~\ref{gp3}, the only difference between the deletion and contraction operations for graphs cellularly embedded in pseudo-surfaces and those embedded in  pseudo-surfaces is that redundant handles should be removed after deleting an edge. This corresponds to placing each boundary component of the ribbon graph in a distinct colour class. It follows that $\mathbb{G}\ba e$ corresponds to the graph cellularly embedded in a pseudo-surface $(G\subset\check{\Sigma})\ba e$.

Item~\ref{gp4} follows by combining the arguments for items~\ref{gp2} and~\ref{gp3}. 
\end{proof}

\subsection{Duality}

The construction of the \emph{geometric dual}, $G^*\subset \Sigma$, of a cellularly embedded graph $G\subset \Sigma$ is well known: $V(G^*)$ is obtained by placing one vertex in each face of $G$, and $E(G^*)$ is obtained by embedding an edge of $G^*$ between two vertices whenever the faces of $G$ in which they lie are adjacent. Geometric duality has a particularly neat description when described in the language of ribbon graphs. Let $\G=(V(\G),E(\G))$ be a ribbon graph. Recalling that, topologically, a ribbon graph is a surface with boundary, we cap off the holes using a set of discs, denoted by $V(\G^*)$, to obtain a surface without boundary. The \emph{geometric dual} of $\G$ is the ribbon graph $\G^*=(V(\G^*),E(\G))$. Observe that there is a 1-1 correspondence between the vertices of $\G$ (respectively, $\G^*$) and the boundary components of $\G^*$ (respectively, $\G$). If $\G$ is a coloured ribbon graph then this provides a way to transfer the vertex and boundary colourings between a ribbon graph and its dual.
\begin{definition}
Let $\G$ be a coloured ribbon graph. Its \emph{dual}, $\G^*$, is the coloured ribbon graph consisting of the ribbon graph $\G^*$ with vertex colouring induced from the boundary colouring of $\G$, and boundary colouring induced from the vertex colouring of $\G$. \end{definition}

The definition of a dual of a coloured ribbon graph induces, by forgetting the appropriate colour classes, duals of boundary coloured ribbon graphs and vertex coloured ribbon graphs.
Observe that the dual of a  boundary coloured ribbon graph is a vertex coloured ribbon graph, and vice versa. Thus neither class is closed under duality.

\begin{theorem}\label{zxz}
Let $\mathbb{G}$ be a coloured ribbon graph and let $e$ be an edge of $\G$. Then $(\G^*)^*=\G$, $(\G/e)^*=(\G^*)\ba e$, and $(\G\ba e)^*=(\G^*)/e$.
\end{theorem}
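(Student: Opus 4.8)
The plan is to reduce all three identities to the classical duality facts for uncoloured ribbon graphs together with a comparison of the colour-update rules. Recall that for an uncoloured ribbon graph $\G$ one has $(\G^*)^*=\G$, $(\G/e)^*=\G^*\ba e$ and $(\G\ba e)^*=\G^*/e$ (see \cite{EMMbook}), and that the standard proofs of these identities do more than assert an abstract equivalence of ribbon graphs: they identify the underlying surfaces-with-boundary in such a way that the canonical duality bijections $V(\G)\leftrightarrow F(\G^*)$ and $F(\G)\leftrightarrow V(\G^*)$ are respected. I would begin by recording these refined versions, since they are exactly what is needed to transport the colourings.

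For $(\G^*)^*=\G$: by definition the vertex colouring of $\G^*$ is the transport of the boundary colouring of $\G$ along $F(\G)\leftrightarrow V(\G^*)$, and the boundary colouring of $\G^*$ is the transport of the vertex colouring of $\G$ along $V(\G)\leftrightarrow F(\G^*)$. Dualising again and using that the composite $V((\G^*)^*)\leftrightarrow F(\G^*)\leftrightarrow V(\G)$ is the canonical identification (and likewise on boundary components), the vertex colouring of $(\G^*)^*$ is the transport of the boundary colouring of $\G^*$, hence equals the vertex colouring of $\G$; symmetrically for boundary colours. Thus $(\G^*)^*=\G$ as coloured ribbon graphs.

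For $(\G/e)^*=\G^*\ba e$: the underlying ribbon graphs agree by the uncoloured identity, so it remains only to match the two colourings. On the vertex side, contraction does not alter boundary components, so the vertex colouring of $(\G/e)^*$ is the transport of the boundary colouring of $\G$; deletion does not alter vertices or their colours, so the vertex colouring of $\G^*\ba e$ is likewise the transported boundary colouring of $\G$, and the two agree because the identification $(\G/e)^*\cong\G^*\ba e$ intertwines the duality bijections. On the boundary side, the boundary colouring of $(\G/e)^*$ is the transport of the vertex colouring of $\G$ \emph{after} applying the contraction-of-$e$ rule to the vertex colour classes, whereas the boundary colouring of $\G^*\ba e$ is the transport of the vertex colouring of $\G$ \emph{followed by} applying the deletion-of-$e$ rule to the boundary colour classes. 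Here one observes that: (i) under duality the vertices of $\G$ incident with $e$ correspond to the boundary components of $\G^*$ meeting $e$, and the uncoloured identity $(\G/e)^*=\G^*\ba e$ says precisely that the way contraction creates, merges, or splits the vertices of $\G$ matches the way deletion creates, merges, or splits the boundary components of $\G^*$; and (ii) the three-case rule governing the effect of contraction on vertex colour classes (Table~\ref{tablevp}) is verbatim the same --- rename a class, merge two classes, or split a class into two copies of itself --- as the three-case rule governing the effect of deletion on boundary colour classes (Table~\ref{tablefp}). Combining (i) and (ii) gives equality of the two boundary colourings. Finally $(\G\ba e)^*=\G^*/e$ follows by applying the identity $(\G/e)^*=\G^*\ba e$ with $\G^*$ in place of $\G$ and dualising, using $(\G^*)^*=\G$.

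The only genuine work here is bookkeeping: checking that the surface identifications underlying the classical uncoloured duality identities carry the duality bijections $V\leftrightarrow F$ to one another, and that the loop cases (orientable versus non-orientable edge neighbourhood) line up --- but these are exactly the case distinctions already made in Tables~\ref{tablecontractrg}, \ref{tablevp} and~\ref{tablefp}, so no new phenomena arise. I expect the main obstacle to be purely notational: tracking which concrete vertex or boundary component gets renamed to what. This is cleanest if one treats a colouring as a partition of $V(\G)$ (respectively $F(\G)$) rather than as a labelling, and phrases every update rule at the level of blocks of the partition rather than of individual elements.
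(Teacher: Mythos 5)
Your proposal is correct and follows essentially the same route as the paper, which simply cites the uncoloured identities from \cite{EMMbook} and then ``observes the effects of the operations on the boundary components and vertices''; your write-up is a careful expansion of exactly that observation, with the key point being the match between the contraction rule for vertex colour classes (Table~\ref{tablevp}) and the deletion rule for boundary colour classes (Table~\ref{tablefp}) under the duality bijections.
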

\begin{proof}
The three identities are known to hold for ribbon graphs (see, e.g., \cite{EMMbook}).
The result then follows by observing the effects of the operations on the boundary components and vertices.
\end{proof}

\subsection{Loops in ribbon graphs}

An edge of a ribbon graph is a \emph{loop} if it is incident with exactly one vertex. A loop is said to be \emph{non-orientable} if that edge together with its incident vertex is homeomorphic to a M\"obius band, and otherwise it is said to be \emph{orientable}. See Table~\ref{tablecontractrg}. An edge is a \emph{bridge} if its removal increases the number of components of the ribbon graph. 

In plane graphs, bridges and loops are dual in the sense that an edge of a plane graph $G$ is a loop if and only if the corresponding edge in $G^*$ is a bridge. This leads to the name co-loop for a bridge, in this context. Such terminology would be inappropriate in the context of ribbon graphs, however, where there is more than one type of loop, so we use a new word.

\begin{definition}\label{doopdef}
Let $e$ be an edge of a ribbon graph $\G$ and $e^*$ be its corresponding edge in $\G^*$. Then $e$ is a \emph{dual-loop}, or more concisely a \emph{doop}, if $e^*$ is a loop in $\G^*$. A doop is said to be \emph{non-orientable} if $e^*$ is non-orientable, and is \emph{orientable} otherwise.
\end{definition}

Doops can be recognised directly in $\G$ by looking to see how the boundary components of $\G$ touch the edge, as Figure~\ref{dahkj}.

\begin{figure}[ht]
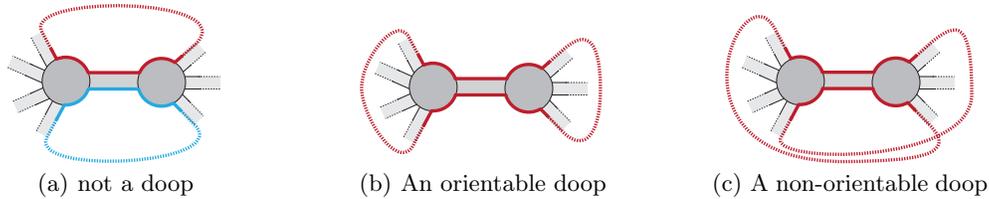

\centering
\subfigure[not a doop]{
\includegraphics[scale=.25]{figs/bc4} 
}
\hspace{1cm}
\subfigure[An orientable doop]{
\includegraphics[scale=.25]{figs/bc1} 
}
\hspace{1cm}
\subfigure[A non-orientable doop]{
\includegraphics[scale=.25]{figs/bc5}
}
\caption{Recognising doops}
\label{dahkj}
\end{figure}

The loop and doop terminology extends to coloured ribbon graphs.

\section{Constructing topological Tutte polynomials}\label{s.5}

Recall that our goal here is the extension of the deletion-contraction definition of the Tutte polynomial to the setting of graphs on surfaces. To do this, following \ref{t1}--\ref{t3}, we need the objects, deletion and contraction, and the cases. Following Section~\ref{s.3}, we now know our objects and our deletion and contraction operations for them. Moreover, we have just seen that there are four different settings to consider, as in Table~\ref{rst}. Since the class of coloured ribbon graphs is the most general, with the other three classes of objects being obtained from it by forgetting information, we will work with coloured ribbon graphs as our primary class.

\begin{table}
\begin{tabular}{| l | l |}
\hline
\textbf{Ribbon graphs} & \textbf{Graphs on surfaces} \\ \hline
 ribbon graphs &   graphs cellularly embedded in surfaces, \\ 
 \hline
 boundary coloured ribbon graphs &   graphs embedded in surfaces, \\  \hline
 vertex coloured ribbon graphs &   graphs cellularly embedded in pseudo-surfaces, \\
 \hline
 coloured ribbon graphs &   graphs embedded in pseudo-surfaces. 
 \\ \hline
\end{tabular}
\caption{The correspondence between ribbon graphs and graphs on surfaces}
\label{rst}
\end{table}

We now consider the problem in \ref{t3}: that of constructing the cases for the deletion-contraction relation.

The deletion-contraction relations \eqref{d1} for the classical Tutte polynomial are divided into cases according to whether an edge is a bridge, a loop, or neither. But these terms are specific to the setting of graphs, and so any canonical construction that we want to apply to a broader class of objects will need to avoid them.

We know that the specific deletion-contraction definition we need will depend upon the type of objects (graphs, ribbon graphs, etc.) under consideration. We also know that the recursion relation will, in general, be subdivided into various cases depending upon edge types, as in \eqref{d1}. Our first task is therefore to divide the edges up into different types. 

In general, it is far from obvious what edge types should be used. For example, one might try to define a Tutte polynomial for ribbon graphs by using bridges, loops, and ordinary edges as the edge types, and apply \eqref{d1} to ribbon graphs. However, the resulting polynomial is just the classical Tutte polynomial of the underlying graph. (In fact the situation is a little more subtle. For graphs, $G/e=G\ba e$ when $e$ is a loop. This is not true for ribbon graphs so, for example, changing $yT(G\ba e)$ to $yT(G/ e)$ in \eqref{d1}, which often happens in definitions of the graph polynomial, would require $x=y$ in order for this $T$ to be well-defined.) 

\subsection{A canonical approach to the cases}\label{s.heo}

We want a canonical way of defining edge types, and all we have to work with are the objects themselves, deletion, and contraction. We also require that any definition or construction we adopt should result in the classical Tutte polynomial when applied to graphs. This requirement, in fact, provides us with the insight enabling us to construct a general framework: let us start by seeing how to characterise edge types in this classical case, using only the concepts of deletion and contraction.

First observe that there are two connected graphs on one edge, \raisebox{-3mm}{\includegraphics[scale=.4]{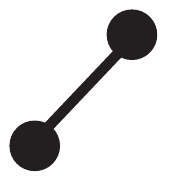}} and \raisebox{-3mm}{\includegraphics[scale=.4]{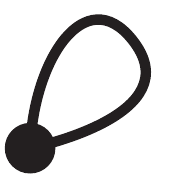}}, 
and that every graph on one edge consists of one of these together with some number of isolated vertices.

For an edge $e$ of a graph $G=(V,E)$ recall that $e^c$ denotes $E\ba e$. The pair \[(G/ e^c,G\ba e^c),\] after disregarding isolated vertices, is one of 
\[ \left(    \raisebox{-2mm}{\includegraphics[scale=.3]{figs/b14}}, \raisebox{-2mm}{\includegraphics[scale=.3]{figs/b13}}\right), 
\quad 
\left(    \raisebox{-2mm}{\includegraphics[scale=.3]{figs/b14}}, \raisebox{-2mm}{\includegraphics[scale=.3]{figs/b14}} \right),
\quad
\left(    \raisebox{-2mm}{\includegraphics[scale=.3]{figs/b13}}, \raisebox{-2mm}{\includegraphics[scale=.3]{figs/b13}}\right),
\quad 
\left(    \raisebox{-2mm}{\includegraphics[scale=.3]{figs/b13}}, \raisebox{-2mm}{\includegraphics[scale=.3]{figs/b14}} \right).
\]
Moreover these pairs classify edge types:
\begin{framed}
\begin{center}
\begin{tabular}{rl}
  
     $(G/ e^c,G\ba e^c)=\left(    \raisebox{-2mm}{\includegraphics[scale=.3]{figs/b14}}, \raisebox{-2mm}{\includegraphics[scale=.3]{figs/b13}} \right) $&$\iff e$ is ordinary       
   \\
    $(G/ e^c,G\ba e^c)=\left(    \raisebox{-2mm}{\includegraphics[scale=.3]{figs/b14}}, \raisebox{-2mm}{\includegraphics[scale=.3]{figs/b14}} \right) $&$\iff e$ is a loop   
   \\ 

    $(G/ e^c,G\ba e^c)=\left(    \raisebox{-2mm}{\includegraphics[scale=.3]{figs/b13}}, \raisebox{-2mm}{\includegraphics[scale=.3]{figs/b13}}\right) $&$\iff e$ is a bridge   
       \\
   $(G/ e^c,G\ba e^c)=\left(    \raisebox{-2mm}{\includegraphics[scale=.3]{figs/b13}}, \raisebox{-2mm}{\includegraphics[scale=.3]{figs/b14}}\right)$ & is impossible
\end{tabular}
\end{center}
\end{framed}

We say that an edge $e$ is of \emph{type} $(i,j)$, for $i,j\in \left\{    \raisebox{-2mm}{\includegraphics[scale=.3]{figs/b13}}, \raisebox{-2mm}{\includegraphics[scale=.3]{figs/b14}}\right\}$, if the pair $(G/ e^c ,G\ba e^c)$ is the pair $(i,j)$ after disregarding isolated vertices. Let $a_{i}$ and $b_{j}$ be indeterminates, and define a deletion-contraction relation by
\begin{equation}\label{hjad}
U(G) = 
  \begin{cases} 
   a_{i}U(G\ba e)+b_{j}U(G/e) & \text{if } e \text{ is of type }(i,j) \\
   \gamma^{n} & \text{if } G \text{ is edgeless, with } n\text{ vertices}.
  \end{cases}
\end{equation}
(Note that if it is preferred not to have to make use of the notion of vertices, then $\gamma$ can be taken to be 1.)

Rewriting \eqref{hjad}  in standard graph terminology gives
\begin{equation}\label{hjad2}
U(G) = 
\begin{cases}
a_{\includegraphics[height=3mm]{figs/b13}} U(G\ba e) + b_{\includegraphics[height=3mm]{figs/b13}} U(G/e) & \text{ if $e$ is a bridge,}
\\
a_{\includegraphics[height=3mm]{figs/b14}} U(G\ba e) + b_{\includegraphics[height=3mm]{figs/b14}} U(G/e) & \text{ if $e$ is a loop,}
\\
a_{\includegraphics[height=3mm]{figs/b14}} U(G\ba e) + b_{\includegraphics[height=3mm]{figs/b13}} U(G/e) & \text{ if $e$ is ordinary,}
\\
\gamma^n & \text{ if $G$ is edgeless on $n$ vertices.}
\end{cases}
\end{equation}

If $e$ is a loop $G/e=G\ba e$. If $e$ is a bridge, then $\gamma \,U(G/e)=U(G\ba e)$. (This latter result follows from the readily verified equation $\gamma \,U(G*H)=U(G\sqcup H)$, in which $*$ denotes the one-point join and $\sqcup$ the disjoint union.) Equation~\eqref{hjad2} can then be written as
\[U(G) = 
\begin{cases}
(\gamma a_{\includegraphics[height=3mm]{figs/b13}}  + b_{\includegraphics[height=3mm]{figs/b13}}) U(G/ e)  & \text{ if $e$ is a bridge,}
\\
(a_{\includegraphics[height=3mm]{figs/b14}}  + b_{\includegraphics[height=3mm]{figs/b14}}) U(G\ba e) & \text{ if $e$ is a loop,}
\\
a_{\includegraphics[height=3mm]{figs/b14}} U(G\ba e) + b_{\includegraphics[height=3mm]{figs/b13}} U(G/e) & \text{ if $e$ is ordinary,}
\\
\gamma^n & \text{ if $G$ is edgeless on $n$ vertices.}
\end{cases}
\]

Now we observe from the universality property of the Tutte polynomial (Theorem~\ref{thm1}) that $U(G)$ is the Tutte polynomial:
\[  U(G)= \gamma^{k(G)} 
a_{\includegraphics[height=3mm]{figs/b14}}^{|E|-r(G)}
b_{\includegraphics[height=3mm]{figs/b13}}^{r(G)} T(G;
\tfrac{ \gamma a_{\includegraphics[height=3mm]{figs/b13}}+ b_{\includegraphics[height=3mm]{figs/b13} }}
{b_{\includegraphics[height=3mm]{figs/b13}} },
\tfrac{a_{\includegraphics[height=3mm]{figs/b14}}+ b_{\includegraphics[height=3mm]{figs/b14}} }{ a_{\includegraphics[height=3mm]{figs/b14}}}).
  \]  
By setting 
$\gamma=a_{\includegraphics[height=3mm]{figs/b14}}=b_{\includegraphics[height=3mm]{figs/b13}}=1$, 
$a_{\includegraphics[height=3mm]{figs/b13}}=x-1$, and
$b_{\includegraphics[height=3mm]{figs/b14}}=y-1$
we recover the Tutte polynomial $T(G;x,y)$.

\medskip

The point of this discussion is that we have recovered the classical Tutte polynomial without having to refer to loops or bridges: these terms only appeared when we interpreted the general procedure in the terminology of graph theory. Thus we have a canonical procedure that we can apply to other classes of object to construct a `Tutte polynomial'. Let us now do this to define topological Tutte polynomials.

\begin{remark}\label{rem1}
The approach to Tutte polynomials that we have taken has its origins in the theory of canonical Tutte polynomials defined by T.~Krajewski, I.~Moffatt, and A.~Tanasa in \cite{KMT}, and the work on canonical Tutte polynomials of delta-matroid perspectives by I.~Moffatt and B.~Smith in \cite{BS}. In \cite{KMT} a `Tutte polynomial' for a connected graded  Hopf algebra is defined as a convolution product of exponentials of certain infinitesimals. Classes of combinatorial objects with suitable notions of deletion and contraction give rise to Hopf algebras and so have a \emph{canonical} Tutte polynomial associated with them. Under suitable conditions, these polynomials have recursive deletion-contraction formulae of the type found in \eqref{hjad} and \eqref{fhuf}. Canonical Tutte polynomials of Hopf algebras of `delta-matroid perspectives' are studied in \cite{BS}. Delta-matroid perspectives are introduced to offer a matroid theoretic framework for topological Tutte polynomials. In particular it is proposed in \cite{BS} that the graphical counter-part of delta-matroid perspectives are `vertex partitioned graphs in surfaces', which are essentially graphs in pseudo-surfaces. The polynomials presented here are compatible with those arising from the canonical Tutte polynomials of delta-matroid perspectives (see Section~\ref{dhjk}). The overall approach that is presented here is the result of our attempt to decouple the theory of canonical Tutte polynomials from the Hopf algebraic framework, and to decouple the topological graph theory from the matroid theoretic framework. We note that because we restrict our work to the setting of graphs in surfaces, many of our results here are  more general than what can be deduced from the present general theory of canonical Tutte polynomials.
\end{remark}

\subsection{The Tutte polynomial of coloured ribbon graphs: a detailed analysis}\label{po}

We apply the process of Section~\ref{s.heo} to coloured ribbon graphs, but postpone the more technical proofs until Section~\ref{tsr} to avoid  interrupting the narrative.
There are five connected coloured ribbon graphs on one edge.
Every ribbon graph on one edge consists of one of these together with some number of isolated vertices.

\begin{figure}[ht]
\centering
\begin{tabular}{ccccc}
\includegraphics[height=20mm]{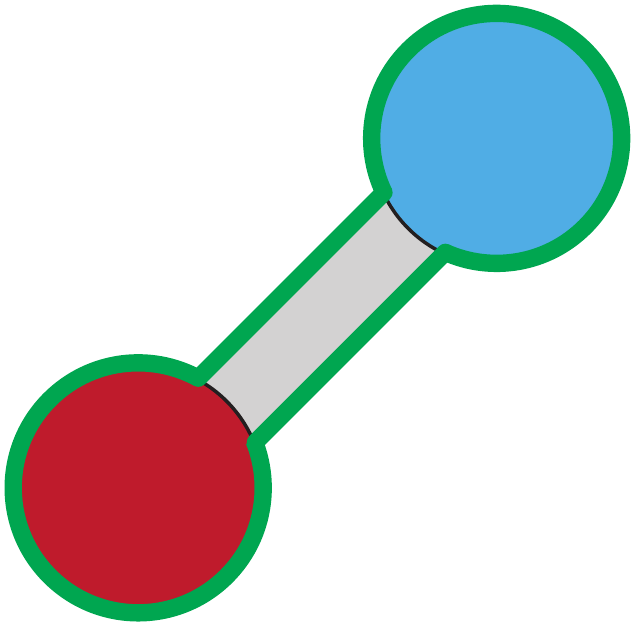}
&
\includegraphics[height=20mm]{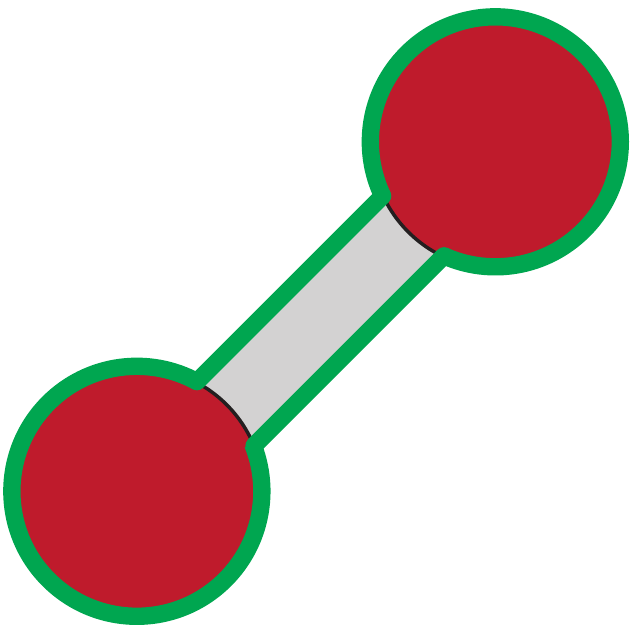}
&
\includegraphics[height=20mm]{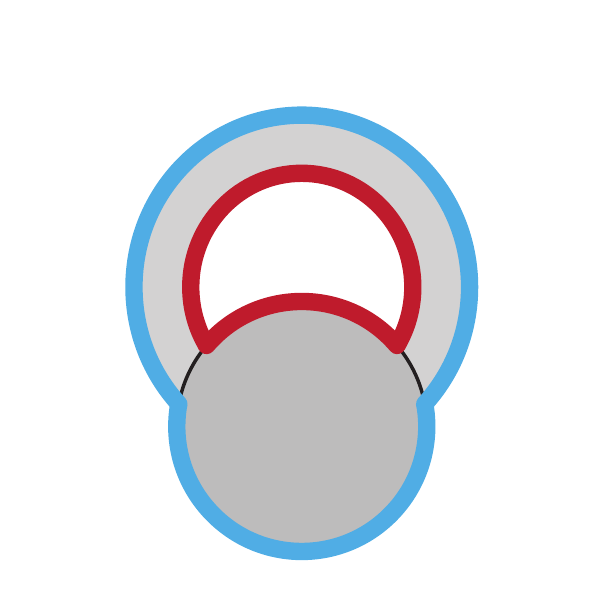}
&
\includegraphics[height=20mm]{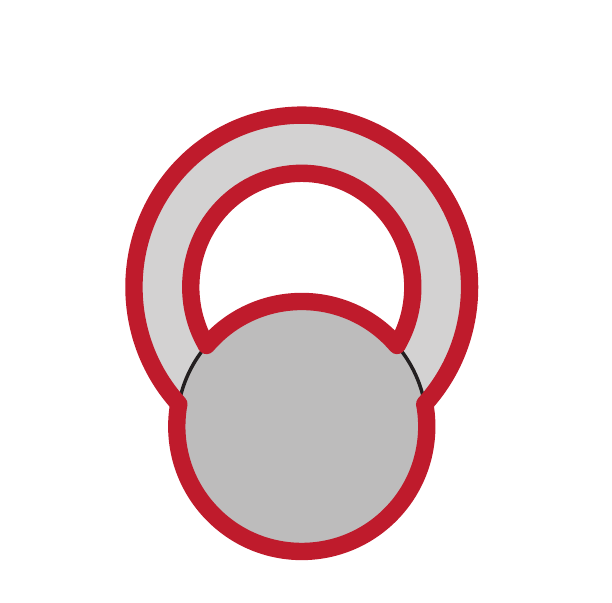}
&
\includegraphics[height=20mm]{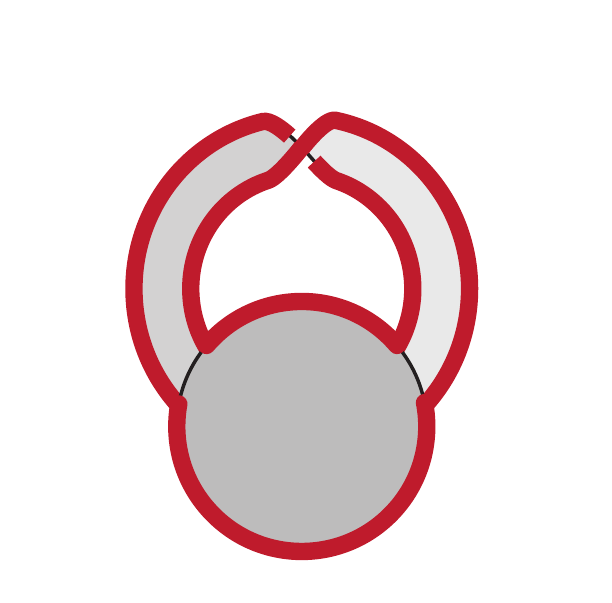}
\\
$\bs$&$\bp$&$\olc$&$\olh$&$\nl$
\end{tabular}
\caption{The coloured ribbon graphs $\bs,\bp,\olc, \olh,\nl$}
\label{fjl}
\end{figure}

For ease of notation we refer to these five coloured ribbon graphs, as in the figure, by 
\begin{itemize}
\item $\bs$ (bridge surface),
\item $\bp$ (bridge pseudo-surface),
\item $\olc$ (orientable loop cellular),
\item $\olh$ (orientable loop handle),
\item $\nl$ (non-orientable loop).
\end{itemize}

As before, we say that an edge $e$ is of \emph{type} $(i,j)$, for $i,j\in\{\bs,\bp,\olc, \olh,\nl\}$, if the pair \[(\G/ e^c,\G\ba e^c),\] is the pair $(i,j)$ after disregarding vertices. Let $a_{i}$ and $b_{j}$ be indeterminates. Set
\begin{equation}\label{fhuf}
P(\G) = 
  \begin{cases} 
   a_{i}P(\G\ba e)+b_{j}P(\G/e) & \text{if } e \text{ is of type }(i,j); \\
   \alpha^{n}\beta^m \gamma^v & \text{if } \G \text{ is edgeless, with }v \text{ vertices, } n\text{ vertex} 
   \\ & \quad \text{colour classes and  } m \text{ boundary colour classes}.
  \end{cases}
\end{equation}

The next step is to specialise the variables so that we obtain a well-defined deletion-contraction invariant. 
As it stands, \eqref{fhuf} does not lead to a well-defined recursion relation for a polynomial $P(\G)$ because the result depends on the order in which the edges of $\G$ are dealt with. 
This can be seen by applying, in the two different ways, the deletion-contraction relations to the ribbon graph consisting of one vertex,  one orientable loop $e$, and one non-orientable loop $f$, the loops in the cyclic order $efef$ at the vertex.
It can be observed in this example that setting $a_{\nl}=\sqrt{a_{\bp}a_{\olh}}$ and $b_{\nl}=\sqrt{b_{\bp}b_{\olh}}$ results in the two computations giving the same answer. In fact, we will see that imposing these conditions makes \eqref{fhuf} a well-defined recursion relation for a graph polynomial.

\begin{samepage}
\begin{theorem}\label{fdgu}
There is a well-defined function $U$ from the set of  coloured ribbon graphs to 
$\mathbb{Z}[ \alpha, \beta, \gamma, a_{\bs},a_{\bp}^{1/2},a_{\olc}, a_{\olh}^{1/2}, b_{\bs},b_{\bp}^{1/2},b_{\olc}, b_{\olh}^{1/2}]$
given by
\begin{equation}\label{efdgu}
U(\G) = 
  \begin{cases} 
   a_{i}U(\G\ba e)+b_{j}U(\G/e) & \text{if } e \text{ is of type }(i,j) \\
   \alpha^{n}\beta^{m} \gamma^v & \text{if } \G \text{ is edgeless, with } v \text{ vertices, } n\text{ vertex} 
   \\ & \quad \text{colour classes and  } m \text{ boundary colour classes}
  \end{cases}
\end{equation}
where, in the recursion,  $a_{\nl}=a_{\bp}^{1/2}a_{\olh}^{1/2}$ and $b_{\nl}=b_{\bp}^{1/2}b_{\olh}^{1/2}$.
\end{theorem}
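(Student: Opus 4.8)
The plan is to show that the recursion in \eqref{efdgu} is well-defined by the standard \emph{confluence} argument for deletion-contraction invariants: since any complete reduction of $\G$ to an edgeless coloured ribbon graph is obtained by repeatedly applying the two-term rule to edges in some order, it suffices to check that the value is independent of which edge we process first (a local confluence / diamond-lemma statement), and then invoke Newman's lemma (termination is immediate, as each step removes an edge). Concretely, fix a coloured ribbon graph $\G$ and two distinct edges $e,f$. We must show that expanding on $e$ then on $f$ gives the same element of the coefficient ring as expanding on $f$ then on $e$. Each order produces the four terms $\G\ba e\ba f$, $\G\ba e/f$, $\G/e\ba f$, $\G/e/f$ (deletion and contraction of distinct edges commute on ribbon graphs, and, by the explicit colour-class recipes in Section~\ref{s.4}, also on coloured ribbon graphs), so the two expansions agree \emph{provided} the product of the two coefficients picked up along each of the four branches is the same in both orders.

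The heart of the matter is therefore a bookkeeping statement about edge \emph{types}: if $e$ has type $(i,j)$ in $\G$ and type $(i',j')$ in $\G/f$, and $f$ has type $(k,\ell)$ in $\G$ and type $(k',\ell')$ in $\G\ba e$, etc., then the relevant products of $a$'s and $b$'s match. The key point is that the type of an edge $e$ is determined by the pair $(\G/e^c,\G\ba e^c)$; after removing all edges except $e$ and $f$ the only obstruction to ``type does not depend on the order'' is the genuinely order-dependent case, which is exactly the $efef$ configuration singled out before the theorem --- one orientable and one non-orientable loop interleaved at a single vertex. I would enumerate the possible two-edge coloured ribbon graphs (up to the symmetries that preserve types), observe that in every configuration \emph{except} the interleaved orientable/non-orientable loop pair the four branch-coefficients are literally the same monomials in the two orders, and then handle the exceptional configuration by direct computation: one order produces a coefficient involving $a_{\nl}^2$ (resp.\ $b_{\nl}^2$) while the other produces $a_{\bp}a_{\olh}$ (resp.\ $b_{\bp}b_{\olh}$), and the substitution $a_{\nl}=a_{\bp}^{1/2}a_{\olh}^{1/2}$, $b_{\nl}=b_{\bp}^{1/2}b_{\olh}^{1/2}$ is precisely what equates them. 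This is why the coefficient ring is taken to contain the square roots $a_{\bp}^{1/2},a_{\olh}^{1/2},b_{\bp}^{1/2},b_{\olh}^{1/2}$.

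The main obstacle --- and the step I expect to be the most technical --- is the exhaustive analysis of how an edge's type can change when a second edge is deleted or contracted, i.e.\ producing a clean finite case list and verifying it. One must track what happens to loops vs.\ non-loops, to the orientability of a loop's neighbourhood, and to the vertex- and boundary-colour classes of $\G/e^c$ and $\G\ba e^c$ under the other operation; the colour-class merging rules from Section~\ref{s.4} mean that ``global'' changes (two colour classes coalescing) can occur, so one has to confirm these do not spoil the matching of branch coefficients. I would organise this by the local pictures in Tables~\ref{tablecontractrg}, \ref{tablevp}, and \ref{tablefp}, reducing to finitely many local configurations of two edges meeting one or two vertices, and I would lean on the duality identities of Theorem~\ref{zxz} to cut the casework roughly in half (deletion/loop statements transport to contraction/doop statements). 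Once local confluence is established in all cases, Newman's lemma gives that $U$ is well-defined, and \eqref{efdgu} then holds by construction; I would close by remarking that setting all colour-class variables and the ``extra'' handle/pinch variables to trivial values recovers the classical Tutte polynomial of the underlying graph, consistent with the discussion preceding the theorem.
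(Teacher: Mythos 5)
Your route is genuinely different from the paper's. The paper does not attempt a confluence argument at all: it declares that Theorem~\ref{fdgu} ``will follow from Theorem~\ref{zxa}'', and Theorem~\ref{zxa} is proved by induction on $|E|$, showing that the explicit state sum $\sum_{A\subseteq E}\theta(\G,A)$ satisfies the recursion \eqref{efdgu} for \emph{every} choice of edge $e$ (via the rank bookkeeping of Lemma~\ref{rsf}). Order-independence is then automatic because the state sum makes no reference to an edge order --- this is exactly the classical strategy recalled in Section~\ref{s.1} for the ordinary Tutte polynomial. The trade-off is clear: the paper's route requires guessing the closed form in advance but then yields the state-sum expansion (needed later for universality, duality, and the comparison with the Krushkal and Bollob\'as--Riordan polynomials) as part of the same induction; your route needs no closed form but proves only well-definedness, so Theorem~\ref{zxa} would still have to be established separately. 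Your skeleton is sound, and one simplification is worth making explicit: writing the two expansions out, the coefficients of the mixed minors $\G\ba e/f$ and $\G/e\ba f$ agree \emph{automatically}, because the relevant type coordinates are read off from $\G/e^c$ and $\G\ba f^c$, which do not see whether the other edge was deleted or contracted first. Only the doubly-deleted and doubly-contracted terms impose conditions, and these are conditions on \emph{two different} two-edge minors: the $a$-identity lives on $\G/(E\ba\{e,f\})$ and the $b$-identity on $\G\ba(E\ba\{e,f\})$.

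The genuine gap is that the decisive step is asserted rather than carried out. Your claim that the interleaved orientable/non-orientable loop pair is ``exactly'' the only order-dependent configuration is precisely the content of the theorem, and it is not obvious: because deletion and contraction can merge vertex or boundary colour classes globally, one must rule out, for instance, configurations in which deleting $f$ coalesces the two boundary colour classes met by an orientable loop $e$ (which would change $e$'s coordinate from $\olc$ to $\olh$ on one side of the diamond but not the other). Such scenarios are excluded only by Euler-formula constraints on how many boundary components a two-edge ribbon graph can have, and none of that verification appears in your argument. Until the finite list of two-edge coloured ribbon graphs is actually enumerated and both identities checked on each (the duality of Theorem~\ref{zxz} does cut this roughly in half, as you say), the proof is a plan rather than a proof; the paper's Lemma~\ref{rsf} is, in effect, where this casework is discharged in closed form.
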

\end{samepage}
This theorem will follow from Theorem~\ref{zxa} below, in which we will prove that this function $U$ is well-defined by showing that it has a state-sum formulation. For this we will need some more notation. Recall that in a graph $G=(V,E)$ the \emph{rank function} is $r(G)=v(G) -k(G),$
where $v(G)$ denotes the number of vertices of $G$, and  $k(G)$  the number of components. Then for $A\subseteq E$, $r(A)$ is defined to be the rank of the spanning subgraph of $G$ with edge set $A$.

If $\G=(V,E)$ is a ribbon graph and $A\subseteq E$, then $r(\G)$, $r(A)$, $k(\G)$ and $k(A)$ are the parameters of its underlying abstract graph. The number of boundary components of $\G$ is denoted by $b(\G)$, and $b(A):=b(\G\ba A^c)$. $\G$ is \emph{orientable} if it is orientable when regarded as a surface, and the \emph{genus} of $\G$ is its genus when regarded as a surface. The \emph{Euler genus}, $\gamma(\G)$, of  $\G$ is the genus of $\G$ if $\G$ is non-orientable, and is twice its genus if $\G$ is orientable. $\gamma(A):= \gamma(\G \ba A^c)$. \emph{Euler's formula} is $\gamma(\G)=2k(\G)-|V|+|E|-b(\G)$, so \[ \gamma(A)=2k(A)-|V|+|A|-b(A).\]
Where there is any ambiguity over which ribbon graph we are considering we use a subscript, for example writing $r_{\G}(A)$.
\begin{definition}
For a ribbon graph $\G=(V,E)$, with $A\subseteq E$,
$$\rho(\G):=r(\G)+\tfrac{1}{2}\gamma(\G)$$
and
$$\rho(A):=\rho(\G\ba A^{c}).$$
\end{definition}

Observe that when $\G$ is of genus 0 we have $\rho(\G)=r(\G)$. Euler's formula can be used to show that 
\begin{equation}\label{e.vpa}
  \rho_{\G}(A) =  \tfrac{1}{2}\left( |A|+ |V| -b(A)\right).
\end{equation}

For the various coloured ribbon graphs, all these parameters refer to the underlying ribbon graph.

Let $\G$ be a coloured ribbon graph, and denote the vertex colouring by $\V$ and the boundary colouring by $\B$. Now we define the graph (not a ribbon graph) $\G/\V$ as follows. Its vertex set is the set of vertex colour classes, and its adjacency is induced from $\G$. Similarly, the graph $\G^*/\B$ has vertex set the set of boundary colour classes, and for each edge in $\G$ put an edge between the colour classes of its boundary components. We will consider the rank functions $r_{\G/\V}$ and $r_{\G^*/\B}$ of these graphs.

Note that $\G/\V$ can be formed by taking the corresponding graph embedded on a pseudo-surface.

\begin{figure}[ht]
\centering
\begin{tabular}{ccccc}
\includegraphics[height=40mm]{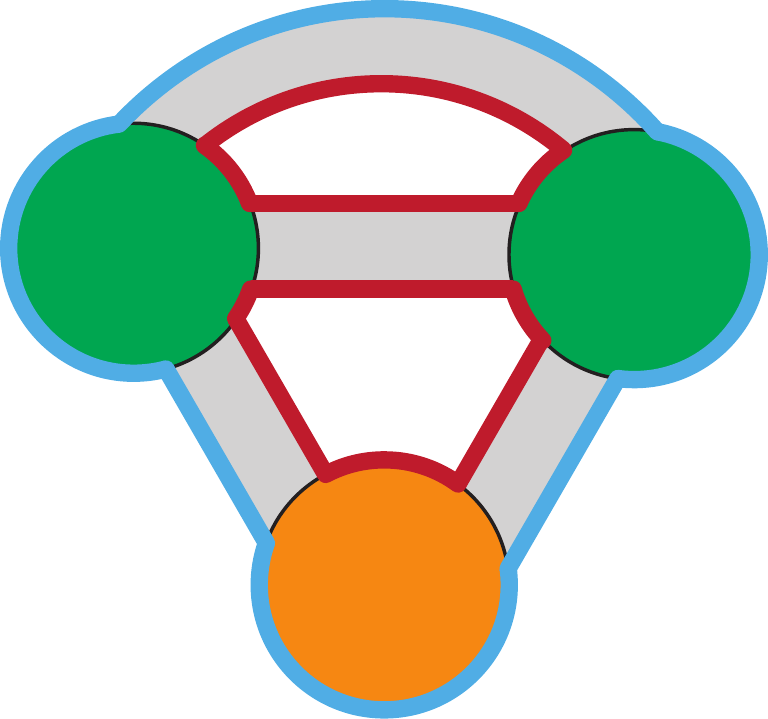} &&
\includegraphics[height=40mm]{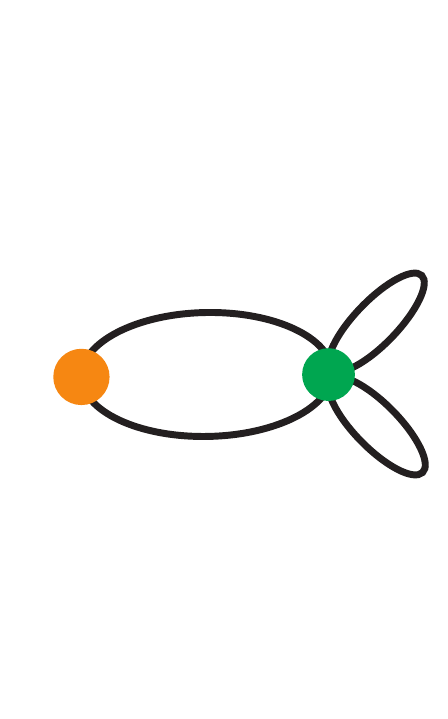} &&
\includegraphics[height=40mm]{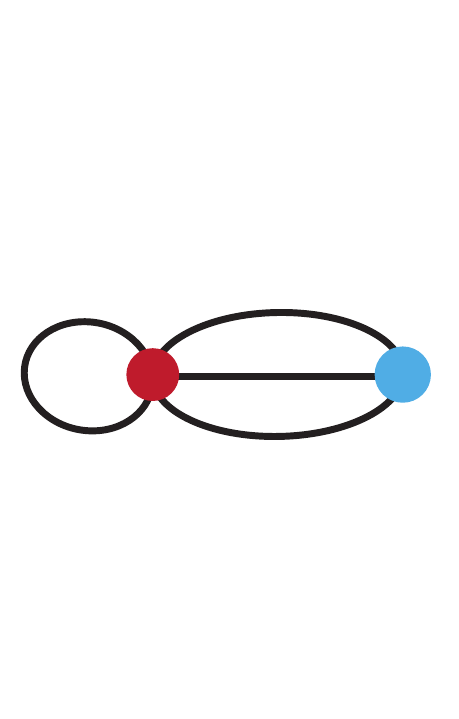} 
\\
$\G$ &\hspace{1cm}&$\G / \V$ &\hspace{1cm}& $\G^* / \B$
\end{tabular}
\caption{Forming $\G / \V$ and $\G^* / \B$ from $\G$}
\label{wgj}
\end{figure}

\begin{theorem}\label{zxa}
Let $\G=(V,E)$ be a coloured ribbon graph with vertex colouring $\V$ and boundary colouring $\B$, and let $U$ be defined as in Theorem~\ref{fdgu}. Then 
\begin{multline}\label{udd}
U(\G)=  
\alpha^{k(\G/\V)} \beta^{k(\G^*/\B)} \gamma^{v(\G)}
(\alpha \, a_{\bs})^{r_1(\G)} 
 a_{\bp}^{r_2(\G)} 
a_{\olc}^{r_3(\G)} 
 a_{\olh}^{r_4(\G)} 
\\
\sum_{A\subseteq E} 
\left(\frac{b_{\bs}}{\alpha \gamma\,  a_{\bs}}\right)^{r_1(A) }
\left(\frac{b_{\bp}}{\gamma\, a_{\bp}}\right)^{r_2(A)}
\left(\frac{\beta\gamma\,  b_{\olc}}{a_{\olc}}\right)^{r_3(A)}
\left(\frac{\gamma\, b_{\olh}}{a_{\olh}}\right)^{ r_4(A)}
\end{multline}
where
\begin{align*}
r_1(A) &:= r_{\G/\V} (A),    & r_3(A) &:=    r_{\G^*/\B} (E)-r_{\G^*/\B} (A^c),  \\
r_2(A) &:= \rho(A)-r_{\G/\V}(A),  & r_4(A) &:= |A| +r_{\G^*/\B} (A^c) - r_{\G^*/\B} (E) -\rho(A),
\end{align*}
and $r_i(\G):=r_i(E)$.
\end{theorem}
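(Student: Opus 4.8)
The plan is to prove Theorems~\ref{fdgu} and~\ref{zxa} together. Define $W(\G)$ to be the right-hand side of \eqref{udd}; since this is a finite sum over subsets with no choices involved, $W$ is manifestly a well-defined function of $\G$. The goal is then to show that $W$ satisfies the recursion \eqref{efdgu} (with $a_{\nl}=a_{\bp}^{1/2}a_{\olh}^{1/2}$, $b_{\nl}=b_{\bp}^{1/2}b_{\olh}^{1/2}$) \emph{for every} edge $e$ of $\G$, not merely for one chosen first. Granting this, any function obeying \eqref{efdgu} — in particular the one produced by deleting and contracting the edges of $\G$ in any fixed order — agrees with $W$ by a one-line induction on $|E(\G)|$; so the recursion is consistent, $U:=W$ is the function asserted by Theorem~\ref{fdgu}, and it equals the state sum \eqref{udd}. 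Membership of $W(\G)$ in the stated polynomial ring is then also automatic, since the recursion visibly preserves it.

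The first step is to put \eqref{udd} into a cleaner ``Whitney-rank'' shape. From the definitions $r_1(A)+r_2(A)=\rho(A)$ and $r_3(A)+r_4(A)=|A|-\rho(A)$, hence $-r_1(A)-r_2(A)+r_3(A)+r_4(A)=|A|-2\rho(A)=b(A)-|V|$ by \eqref{e.vpa}. Absorbing the prefactor of \eqref{udd} into the sum therefore gives
\begin{equation*}
W(\G)=\sum_{A\subseteq E}\alpha^{\,k(\G/\V)+r_1(\G)-r_1(A)}\,\beta^{\,k(\G^*/\B)+r_3(A)}\,\gamma^{\,b(A)}\prod_{t\in\{\bs,\bp,\olc,\olh\}}a_t^{\,r_t(\G)-r_t(A)}\,b_t^{\,r_t(A)},
\end{equation*}
where I write $(r_{\bs},r_{\bp},r_{\olc},r_{\olh})$ for $(r_1,r_2,r_3,r_4)$. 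The base case is now immediate: when $E=\emptyset$ the only term is $A=\emptyset$, all $r_t$ vanish, $b(\emptyset)=|V|=v(\G)$, and $k(\G/\V)$, $k(\G^*/\B)$ are the number $n$ of vertex colour classes and the number $m$ of boundary colour classes, so $W(\G)=\alpha^{n}\beta^{m}\gamma^{v}$.

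For the recursion, fix $e\in E(\G)$ and split $\sum_{A\subseteq E}=\sum_{A\subseteq E\ba e}+\sum_{A\ni e}$, re-indexing the second sum by $A\mapsto A\ba e\subseteq E(\G/e)$. Everything then reduces to transformation formulas describing how the quantities $k(\G/\V)$, $k(\G^*/\B)$, $v(\G)$, $b(A)$ and $r_1,r_2,r_3,r_4$ change under $\G\mapsto\G\ba e$ (on subsets avoiding $e$) and under $\G\mapsto\G/e$ (via $A\mapsto A\ba e$). These are read off from the local pictures of deletion and contraction on a ribbon graph and on its vertex and boundary colourings (Tables~\ref{tablecontractrg}, \ref{tablevp}, \ref{tablefp}), together with Euler's formula and \eqref{e.vpa}; the formulas for the boundary-side quantities $r_3,r_4,k(\G^*/\B)$ can be obtained from those for the vertex-side quantities $r_1,r_2,k(\G/\V)$ by passing to the dual via Theorem~\ref{zxz} (which interchanges $\ba e$ with $/e$ and $\V$ with $\B$), or checked directly from Table~\ref{tablefp}. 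Substituting these formulas and collecting the powers of the variables, the deletion part of the split assembles into $a_i\,W(\G\ba e)$ and the contraction part into $b_j\,W(\G/e)$ whenever $e$ has type $(i,j)$.

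The conceptual crux — and the reason the square roots enter — is the behaviour of a non-orientable loop: it contributes to neither $r_1$ nor $r_3$, but since removing a crosscap changes the Euler genus (hence $2\rho$) by $1$, it changes $\rho$, and so each of $r_2$ and $r_4=|A|-\rho-r_3$, by $\tfrac12$. A short bookkeeping computation then shows that if $e$ is a non-orientable loop, $W(\G)\big|_{A\ni e}=b_{\bp}^{1/2}b_{\olh}^{1/2}\,W(\G/e)$ and $W(\G)\big|_{A\not\ni e}=a_{\bp}^{1/2}a_{\olh}^{1/2}\,W(\G\ba e)$, i.e.\ exactly $b_{\nl}W(\G/e)$ and $a_{\nl}W(\G\ba e)$; a $\bp$-edge (resp.\ $\olh$-edge) carries a full unit of $r_2$ (resp.\ $r_4$), of which a non-orientable loop picks up precisely half of each — which is the relation forced by well-definedness. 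I expect the main obstacle to be organising the case analysis tidily: one must first pin down which pairs $(i,j)\in\{\bs,\bp,\olc,\olh,\nl\}^2$ actually arise as edge types (only a handful do, just as only three of the four patterns arise classically), and then verify for each that the transformation formulas produce the intended monomials $a_i$ and $b_j$. Duality should roughly halve the casework, and the non-orientable loop is the only case involving the $\tfrac12$'s; the rest is integer bookkeeping.
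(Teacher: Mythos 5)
Your proposal is correct and follows essentially the same route as the paper: the paper likewise proves well-definedness by showing the state sum (their $\theta(\G,A)$, summed over $A$) satisfies the recursion at an arbitrary edge, splitting into $e\notin A$ and $e\in A$ and invoking a lemma recording exactly the transformation formulas for $r_1,\dots,r_4$, $k(\G/\V)$, $k(\G^*/\B)$ and $v(\G)$ under deletion and contraction, with the half-integer shifts for $\nl$ producing the square roots. One small slip in your illustrative discussion: the deletion coefficient $a_{\nl}=\sqrt{a_{\bp}a_{\olh}}$ is governed by $\G/e^c=\nl$ (a non-orientable \emph{doop}), not by $e$ being a non-orientable loop; your general framework of checking each realisable type $(i,j)$ already handles this correctly.
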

To avoid interrupting the narrative, we have put the proof of this theorem in Section~\ref{tsr}. Theorem~\ref{fdgu} follows easily from this one.

The notation  $\G^*/\B$ and $\G/\V$ allows us to express the deletion-contraction relations of Theorem~\ref{fdgu} in a more convenient form as follows. 
\begin{theorem}[Deletion-contraction relations] \label{mnc}
The polynomial $U(\G)$  of Theorem~\ref{fdgu} is uniquely defined by the following deletion-contraction relations. 

\begin{equation}\label{sahj}
U(\G) = 
  \begin{cases} 
   f(e)\,U(\G\ba e)+g(e)\,U(\G/e) & \\
   \alpha^{n}\beta^{m} \gamma^v & \text{if } \G \text{ is edgeless, with } v \text{ vertices, } n\text{ vertex} 
   \\ & \quad \text{colour classes  and  } m \text{ boundary colour classes;}
  \end{cases}
\end{equation}
where
\[f(e) = 
\begin{cases}
a_{\bs} &  \text{if $e$ is an orientable doop in $\G$, a bridge in $\G/\V$} \\
a_{\bp} & \text{if $e$ is an orientable doop in $\G$, not a bridge in $\G/\V$}\\
a_{\olc}& \text{if  $e$ is not a loop in $\G^*/\B$, not a doop in $\G$}\\
a_{\olh}& \text{if $e$ is  a loop in $\G^*/\B$, not a doop in $\G$}\\
\sqrt{a_{\bp}a_{\olh}}& \text{if $e$ is a non-orientable doop in $\G$;}
\end{cases}\]
and 
\[g(e) = 
\begin{cases}
b_{\bs} &  \text{if $e$ is not a loop in $\G$, not a loop in $\G/\V$}\\
b_{\bp} & \text{if $e$ is not a loop in $\G$, a loop  in $\G/\V$}\\
b_{\olc}& \text{if $e$ is a bridge in $\G^*/\B$, an orientable loop in $\G$}\\
b_{\olh}& \text{if $e$ is not a bridge in $\G^*/\B$, an orientable loop in $\G$}\\
\sqrt{b_{\bp}b_{\olh}}& \text{if $e$ is a non-orientable loop in $\G$.}
\end{cases}\]

\end{theorem}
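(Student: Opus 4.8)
The plan is to recognise the relations \eqref{sahj} as nothing more than the recursion \eqref{efdgu} of Theorem~\ref{fdgu} rewritten in graph-theoretic language, in exactly the way that \eqref{hjad2} rewrites \eqref{hjad} in the classical setting. Since Theorem~\ref{fdgu} already provides a (unique) solution $U$ of \eqref{efdgu} whose iterated application terminates on edgeless coloured ribbon graphs, it then follows immediately that $U$ satisfies \eqref{sahj} and is the only function doing so. Thus everything reduces to the two identities $f(e)=a_{\G/e^{c}}$ and $g(e)=b_{\G\ba e^{c}}$, where $\G/e^{c}$ and $\G\ba e^{c}$ are interpreted, after discarding isolated vertices, as one of the five one-edge coloured ribbon graphs $\bs,\bp,\olc,\olh,\nl$ of Figure~\ref{fjl}, and where, as in Theorem~\ref{fdgu}, one reads $a_{\nl}=\sqrt{a_{\bp}a_{\olh}}$ and $b_{\nl}=\sqrt{b_{\bp}b_{\olh}}$.

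First I would identify the underlying uncoloured ribbon graphs. The ribbon graph $\G\ba e^{c}$ consists of $e$ together with its incident vertices, so topologically it is a disc, an annulus, or a M\"obius band according to whether $e$ is a non-loop, an orientable loop, or a non-orientable loop in $\G$. For $\G/e^{c}$ I would invoke Theorem~\ref{zxz} to write $\G/e^{c}=(\G^{*}\ba e^{c})^{*}$; then $\G^{*}\ba e^{c}$ is $e$ together with its incident vertices in $\G^{*}$, and since $e$ being an (respectively, non-orientable) doop in $\G$ means precisely that $e^{*}$ is an (respectively, non-orientable) loop in $\G^{*}$, this ribbon graph is a disc, an annulus, or a M\"obius band according to whether $e$ is not a doop, an orientable doop, or a non-orientable doop in $\G$. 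Taking the dual caps off the boundary components, which interchanges the disc and the annulus and preserves the M\"obius band, so $\G/e^{c}$ has the shape of $\bs$ or $\bp$ when $e$ is an orientable doop, the shape of $\olc$ or $\olh$ when $e$ is not a doop, and the shape of $\nl$ when $e$ is a non-orientable doop. This already accounts for the fifth lines of $f$ and of $g$ and matches the coarse four-way split of the remaining four lines of each.

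It remains, within the non-loop shape, to distinguish $\bs$ (the two relevant vertices in different colour classes) from $\bp$ (same class), and within the orientable-loop shape to distinguish $\olc$ (the two boundary components in different colour classes) from $\olh$ (same class). Two of these four refinements are immediate. Deletion never changes vertex colours, so when $e$ is a non-loop in $\G$ the two endpoints of $e$ lie in the same vertex colour class in $\G\ba e^{c}$ iff they do so in $\G$, i.e.\ iff $e$ is a loop in $\G/\V$; this gives the first two lines of $g$. Contraction never changes boundary colours, so when $e$ is not a doop the two boundary components of $\G/e^{c}=(\G^{*}\ba e^{c})^{*}$ carry the two boundary colours of $\G$ touched by $e$, hence lie in the same class iff $e$ is a loop in $\G^{*}/\B$; this gives the third and fourth lines of $f$. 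For the remaining two refinements the relevant colours genuinely merge, and I would argue via the dual and the colour-merging rules of Tables~\ref{tablevp} and~\ref{tablefp}. The two boundary components of $\G\ba e^{c}$ are the two vertices of $(\G\ba e^{c})^{*}=\G^{*}/e^{c}$, and their colour classes are obtained from the vertex colour classes of $\G^{*}$ (equivalently, the boundary colour classes of $\G$) by repeatedly applying the contraction-merging rule as the edges of $e^{c}$ are contracted in $\G^{*}$. The crucial observation is that contracting an edge identifies the colour classes of its two endpoints precisely when that edge is a non-loop at that moment, so processing all the edges of $e^{c}$, in any order, amounts to a union--find computation on the graph $\G^{*}/\B$ restricted to the edge set $e^{c}$; hence two boundary colour classes of $\G$ become identified iff they lie in the same component of $(\G^{*}/\B)\ba e$. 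Consequently the two boundary components of $\G\ba e^{c}$ share a colour class iff $e$ is not a bridge in $\G^{*}/\B$, which gives the third and fourth lines of $g$. The mirror-image argument, applied to $\G/e^{c}=(\G^{*}\ba e^{c})^{*}$ by deleting the edges of $e^{c}$ in $\G^{*}$, shows that the two vertices of $\G/e^{c}$ share a colour class iff $e$ is not a bridge in $\G/\V$, giving the first two lines of $f$. One then checks that the five alternatives in each of $f$ and $g$ are mutually exclusive and exhaustive, and assembling them recovers the displayed formulas.

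The step I expect to be the main obstacle is the colour bookkeeping in the previous paragraph: one must verify that the partition of colour classes resulting from contracting (respectively, deleting) all the edges of $e^{c}$ is independent of the order in which they are processed and is exactly given by connectivity in $\G^{*}/\B$ (respectively, $\G/\V$) --- in particular, that an edge of $e^{c}$ may switch between being a loop and a non-loop during the process without spoiling the union--find description. The remaining verifications are finite and elementary checks against the five one-edge coloured ribbon graphs of Figure~\ref{fjl} and the colour-merging Tables~\ref{tablevp} and~\ref{tablefp}.
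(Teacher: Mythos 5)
Your proposal is correct and follows essentially the same route as the paper: the paper's proof of Theorem~\ref{mnc} simply invokes Lemma~\ref{asr} (together with Proposition~\ref{bhj}), and your argument is in substance a proof of that lemma — identify the underlying one-edge ribbon graph from the loop/doop type, then determine the colour classes by commuting the quotients $\G/\V$ and $\G^*/\B$ with repeated deletion/contraction. The only cosmetic difference is that the paper packages the colour bookkeeping you flag as the main obstacle into the single-edge identities $(\G/f)/\V=(\G/\V)/f$ and their duals, verified against Table~\ref{tablevp}, and then iterates, rather than phrasing it as an order-independent union--find computation.
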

We postpone the proof of this theorem until Section~\ref{tsr}.

It is clear that, up to normalisation, there is some redundancy in the numbers of variables in \eqref{udd}, and four variables suffice. Each selection of four variables has its own advantages and disadvantages (for example, some lead to a smaller number of deletion-contraction relations). Here, motivated by the duality formula \eqref{eld} for the Tutte polynomial, we choose a form that gives the cleanest duality relation.
\begin{definition}\label{dhu}
Let $\G=(V,E)$ be a coloured ribbon graph with vertex colouring $\V$ and boundary colouring $\B$. Then 
\[  T_{ps}(\G;w,x,y,z):=  \sum_{A\subseteq E}   w^{r_1(E) -r_1(A) } x^{r_2(E)-r_2(A)} y^{r_3(A)}  z^{r_4(A)} ,  \]
is the \emph{Tutte polynomial of a coloured ribbon graph}. (Recall that graphs on pseudo-surfaces correspond to coloured ribbon graphs, and hence the subscript $ps$.)
Here
\begin{align*}
r_1(A) &:= r_{\G/\V} (A),    & r_3(A) &:=    r_{\G^*/\B} (E)-r_{\G^*/\B} (A^c),  \\
r_2(A) &:= \rho(A)-r_{\G/\V}(A),  & r_4(A) &:= |A| +r_{\G^*/\B} (A^c) - r_{\G^*/\B} (E) -\rho(A).
\end{align*}
The polynomial is in the ring $\mathbb{Z}[ w,x^{1/2},y,z^{1/2} ]$.
\end{definition}

\begin{theorem}[Universality] \label{sds}
Let $\mathcal{G}$ be a minor-closed class of coloured ribbon graphs. Then there is a unique map $U: \mathcal{G}\rightarrow \mathbb{Z}[\alpha, \beta, \gamma,  a_{\bs},a_{\bp}^{1/2},a_{\olc}, a_{\olh}^{1/2}, b_{\bs},b_{\bp}^{1/2},b_{\olc}, b_{\olh}^{1/2}]$ that satisfies \eqref{efdgu}.
Moreover,
\begin{multline*}
U(\G)=  
\alpha^{k(\G/\V)} \beta^{k(\G^*/\B)}\gamma^{v(\G)-\rho(\G)}
b_{\bs}^{r_1(\G)} 
b_{\bp}^{r_2(\G)}  
a_{\olc}^{r_3(\G)}
a_{\olh}^{r_4(\G)} 
 \\
T_{ps} \left(\G ; 
\frac{\alpha \gamma\, a_{\bs}}{b_{\bs}}, 
\frac{\gamma\,a_{\bp}}{b_{\bp}},
\frac{\beta\gamma\, b_{\olc}}{a_{\olc}},
\frac{\gamma\,b_{\olh}}{a_{\olh}} 
\right).
\end{multline*}
\end{theorem}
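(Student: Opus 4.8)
The plan is to derive both assertions from results already established: Theorem~\ref{fdgu} supplies a well-defined function $U$ on all coloured ribbon graphs satisfying \eqref{efdgu}, Theorem~\ref{zxa} records its state-sum \eqref{udd}, and Definition~\ref{dhu} introduces $T_{ps}$.

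\emph{Existence and uniqueness.} For existence I would restrict the function $U$ of Theorem~\ref{fdgu} to $\mathcal{G}$; since $\mathcal{G}$ is minor-closed, the graphs $\G\ba e$ and $\G/e$ occurring in \eqref{efdgu} again lie in $\mathcal{G}$, so the restriction is a genuine map $\mathcal{G}\to\mathbb{Z}[\alpha,\beta,\gamma,a_{\bs},a_{\bp}^{1/2},a_{\olc},a_{\olh}^{1/2},b_{\bs},b_{\bp}^{1/2},b_{\olc},b_{\olh}^{1/2}]$ satisfying \eqref{efdgu}. For uniqueness I would argue by strong induction on $|E(\G)|$: when $E(\G)=\emptyset$ the last line of \eqref{efdgu} forces $U(\G)=\alpha^{n}\beta^{m}\gamma^{v}$, and when $|E(\G)|\ge 1$, choosing any edge $e$ of type $(i,j)$, the first line of \eqref{efdgu} (with the conventions $a_{\nl}=a_{\bp}^{1/2}a_{\olh}^{1/2}$ and $b_{\nl}=b_{\bp}^{1/2}b_{\olh}^{1/2}$) expresses $U(\G)$ in terms of $U(\G\ba e)$ and $U(\G/e)$, both of which lie in $\mathcal{G}$, have fewer edges, and are already determined by the inductive hypothesis. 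Hence any two maps satisfying \eqref{efdgu} agree.

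\emph{The closed form.} I would set $w=\alpha\gamma\,a_{\bs}/b_{\bs}$, $x=\gamma\,a_{\bp}/b_{\bp}$, $y=\beta\gamma\,b_{\olc}/a_{\olc}$, $z=\gamma\,b_{\olh}/a_{\olh}$, so that $w^{-1}$, $x^{-1}$, $y$, $z$ are exactly the four bases appearing in the sum of \eqref{udd}. Since $r_i(\G)=r_i(E)$, Definition~\ref{dhu} gives
\[ T_{ps}(\G;w,x,y,z)=w^{r_1(\G)}x^{r_2(\G)}\sum_{A\subseteq E}w^{-r_1(A)}x^{-r_2(A)}y^{r_3(A)}z^{r_4(A)}, \]
so the sum appearing in \eqref{udd} equals $w^{-r_1(\G)}x^{-r_2(\G)}\,T_{ps}(\G;w,x,y,z)$. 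Substituting this into \eqref{udd}, the groups $(\alpha\,a_{\bs})^{r_1(\G)}w^{-r_1(\G)}$ and $a_{\bp}^{r_2(\G)}x^{-r_2(\G)}$ reduce to $b_{\bs}^{r_1(\G)}\gamma^{-r_1(\G)}$ and $b_{\bp}^{r_2(\G)}\gamma^{-r_2(\G)}$, leaving the prefactor $\alpha^{k(\G/\V)}\beta^{k(\G^*/\B)}\gamma^{v(\G)-r_1(\G)-r_2(\G)}b_{\bs}^{r_1(\G)}b_{\bp}^{r_2(\G)}a_{\olc}^{r_3(\G)}a_{\olh}^{r_4(\G)}$. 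Finally, since $r_1(\G)+r_2(\G)=r_{\G/\V}(E)+(\rho(E)-r_{\G/\V}(E))=\rho(\G)$, the $\gamma$-exponent collapses to $v(\G)-\rho(\G)$, and the claimed identity is read off directly.

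\emph{The main obstacle.} There is no serious conceptual obstruction, as Theorems~\ref{fdgu} and~\ref{zxa} carry the real weight; the step needing most care is the exponent bookkeeping in the closed-form calculation — verifying that the powers of $a_{\bs}$ and $a_{\bp}$ cancel against $w^{-r_1(\G)}$ and $x^{-r_2(\G)}$, and recognising $r_1(\G)+r_2(\G)=\rho(\G)$ so that the $\gamma$-exponent becomes $v(\G)-\rho(\G)$. It is also worth keeping track of where minor-closedness is used: precisely in keeping $\G\ba e$ and $\G/e$ inside $\mathcal{G}$ during the inductive uniqueness argument, and in making \eqref{efdgu} a well-formed condition on maps with domain $\mathcal{G}$.
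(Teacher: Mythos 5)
Your proposal is correct and follows exactly the route the paper intends: its proof of this theorem is the single line ``the result follows routinely from Definition~\ref{dhu} and Theorems~\ref{fdgu} and~\ref{zxa}'', and you have simply written out that routine verification (restriction plus induction on $|E|$ for existence and uniqueness, and the substitution $w=\alpha\gamma a_{\bs}/b_{\bs}$, etc., into the state-sum \eqref{udd} together with the identity $r_1(\G)+r_2(\G)=\rho(\G)$ for the closed form). The exponent bookkeeping checks out.
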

\begin{proof}
The result follows routinely from Definition~\ref{dhu} and  Theorems~\ref{fdgu} and~\ref{zxa}.
\end{proof}

\begin{theorem}[Duality] \label{sdr}
\[ T_{ps}(\G^*; w,x,y,z) = T_{ps}(\G; y,z,w,x)\]
\end{theorem}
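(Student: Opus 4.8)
The plan is to prove the identity directly from the state-sum in Definition~\ref{dhu}, using the involution $A\mapsto A^{c}$ on subsets of $E=E(\G)=E(\G^{*})$. Write $\widetilde r_{1},\widetilde r_{2},\widetilde r_{3},\widetilde r_{4}$ for the quantities of Definition~\ref{dhu} computed for $\G^{*}$ in place of $\G$. Since the vertex colouring of $\G^{*}$ is the boundary colouring of $\G$ and conversely, and $(\G^{*})^{*}=\G$ by Theorem~\ref{zxz}, the two auxiliary graphs are interchanged: $\G^{*}/\V_{\G^{*}}$ is the graph the paper denotes $\G^{*}/\B$, and $(\G^{*})^{*}/\B_{\G^{*}}$ is $\G/\V$. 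Consequently $\widetilde r_{1}(A)=r_{\G^{*}/\B}(A)$ and $\widetilde r_{3}(A)=r_{\G/\V}(E)-r_{\G/\V}(A^{c})=r_{1}(E)-r_{1}(A^{c})$, which already yields two of the four exponent matchings needed: $\widetilde r_{1}(E)-\widetilde r_{1}(A)=r_{3}(A^{c})$ and $\widetilde r_{3}(A)=r_{1}(E)-r_{1}(A^{c})$.

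For the other two matchings I would establish the duality identity
\[
\rho_{\G^{*}}(A)=|A|+\rho_{\G}(A^{c})-\rho_{\G}(E).
\]
By \eqref{e.vpa} applied to $\G^{*}$ (whose number of vertices equals $b(\G)$, the number of boundary components of $\G$), this is equivalent to the standard fact $b(\G^{*}\ba A^{c})=b(\G\ba A)$, which in turn follows from $\G^{*}\ba A^{c}=(\G/A^{c})^{*}$ (iterate Theorem~\ref{zxz}) together with the correspondence between boundary components of a ribbon graph and vertices of its dual. Granting this, Euler's formula and the definitions of $r_{2},r_{4}$ give, after a short computation, $\widetilde r_{2}(E)-\widetilde r_{2}(A)=r_{4}(A^{c})$ and $\widetilde r_{4}(A)=r_{2}(E)-r_{2}(A^{c})$ (here one also uses the case $A=E$, namely $\rho_{\G^{*}}(E)=|E|-\rho_{\G}(E)$).

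Assembling the four matchings,
\[
T_{ps}(\G^{*};w,x,y,z)=\sum_{A\subseteq E}w^{\,r_{3}(A^{c})}\,x^{\,r_{4}(A^{c})}\,y^{\,r_{1}(E)-r_{1}(A^{c})}\,z^{\,r_{2}(E)-r_{2}(A^{c})},
\]
and reindexing by $B=A^{c}$ turns the right-hand side into $\sum_{B\subseteq E}y^{\,r_{1}(E)-r_{1}(B)}z^{\,r_{2}(E)-r_{2}(B)}w^{\,r_{3}(B)}x^{\,r_{4}(B)}=T_{ps}(\G;y,z,w,x)$, as claimed.

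The main obstacle is bookkeeping: correctly identifying, after dualising, which of the auxiliary graphs $\G/\V$ and $\G^{*}/\B$ each $\widetilde r_{i}$ refers to, and verifying the $\rho$-duality identity $\rho_{\G^{*}}(A)=|A|+\rho_{\G}(A^{c})-\rho_{\G}(E)$ at the level of boundary-component counts of spanning ribbon subgraphs. Once that identity is in place the rest is elementary manipulation of ranks and Euler's formula; in particular no induction, and no appeal to the case analysis of Theorem~\ref{mnc}, is required.
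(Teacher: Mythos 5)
Your proof is correct, but it takes a genuinely different route from the one the paper gives. The paper proves Theorem~\ref{sdr} via the recursion: it defines $U^*(\G):=U(\G^*)$, uses Theorem~\ref{zxz} to show $U^*$ satisfies the same deletion--contraction relations with the roles of deletion and contraction (and hence of the $a_i$ and $b_j$) interchanged, notes that duality swaps the edge types $\bs\leftrightarrow\olc$ and $\bp\leftrightarrow\olh$, and then invokes universality. Your argument instead works directly with the state-sum of Definition~\ref{dhu} and the involution $A\mapsto A^c$; this is precisely the alternative the authors sketch in the remark immediately following the theorem, where they record the identities $r_{1,\G}(E)-r_{1,\G}(A)=r_{3,\G^*}(A^c)$ and $r_{2,\G}(E)-r_{2,\G}(A)=r_{4,\G^*}(A^c)$ without proof. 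Your exponent matchings are the same identities (read with $\G$ and $\G^*$ interchanged), and your reduction of the $\rho$-duality to $b_{\G^*\ba A^c}=b_{\G\ba A}$ via \eqref{e.vpa} checks out: both sides of $\rho_{\G^*}(A)=|A|+\rho_{\G}(A^c)-\rho_{\G}(E)$ equal $\tfrac12\bigl(|A|+b(\G)-b(\G\ba A)\bigr)$ once that boundary-component identity is granted, and that identity is indeed the standard one underlying Bollob\'as--Riordan duality (one clean justification: $v(\G/A^c)$ counts the boundary components of the spanning ribbon subgraph on $A^c$, which is $b(\G\ba A)$, while $b(\G^*\ba A^c)=b((\G/A^c)^*)=v(\G/A^c)$). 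What each approach buys: the paper's proof is shorter given that Theorems~\ref{zxz} and the universality theorem are already in place, and it explains the variable swap conceptually (duality exchanges edge types); yours is self-contained at the level of the rank functions, makes the exponent correspondences explicit, and avoids any appeal to the recursion or to well-definedness of $U$ --- at the cost of having to verify the $\rho$-duality, which the paper never states explicitly.
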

\begin{proof}
Consider $U(\G^*)$ as a map $U^*:\G\mapsto \G^* \mapsto U(\G^*)$. For the map $U^*$, and using Theorem~\ref{zxz}, we have 
\begin{align*}
U^*(\G) &= U(\G^*) \\
&=    a_{i}U(\G^*\ba e)+b_{j}U(\G^*/e)  \text{ if } e \text{ is of type }(i,j) \text{ in } \G^* \\
&=    a_{i}U((\G/ e)^*)+b_{j}U((\G\ba e)^*)  \text{ if } e \text{ is of type }(i,j) \text{ in } \G^* \\
&=    a_{i}U^*(\G/ e)+b_{j}U^*(\G\ba e)  \text{ if } e \text{ is of type }(i,j) \text{ in } \G^* \\
&=    a_{i}U^*(\G/ e)+b_{j}U^*(\G\ba e)  \text{ if } e \text{ is of type }(i^*,j^*) \text{ in }  \G.
\end{align*}
Since duality interchanges $\bs$ and $\olc$ edges, and $\bp$ and $\olh$ edges, the result follows by universality and specialising to $T_{ps}$.
\end{proof}

\begin{remark}
Theorem~\ref{sdr} can also be proven using the state-sums since 
$r_{1,\G}(E)-r_{1,\G}(A) =  r_{3,\G^*}(A^c) $, and 
$r_{2,\G}(E)-r_{2,\G}(A) =  r_{4,\G^*}(A^c) $.
\end{remark}

\section{The full family of Topological Tutte polynomials}\label{djkda}
We have just described, in Section~\ref{po}, the  Tutte polynomial of coloured ribbon graphs, or graphs in pseudo-surfaces. This is just one of the four minor-closed classes of topological graphs, as given in Table~\ref{rst}. In this section we describe the Tutte polynomials of the remaining classes of topological graphs. 

To obtain these polynomials one could either follow the approach of  Section~\ref{po} for each class of topological graph, or one could observe that the various ribbon graph classes are obtained from coloured ribbon graphs by forgetting the boundary colourings, vertex colourings, or both. (Algebraically, this would correspond to setting $\olc=\olh$, $\bs=\bp$, or both.) Accordingly we omit proofs from this section.

\subsection{The Tutte polynomial of ribbon graphs (or graphs cellularly embedded in surfaces)}

There are three connected ribbon graphs on one edge, shown in Figure~\ref{fwe} with the names we use for them,
\begin{figure}[ht]
\centering
\begin{tabular}{ccc}
\includegraphics[height=20mm]{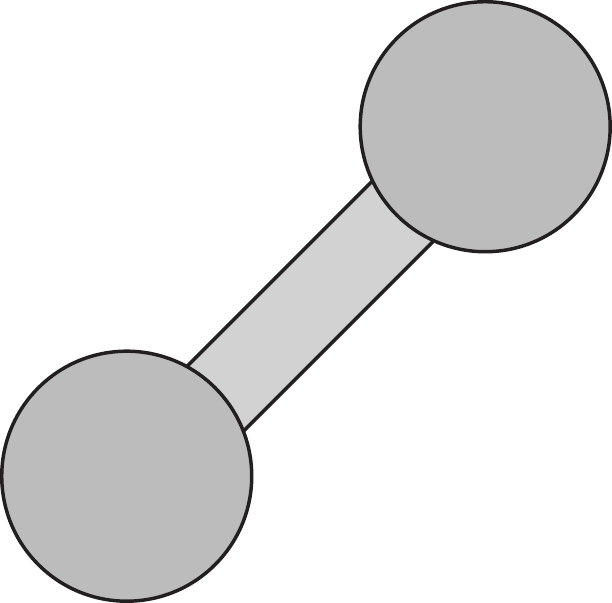}
&
\includegraphics[height=20mm]{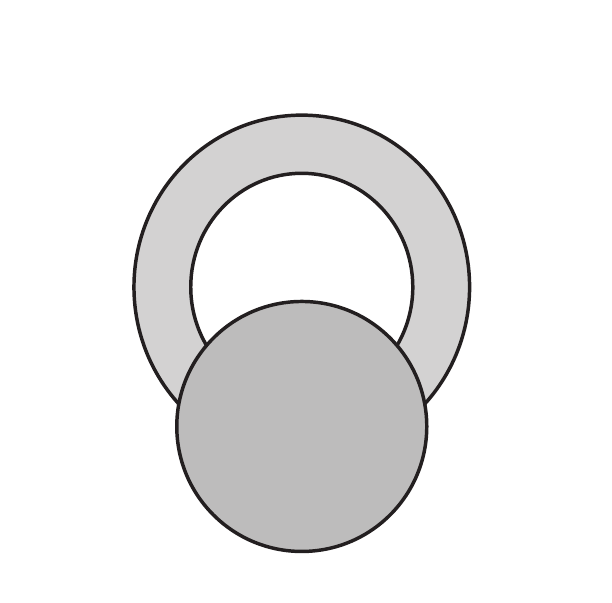}
&
\includegraphics[height=20mm]{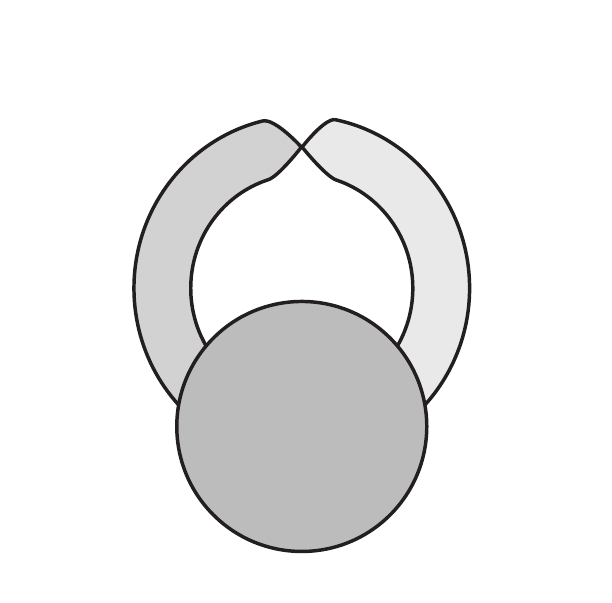}
\\
$\br$&$\ol$&$\nl$
\end{tabular}
\caption{The ribbon graphs $\br,\ol,\nl$}
\label{fwe}
\end{figure}
and every ribbon graph on one edge consists of one of these together with some number of isolated vertices.
An edge $e$ of a ribbon graph $\G$ is of type $(i,j)$, for $i,j\in\{\br,\ol,\nl\}$, if the pair $(\G/ e^c,\G\ba e^c)=(i,j)$ after disregarding isolated vertices.

Define a function $U$ from the set of  ribbon graphs to
$\mathbb{Z}[\gamma, a_{\br}^{1/2}, a_{\ol}^{1/2}, b_{\br}^{1/2},b_{\ol}^{1/2}]$
 by
\begin{equation}\label{zxzx}
U(\G) = 
  \begin{cases} 
   a_{i}U(\G\ba e)+b_{j}U(\G/e) & \text{if } e \text{ is of type }(i,j) \\
   \gamma^v & \text{if } \G \text{ is edgeless, with } v \text{ vertices} 
     \end{cases}
\end{equation}
where, in the recursion,  $a_{\nl}=a_{\br}^{1/2}a_{\ol}^{1/2}$ and $b_{\nl}=b_{\br}^{1/2}b_{\ol}^{1/2}$.

By Proposition~\ref{bhj} the deletion-contraction relations can be rephrased as
\[  
U(\G):= f(e) \,U(\G \ba e) + g(e)\, U(\G /e) ,
\]
where
\[
f(e) =  
\begin{cases}
a_{\br} & \text{if $e$ is an orientable doop} \\ 
a_{\ol} & \text{if $e$ is not a doop} \\ 
\sqrt{a_{\br}a_{\ol}} & \text{if $e$ is a non-orientable doop} \\
\end{cases}
\]
and\footnote{An unfortunate typo means that the  conditions for the first two cases of the following are transposed    in the published version of this paper.} 
\[
g(e) =  
\begin{cases}
b_{\br} & \text{if $e$ is not a loop} \\ 
b_{\ol} &  \text{if $e$ is an orientable loop}\\ 
\sqrt{b_{\br}b_{\ol}} & \text{if $e$ is a non-orientable loop} \\
 \end{cases}
\]

We have 
\begin{equation}\label{uddr}
U(\G)=  
 \gamma^{v(\G)}
a_{\br}^{\rho(\G)} 
a_{\ol}^{|E|-\rho(\G)} 
\sum_{A\subseteq E} 
\left(\frac{b_{\br}}{ \gamma\,  a_{\br}}\right)^{\rho(A) }
\left(\frac{\gamma\, b_{\ol}}{a_{\ol}}\right)^{ |A|-\rho (A)}.
\end{equation}

We then define the Tutte polynomial of  ribbon graphs or cellularly embedded graphs as follows.
 \begin{definition}
Let $\G=(V,E)$ be a  ribbon graph.  Then 
\[T_{cs}(\G;x,y) = 
\sum_{A\subseteq E(\G)} 
x^{\rho(\G)-\rho(A) }
y^{|A|-\rho(A)}
. \] is the \emph{Tutte polynomial of the ribbon graph} $\G$. 
\end{definition}

Note that $T_{cs}$ is the 2-variable Bollob\'as--Riordan polynomial. (We use the subscript $cs$ since ribbon graphs correspond to cellularly embedded graphs on surfaces.) See Section~\ref{dhjk} for details.

\begin{theorem}[Universality]
Let $\mathcal{G}$ be a minor-closed class of ribbon graphs. Then there is a unique map $U: \mathcal{G}\rightarrow \mathbb{Z}[\gamma, a_{\br}^{1/2}, a_{\ol}^{1/2}, b_{\br}^{1/2},b_{\ol}^{1/2}]$ that satisfies \eqref{zxzx}.
Moreover,
\[
U(\G)=  
 \gamma^{v(\G)-\rho(\G)}
b_{\br}^{\rho(\G)} 
a_{\ol}^{|E|-\rho(\G)} 
T_{cs}
\left(\frac{b_{\br}}{ \gamma\,  a_{\br}},\frac{\gamma\, b_{\ol}}{a_{\ol}}\right).
\]
\end{theorem}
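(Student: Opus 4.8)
The plan is to obtain everything by specialising the coloured-ribbon-graph results of Section~\ref{po}, exactly in the spirit of the proof of Theorem~\ref{sds}. There are two assertions: that a map $U$ satisfying the recursion \eqref{zxzx} exists and is unique on any minor-closed class $\mathcal{G}$ of ribbon graphs, and that it is given by the displayed closed formula. I would treat them in that order, with $T_{cs}$ playing the role that $T$ plays in Theorem~\ref{thm1}.

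\emph{Existence and uniqueness.} The first observation is that \eqref{zxzx} is precisely the image of the recursion \eqref{efdgu} of Theorem~\ref{fdgu} under the substitution $\alpha=\beta=1$, $a_{\bs}=a_{\bp}=a_{\br}$, $a_{\olc}=a_{\olh}=a_{\ol}$, $b_{\bs}=b_{\bp}=b_{\br}$, $b_{\olc}=b_{\olh}=b_{\ol}$, which is the algebraic counterpart of forgetting both colourings of a coloured ribbon graph (cf.\ the remark opening Section~\ref{djkda}). Indeed, the five single-edge coloured ribbon graphs $\bs,\bp,\olc,\olh,\nl$ have only three distinct underlying ribbon graphs (the $\br$-, $\ol$- and $\nl$-shapes: $\bs$ and $\bp$ share one, $\olc$ and $\olh$ another), so the type $(i,j)$ of an edge of a plain ribbon graph lives in $\{\br,\ol,\nl\}^2$; under the substitution the coefficients $a_{\bs},a_{\bp}$ collapse to $a_{\br}$, the coefficients $a_{\olc},a_{\olh}$ to $a_{\ol}$, and the relation $a_{\nl}=a_{\bp}^{1/2}a_{\olh}^{1/2}$ of Theorem~\ref{fdgu} becomes $a_{\nl}=a_{\br}^{1/2}a_{\ol}^{1/2}$ (and likewise for the $b$'s), which is exactly what \eqref{zxzx} demands. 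Since deletion and contraction of a coloured ribbon graph induce the same operations on the underlying ribbon graph (Corollary~\ref{gp}), the specialised coloured-ribbon-graph polynomial of Theorem~\ref{fdgu}, restricted to $\mathcal{G}$, is a map satisfying \eqref{zxzx}, establishing existence. Uniqueness follows in the standard way: repeated application of \eqref{zxzx} to the edges of any $\G\in\mathcal{G}$ — every intermediate object remaining in $\mathcal{G}$ by minor-closedness — expresses $U(\G)$ as a $\mathbb{Z}[\gamma,a_{\br}^{1/2},a_{\ol}^{1/2},b_{\br}^{1/2},b_{\ol}^{1/2}]$-linear combination of edgeless ribbon graphs, so $U$ is determined once one knows the answer is independent of the order in which the edges are processed, and that independence is the specialisation of Theorem~\ref{zxa} recorded in the state-sum \eqref{uddr}.

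\emph{The closed formula.} One can either specialise the universality formula of Theorem~\ref{sds} and note that $T_{ps}(\G;w,w,y,y)=T_{cs}(\G;w,y)$ (since $r_1(A)+r_2(A)=\rho(A)$ and $r_3(A)+r_4(A)=|A|-\rho(A)$), or work directly from \eqref{uddr}: substituting $T_{cs}(\G;x,y)=\sum_{A\subseteq E}x^{\rho(\G)-\rho(A)}y^{|A|-\rho(A)}$ and putting $x=\gamma a_{\br}/b_{\br}$, $y=\gamma b_{\ol}/a_{\ol}$ gives $\sum_{A}(b_{\br}/\gamma a_{\br})^{\rho(A)}(\gamma b_{\ol}/a_{\ol})^{|A|-\rho(A)}=x^{-\rho(\G)}\,T_{cs}(\G;x,y)$, and absorbing $x^{-\rho(\G)}=(b_{\br}/\gamma a_{\br})^{\rho(\G)}$ into the prefactor $\gamma^{v(\G)}a_{\br}^{\rho(\G)}a_{\ol}^{|E|-\rho(\G)}$ of \eqref{uddr} collapses it to $\gamma^{v(\G)-\rho(\G)}b_{\br}^{\rho(\G)}a_{\ol}^{|E|-\rho(\G)}$. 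This yields $U(\G)=\gamma^{v(\G)-\rho(\G)}b_{\br}^{\rho(\G)}a_{\ol}^{|E|-\rho(\G)}\,T_{cs}(\G;\gamma a_{\br}/b_{\br},\,\gamma b_{\ol}/a_{\ol})$, the arguments of $T_{cs}$ appearing in the form mirroring those of $T_{ps}$ in Theorem~\ref{sds}. Finally, the values lie in $\mathbb{Z}[\gamma,a_{\br}^{1/2},a_{\ol}^{1/2},b_{\br}^{1/2},b_{\ol}^{1/2}]$ because half-integer powers are introduced only through the $\nl$ case of the recursion.

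The whole argument is bookkeeping, and there is no genuine obstacle. The step that needs the most care is the first one: confirming that the five-type edge classification for coloured ribbon graphs really does descend to the three-type classification of the underlying ribbon graph under the stated substitution, so that the restricted polynomial is a legitimate solution of \eqref{zxzx}. This is already essentially contained in the discussion opening Section~\ref{djkda} and in Corollary~\ref{gp}, so it costs nothing in practice; everything after it is a direct substitution into \eqref{uddr}.
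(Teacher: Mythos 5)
Your proof is correct and follows exactly the route the paper itself prescribes: Section~\ref{djkda} explicitly omits these proofs, noting that they are obtained by specialising the coloured case (here $\alpha=\beta=1$, $a_{\bs}=a_{\bp}=a_{\br}$, $a_{\olc}=a_{\olh}=a_{\ol}$, likewise for the $b$'s, under which \eqref{udd} collapses to \eqref{uddr} since $r_1+r_2=\rho$ and $r_3+r_4=|A|-\rho$), and your existence/uniqueness and substitution arguments are the standard ones. One point worth recording: the closed formula you derive, with first argument $\gamma a_{\br}/b_{\br}$, is the correct one --- it is the version consistent with \eqref{uddr}, with the definition of $T_{cs}$, and with the pattern of Theorem~\ref{sds} --- whereas the printed statement's first argument $b_{\br}/(\gamma\, a_{\br})$ is inverted and appears to be a typo (note the paper already footnotes a transposition error elsewhere in this same subsection).
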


\begin{theorem}[Duality]
\[T_{cs}(\G;x,y)  =  T_{cs}(\G^*;y,x)  \]
\end{theorem}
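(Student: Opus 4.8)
The plan is to prove the duality relation $T_{cs}(\G;x,y) = T_{cs}(\G^*;y,x)$ directly from the state-sum definition, which is the cleanest route since the summation is manifestly order-independent. Recall that
\[
T_{cs}(\G;x,y) = \sum_{A\subseteq E(\G)} x^{\rho(\G)-\rho(A)} y^{|A|-\rho(A)},
\]
and that $E(\G^*) = E(\G)$ canonically, so the subsets $A\subseteq E(\G)$ are in bijection with the subsets $A^c\subseteq E(\G^*)$. The key step is therefore to establish the two arithmetic identities relating the exponents of $\G$ at $A$ to the exponents of $\G^*$ at $A^c$, namely
\[
\rho_\G(\G)-\rho_\G(A) = |A^c| - \rho_{\G^*}(A^c) \qquad\text{and}\qquad |A| - \rho_\G(A) = \rho_{\G^*}(\G^*) - \rho_{\G^*}(A^c).
\]
Once these hold, substituting $B = A^c$ and reindexing the sum gives $T_{cs}(\G;x,y) = \sum_{B\subseteq E(\G^*)} y^{|B|-\rho_{\G^*}(B)} x^{\rho_{\G^*}(\G^*)-\rho_{\G^*}(B)} = T_{cs}(\G^*;y,x)$, as desired.

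To prove the two arithmetic identities I would use the formula \eqref{e.vpa}, which states $\rho_\G(A) = \tfrac12(|A| + |V| - b_\G(A))$, together with the standard facts about geometric duality of ribbon graphs: $V(\G^*)$ is in bijection with $F(\G)$, so $|V(\G^*)| = b_\G(\G)$ (the number of boundary components of $\G$), and edges correspond. The crucial combinatorial input is that deleting a set of edges in $\G$ corresponds to contracting the dual set in $\G^*$, i.e. $\G\ba A^c$ and $\G^*/A^c$ are duals (this is the ribbon-graph analogue of Theorem~\ref{zxz}, and in particular $b_\G(A) = b_{\G\ba A^c}(\G\ba A^c) = v(\G^*/A^c)$). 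Chasing these relationships through \eqref{e.vpa} applied to both $\G$ and $\G^*$ should reduce each identity to Euler-characteristic bookkeeping; alternatively, one can simply note that these are exactly the specialisations $\olc=\olh$, $\bs=\bp$ of the identities $r_{1,\G}(E)-r_{1,\G}(A) = r_{3,\G^*}(A^c)$ and $r_{2,\G}(E)-r_{2,\G}(A) = r_{4,\G^*}(A^c)$ recorded in the remark following Theorem~\ref{sdr}, since under $\olc=\olh$ one has $r_3 + r_4 = \rho$ and under $\bs=\bp$ one has $r_1+r_2 = \rho$, so $\rho_\G(E)-\rho_\G(A)$ collapses to $|A^c|-\rho_{\G^*}(A^c)$ and vice versa.

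An alternative, arguably slicker approach avoids the state-sum entirely: invoke the Universality theorem just stated for ribbon graphs applied to the map $U^*(\G) := U(\G^*)$, exactly mirroring the proof of Theorem~\ref{sdr}. One checks that $U^*$ satisfies a deletion-contraction recursion of the form \eqref{zxzx} with the roles of $a$ and $b$ variables swapped and with $\br$-edges interchanged with $\ol$-edges (using $(\G\ba e)^* = \G^*/e$, $(\G/e)^* = \G^*\ba e$, and that duality turns a bridge into a loop and preserves/exchanges orientability types appropriately). Universality then forces the closed form for $U^*$, and specialising the variables to recover $T_{cs}$ yields the swap $x\leftrightarrow y$. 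I would likely present the state-sum proof as the main argument since it is self-contained and computational, and mention the universality proof as a remark.

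The main obstacle I anticipate is bookkeeping the non-orientability contributions correctly in the $\rho$ function — specifically, making sure that the Euler-genus term $\tfrac12\gamma(A)$ transforms correctly under duality, since $\rho$ mixes rank and half-genus. The clean way around this is precisely to work with \eqref{e.vpa}, $\rho_\G(A) = \tfrac12(|A|+|V|-b(A))$, which packages rank and genus together into boundary-component count, a quantity whose behaviour under duality ($b_\G(A) = v_{\G^*}(A^c)$, coming from $\G\ba A^c$ being dual to $\G^*/A^c$) is transparent. So the real content is just verifying that one dual relationship for ribbon graphs and then doing the substitution carefully.
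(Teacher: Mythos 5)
Your proposal is correct, but it does not follow the paper's (implicit) route. The paper gives no explicit proof of this theorem: it sits in the section whose proofs are declared omitted, the intended derivation being the specialisation $w=x$, $y=z$ of the four-variable duality $T_{ps}(\G^*;w,x,y,z)=T_{ps}(\G;y,z,w,x)$ of Theorem~\ref{sdr}, which is itself proved by exactly the universality argument you sketch as your alternative (via $(\G\ba e)^*=\G^*/e$ and the interchange of edge types under duality). Your primary argument instead works directly with the state sum, and correctly reduces the theorem to the two exponent identities $\rho_\G(E)-\rho_\G(A)=|A^c|-\rho_{\G^*}(A^c)$ and $|A|-\rho_\G(A)=\rho_{\G^*}(E)-\rho_{\G^*}(A^c)$; by \eqref{e.vpa} each of these is equivalent to the single fact $b_\G(A)=b_{\G^*}(A^c)$, i.e.\ that $(V(\G),A)$ and $(V(\G^*),A^c)$ are complementary subsurfaces of the capped-off surface and hence share their boundary circles. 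That is the one piece of genuine content, and your chain $b_\G(A)=v\bigl((\G\ba A^c)^*\bigr)=v(\G^*/A^c)$ stops one step short of it: you still need $v(\G^*/A^c)=b_{\G^*}(A^c)$, which follows because contraction preserves boundary components and fully contracting the ribbon subgraph $(V(\G^*),A^c)$ leaves one vertex per boundary component; alternatively, prove the complementary-subsurface statement directly. One further slip: in your aside on the remark after Theorem~\ref{sdr} you assert $r_3+r_4=\rho$, whereas $r_3(A)+r_4(A)=|A|-\rho(A)$; the conclusion you draw is nonetheless right, since summing the two identities of that remark gives precisely your first exponent identity and the second follows by applying it to $\G^*$ using $(\G^*)^*=\G$. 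With these small repairs your state-sum proof is complete and self-contained, and what it buys over the paper's route is independence from the coloured ribbon graph machinery of Theorems~\ref{zxa} and~\ref{sdr}.
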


\subsection{The Tutte polynomial of  boundary coloured ribbon graphs (or graphs  embedded in surfaces)}
There are four connected boundary coloured ribbon graphs on one edge, shown in Figure~\ref{fwef} with the names we use for them,
and every boundary coloured ribbon graph on one edge consists of one of these together with some number of isolated vertices.
An edge $e$ of a ribbon graph $\G$ is of type $(i,j)$, for $i,j\in\{\br,\olc,  \olh, \nl\}$, if the pair $(\G/ e^c,\G\ba e^c)=(i,j)$ after disregarding isolated vertices.

\begin{figure}[ht]
\centering
\begin{tabular}{cccc}
\includegraphics[height=20mm]{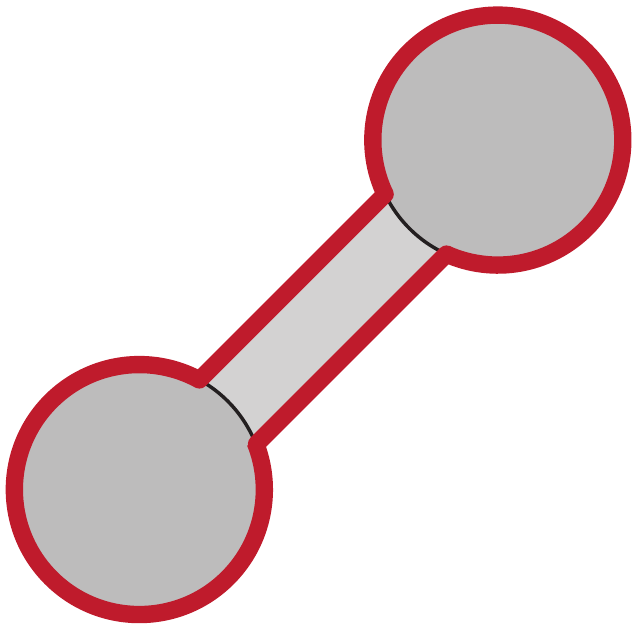}
&
\includegraphics[height=20mm]{figs/olc}
&
\includegraphics[height=20mm]{figs/olh}
&
\includegraphics[height=20mm]{figs/nl}
\\
$\br$&$\olc$&$\olh$&$\nl$
\end{tabular}
\caption{The boundary coloured ribbon graphs $\br,\olc,\olh,\nl$}
\label{fwef}
\end{figure}

Define a function $U$ from the set of boundary coloured ribbon graphs to the ring 
$\mathbb{Z}[  \beta, \gamma, a_{\br}^{1/2},a_{\olc}, a_{\olh}^{1/2},b_{\br}^{1/2},b_{\olc}, b_{\olh}^{1/2}]$
 by
\begin{equation}\label{ers}
U(\G) = 
  \begin{cases} 
   a_{i}U(\G\ba e)+b_{j}U(\G/e) & \text{if } e \text{ is of type }(i,j) \\
  \beta^{m} \gamma^v & \text{if } \G \text{ is edgeless, with } v \text{ vertices, }
   \\ & \quad  \text{and  } m \text{ boundary colour classes}
  \end{cases}
\end{equation}
where $a_{\nl}=a_{\br}^{1/2}a_{\olh}^{1/2}$ and $b_{\nl}=b_{\br}^{1/2}b_{\olh}^{1/2}$.

By Lemma~\ref{asr} the deletion-contraction relations can be rephrased as
\[  
U(\G):= f(e) \,U(\G \ba e) + g(e)\, U(\G /e) ,
\]
where
\[
f(e) =  
\begin{cases}
a_{\br} & \text{if $e$ is an orientable doop} \\ 
a_{\olc} & \text{if $e$  is not a loop in $\G^*/\B$ and not a doop in $\G$} \\ 
a_{\olh} & \text{if $e$ is  a loop in $\G^*/\B$ and not a doop in $\G$} \\ 
\sqrt{a_{\br}a_{\olh}} & \text{if $e$ is a non-orientable doop;} \\
\end{cases}
\]
and 
\[
g(e) =  
\begin{cases}
b_{\br} & \text{if $e$ is an orientable loop} \\ 
b_{\olc} & \text{if $e$ is a bridge in $\G^*/\B$ and  an orientable loop in $\G$} \\ 
b_{\olh} & \text{if $e$ is is not a bridge in $\G^*/\B$ and an orientable loop in $\G$} \\ 
\sqrt{b_{\br}b_{\olh}} & \text{if $e$ is a non-orientable loop.} \\
 \end{cases}
\]

We have 
\begin{equation}\label{plo}
U(\G)=  
 \beta^{k(\G^*/\B)} \gamma^{v(\G)}
a_{\br}^{\rho(\G)} 
a_{\olc}^{r_3(\G)} 
 a_{\olh}^{r_4(\G)} 
\sum_{A\subseteq E} 
\left(\frac{b_{\br}}{ \gamma\,  a_{\br}}\right)^{\rho(A) }
\left(\frac{\beta\gamma\,  b_{\olc}}{a_{\olc}}\right)^{r_3(A)}
\left(\frac{\gamma\, b_{\olh}}{a_{\olh}}\right)^{ r_4(A)}
\end{equation}
where
\begin{equation}\label{r3andr4}
 r_3(A) =    r_{\G^*/\B} (E)-r_{\G^*/\B} (A^c), 
\quad\text{and}\quad
 r_4(A) := |A| +r_{\G^*/\B} (A^c) - r_{\G^*/\B} (E) -\rho(A).
\end{equation}

We then define the Tutte polynomial of boundary coloured ribbon graphs or graphs embedded in surfaces as follows.
\begin{definition}\label{ghj}
Let $\G=(V,E)$ be a ribbon graph with  boundary colouring $\B$. Then 
\[  T_{s}(\G;w,x,y,z):=  \sum_{A\subseteq E}   x^{\rho(E) -\rho(A) }  y^{r_3(A)}  z^{r_4(A)} ,  \]
is the \emph{Tutte polynomial of a boundary coloured ribbon graph}.
Here $ r_3(A)$ and $ r_3(A)$ are as given in \eqref{r3andr4}.
\end{definition}

\begin{theorem}[Universality]
Let $\mathcal{G}$ be a minor-closed class of boundary coloured ribbon graphs. Then there is a unique map $U: \mathcal{G}\rightarrow \mathbb{Z}[  \beta, \gamma, a_{\br}^{1/2},a_{\olc}, a_{\olh}^{1/2},b_{\br}^{1/2},b_{\olc}, b_{\olh}^{1/2}]$ that satisfies \eqref{ers}.
Moreover,
\[
U(\G)=  
 \beta^{k(\G^*/\B)}\gamma^{v(\G)-\rho(\G)}
b_{\br}^{\rho(\G)} 
a_{\olc}^{r_3(\G)}
a_{\olh}^{r_4(\G)} 
 \;
T_{s} \left(\G ; 
\frac{ \gamma\, a_{\br}}{b_{\br}}, 
\frac{\beta\gamma\, b_{\olc}}{a_{\olc}},
\frac{\gamma\,b_{\olh}}{a_{\olh}} 
\right).
\]
\end{theorem}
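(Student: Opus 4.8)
The plan is to deduce the statement from the coloured-ribbon-graph results of Section~\ref{po} (Theorems~\ref{fdgu}, \ref{zxa}, \ref{sds}) by specialising along the operation of forgetting the vertex colouring. I would introduce the ring homomorphism $\phi$ from $\mathbb{Z}[\alpha,\beta,\gamma,a_{\bs},a_{\bp}^{1/2},a_{\olc},a_{\olh}^{1/2},b_{\bs},b_{\bp}^{1/2},b_{\olc},b_{\olh}^{1/2}]$ to $\mathbb{Z}[\beta,\gamma,a_{\br}^{1/2},a_{\olc},a_{\olh}^{1/2},b_{\br}^{1/2},b_{\olc},b_{\olh}^{1/2}]$ given on generators by $\alpha\mapsto 1$, $a_{\bs}\mapsto a_{\br}$, $a_{\bp}^{1/2}\mapsto a_{\br}^{1/2}$, $b_{\bs}\mapsto b_{\br}$, $b_{\bp}^{1/2}\mapsto b_{\br}^{1/2}$, and the identity on the remaining generators (so $\phi(a_{\nl})=a_{\br}^{1/2}a_{\olh}^{1/2}$ and $\phi(b_{\nl})=b_{\br}^{1/2}b_{\olh}^{1/2}$). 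For a boundary coloured ribbon graph $\G$, write $\hat\G$ for $\G$ equipped with the discrete vertex colouring and set $U_s(\G):=\phi(U(\hat\G))$, with $U$ as in Theorem~\ref{fdgu}.

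\emph{Existence and well-definedness.} Apply $\phi$ to the state-sum \eqref{udd}. Since $r_1(A)+r_2(A)=\rho(A)$ for all $A\subseteq E$ (directly from the definitions of $r_1,r_2$), the factors $(\alpha\,a_{\bs})^{r_1}a_{\bp}^{r_2}$ and $(b_{\bs}/\alpha\gamma a_{\bs})^{r_1}(b_{\bp}/\gamma a_{\bp})^{r_2}$ collapse under $\phi$ to $a_{\br}^{\rho}$ and $(b_{\br}/\gamma a_{\br})^{\rho}$, while $\alpha^{k(\G/\V)}\mapsto 1$; the image is precisely the right-hand side of \eqref{plo}, an expression involving only the ribbon graph and its boundary colouring. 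Hence $U_s(\G)$ does not depend on the vertex colouring chosen, and since \eqref{plo} is visibly independent of any edge ordering, $U_s$ is well-defined. That $U_s$ satisfies \eqref{ers} is then read off from \eqref{efdgu} applied to $\hat\G$: deletion leaves the discrete vertex colouring unchanged, whereas $\hat\G/e$ and $\widehat{\G/e}$ can differ only in their vertex colourings (when a loop contraction produces two vertices sharing a colour class), so by the colouring-independence just established $\phi(U(\hat\G/e))=U_s(\G/e)$; and because $\phi$ sends both $a_{\bs},a_{\bp}$ to $a_{\br}$ and both $b_{\bs},b_{\bp}$ to $b_{\br}$, the coloured edge-types $\bs,\bp$ merge into the single type $\br$, reproducing \eqref{ers} exactly (the base term $\alpha^v\beta^m\gamma^v$ becoming $\beta^m\gamma^v$).

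\emph{Uniqueness and the closed form.} Uniqueness is the usual induction on $|E(\G)|$: the base case of \eqref{ers} pins down $U$ on edgeless graphs, and for $|E(\G)|\ge 1$ any edge $e$ expresses $U(\G)$ through $U(\G\ba e)$ and $U(\G/e)$ on fewer edges, the choice of $e$ being immaterial by the well-definedness above. For the displayed formula, rewrite each summand of \eqref{plo} via $(b_{\br}/\gamma a_{\br})^{\rho(A)}=(b_{\br}/\gamma a_{\br})^{\rho(\G)}\,(\gamma a_{\br}/b_{\br})^{\rho(\G)-\rho(A)}$, pull the constant $(b_{\br}/\gamma a_{\br})^{\rho(\G)}$ out of the sum, recognise the remaining sum as $T_s(\G;\,\cdot\,,\tfrac{\gamma a_{\br}}{b_{\br}},\tfrac{\beta\gamma b_{\olc}}{a_{\olc}},\tfrac{\gamma b_{\olh}}{a_{\olh}})$ of Definition~\ref{ghj}, and combine prefactors via $\gamma^{v(\G)}a_{\br}^{\rho(\G)}(b_{\br}/\gamma a_{\br})^{\rho(\G)}=\gamma^{v(\G)-\rho(\G)}b_{\br}^{\rho(\G)}$.

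The only genuinely delicate point is the colouring-independence underpinning Step~1, namely that the possible discrepancy between $\hat\G/e$ and $\widehat{\G/e}$ becomes invisible after applying $\phi$; this rests on $r_1(A)+r_2(A)=\rho(A)$ together with the identifications $a_{\bs}=a_{\bp}$, $b_{\bs}=b_{\bp}$, $\alpha=1$, which jointly strip the vertex-colouring data from both the recursion and the state-sum. A self-contained alternative would be to reprove \eqref{plo} directly by induction on $|E(\G)|$, mirroring the proof of Theorem~\ref{zxa}, but the specialisation route is shorter now that Section~\ref{po} is in hand.
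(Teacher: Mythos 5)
Your proposal is correct and follows exactly the route the paper itself sanctions for this result: the paper omits the proofs in Section~\ref{djkda}, noting that they follow from the coloured case of Section~\ref{po} by forgetting the vertex colouring (algebraically, identifying $\bs$ with $\bp$), which is precisely the specialisation $\phi$ you carry out, with the key point --- that $r_1(A)+r_2(A)=\rho(A)$ makes the image of \eqref{udd} collapse to \eqref{plo} independently of $\V$ --- correctly identified and used to handle the discrepancy between $\hat\G/e$ and $\widehat{\G/e}$.
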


The dual of a boundary coloured ribbon graph is a vertex coloured ribbon graph and so $T_s$ cannot satisfy a three variable duality relation. However, it is related to the Tutte polynomial of a vertex coloured ribbon graph, $T_{cps}$ as defined below, through duality.
\begin{theorem}[Duality]
Let $\G$ be a vertex coloured ribbon graph. Then
\[ T_{s}(\G; x,y,z) = T_{cps}(\G^*; y,z,x).\]
\end{theorem}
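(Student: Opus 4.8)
The plan is to deduce this from the duality relation for $T_{ps}$ (Theorem~\ref{sdr}) by exhibiting both $T_s$ and $T_{cps}$ as single-variable specialisations of $T_{ps}$. (The statement is most naturally read with $\G$ a \emph{boundary} coloured ribbon graph, so that $T_s(\G)$ is defined and $\G^*$ is a vertex coloured ribbon graph; I adopt this reading.) Recall that a boundary coloured ribbon graph is a coloured ribbon graph with discrete vertex colouring, that a vertex coloured ribbon graph is one with discrete boundary colouring, and that geometric duality interchanges these two conditions (immediate from the definition of the dual coloured ribbon graph together with Theorem~\ref{zxz}); thus $\G\mapsto\G^*$ carries the boundary coloured class to the vertex coloured class, which is exactly what is needed for the two sides of the claimed identity to make sense.

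The two specialisations are checked from the state-sums in Definition~\ref{dhu}. Since $r_2(A)=\rho(A)-r_1(A)$ identically, setting $w=x$ in $T_{ps}(\G;w,x,y,z)$ collapses the first two factors of each term to $x^{\rho(E)-\rho(A)}$, giving $T_s(\G;x,y,z)=T_{ps}(\G;x,x,y,z)$ --- the algebraic shadow of ``forgetting the vertex colouring is setting $\bs=\bp$''. Dually, the $r_{\G^*/\B}$-contributions in the formulas for $r_3(A)$ and $r_4(A)$ cancel, so $r_3(A)+r_4(A)=|A|-\rho(A)$ identically; hence setting $z=y$ collapses the last two factors to $y^{|A|-\rho(A)}$, and since this is by construction (``forgetting the boundary colouring is setting $\olc=\olh$'') the defining state-sum of $T_{cps}$, we get $T_{cps}(\G;w,x,y)=T_{ps}(\G;w,x,y,y)$.

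Granting these, the identity is one line: for a boundary coloured ribbon graph $\G$,
\[
T_{cps}(\G^*;y,z,x)=T_{ps}(\G^*;y,z,x,x)=T_{ps}(\G;x,x,y,z)=T_s(\G;x,y,z),
\]
the first equality being the second specialisation applied to the vertex coloured ribbon graph $\G^*$, the second being Theorem~\ref{sdr} with $(w,x,y,z)=(y,z,x,x)$ there, and the third being the first specialisation. The only real work is the bookkeeping in the middle paragraph --- in particular pinning down the definition of $T_{cps}$ (given below) precisely enough to see it is the $z=y$ slice of $T_{ps}$, i.e.\ that forgetting the boundary colouring identifies the $\olc$- and $\olh$-variables --- after which the permutation of arguments matches automatically. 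A self-contained alternative avoids $T_{cps}$'s deferred definition: manipulate the state-sum for $T_s$ directly under $A\mapsto A^c$ using the identities $r_{1,\G}(E)-r_{1,\G}(A)=r_{3,\G^*}(A^c)$ and $r_{2,\G}(E)-r_{2,\G}(A)=r_{4,\G^*}(A^c)$ from the Remark after Theorem~\ref{sdr}, together with $r_1+r_2=\rho$ and $r_3+r_4=|\cdot|-\rho$.
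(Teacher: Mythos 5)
Your proposal is correct and follows exactly the route the paper intends: Section~\ref{djkda} omits the proof precisely because $T_s$ and $T_{cps}$ are the $w=x$ and $y=z$ specialisations of $T_{ps}$ (cf.\ Figure~\ref{fffs}), so the identity reduces to Theorem~\ref{sdr} applied at $(y,z,x,x)$. Your verification of the two specialisations via $r_1+r_2=\rho$ and $r_3+r_4=|A|-\rho$, and your reading of $\G$ as a boundary coloured ribbon graph (correcting the typo in the statement), are both right.
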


\subsection{The Tutte polynomial of vertex coloured ribbon graphs (or graphs cellularly embedded in pseudo-surfaces)}

There are four connected vertex coloured ribbon graphs on one edge, shown in Figure~\ref{fjlr} with the names we use for them,
and every vertex coloured ribbon graph on one edge consists of one of these together with some number of isolated vertices.
An edge $e$ of a ribbon graph $\G$ is of type $(i,j)$, for $i,j\in\{\bs,\bp,\ol,\nl\}$, if the pair $(\G/ e^c,\G\ba e^c)=(i,j)$ after disregarding isolated vertices.

\begin{figure}[ht]
\centering
\begin{tabular}{cccc}
\includegraphics[height=20mm]{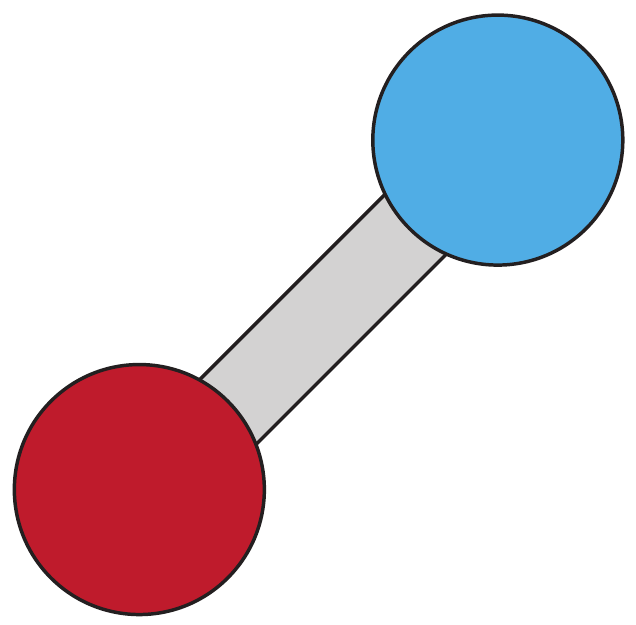}
&
\includegraphics[height=20mm]{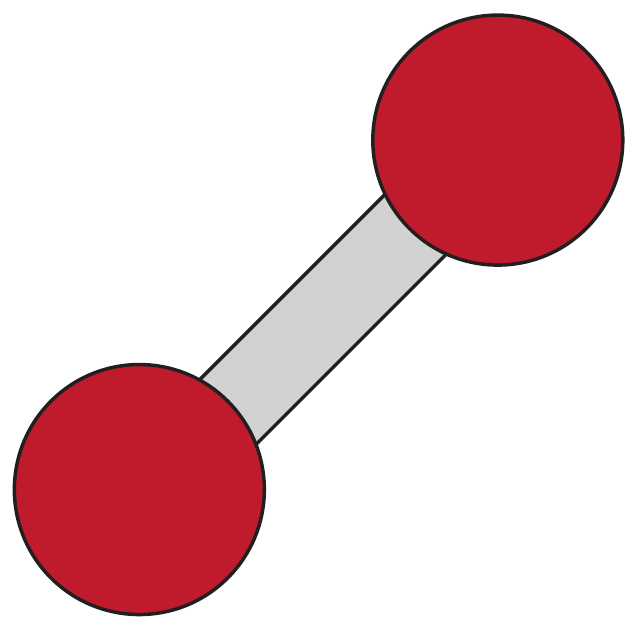}
&
\includegraphics[height=20mm]{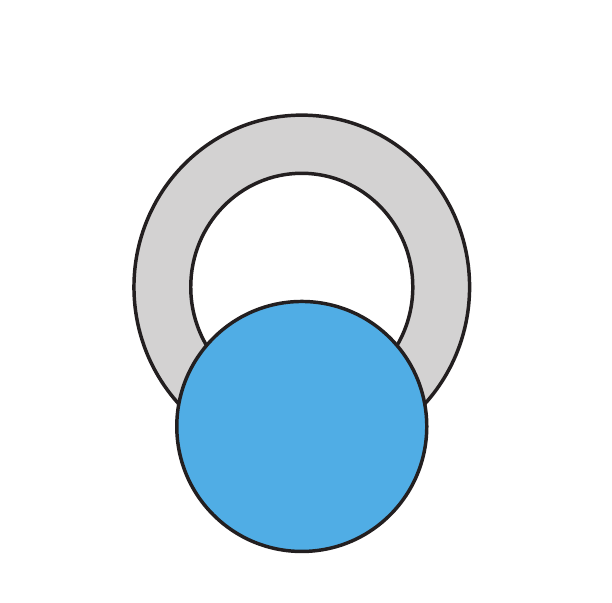}
&
\includegraphics[height=20mm]{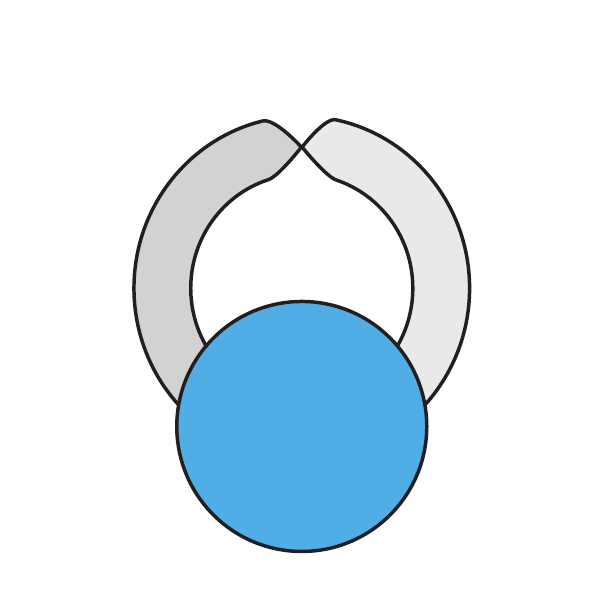}
\\
$\bs$&$\bp$&$\ol$&$\nl$
\end{tabular}
\caption{The vertex coloured ribbon graphs $\bs,\bp,\ol,\nl$}
\label{fjlr}
\end{figure}

Define a function $U$ on  ribbon graphs to
$\mathbb{Z}[ \alpha, \beta, \gamma, a_{\bs},a_{\bp}^{1/2}, a_{\ol}^{1/2}, b_{\bs},b_{\bp}^{1/2}, b_{\ol}^{1/2}]$
given by
\begin{equation}\label{rrg}
U(\G) = 
  \begin{cases} 
   a_{i}U(\G\ba e)+b_{j}U(\G/e) & \text{if } e \text{ is of type }(i,j) \\
   \alpha^{n} \gamma^v & \text{if } \G \text{ is edgeless, with } v  \\ & \quad \text{ vertices, and } n\text{ vertex colour classes  } 
  \end{cases}
\end{equation}
where   $a_{\nl}=a_{\bp}^{1/2}a_{\olh}^{1/2}$ and $b_{\nl}=b_{\bp}^{1/2}b_{\olh}^{1/2}$.

By Lemma~\ref{asr} the deletion-contraction relations can be rephrased as
\[  
U(\G):= f(e) \,U(\G \ba e) + g(e)\, U(\G /e) ,
\]
where
\[
f(e) =  
\begin{cases}
a_{\bs} & \text{if $e$ is an orientable doop in $\G$ and  a bridge in $\G/\V$} \\ 
a_{\bp} & \text{if $e$ is an orientable doop in $\G$ and  not a bridge in $\G/\V$} \\ 
a_{\ol} & \text{if $e$ is  not a doop in $\G$} \\ 
\sqrt{a_{\bp}a_{\ol}} & \text{if $e$ is a non-orientable doop,} \\
\end{cases}
\]
and 
\[
g(e) =  
\begin{cases}
b_{\bs} & \text{if $e$ is not a loop in $\G$ and not a loop in $\G/\V$} \\ 
b_{\bp} & \text{if $e$ is not a loop in $\G$ and a loop  in $\G/\V$} \\ 
b_{\ol} & \text{if $e$ is an orientable loop in $\G$} \\ 
\sqrt{b_{\bp}b_{\ol}} & \text{if $e$ is a non-orientable loop.} \\
 \end{cases}
\]

We have
\begin{multline*}
U(\G)=  
\alpha^{k(\G/\V)}  \gamma^{v(\G)}
(\alpha \, a_{\bs})^{r_1(\G)} 
 a_{\bp}^{r_2(\G)} 
a_{\ol}^{|E|-\rho(\G)}  \\
\sum_{A\subseteq E} 
\left(\frac{b_{\bs}}{\alpha \gamma\,  a_{\bs}}\right)^{r_1(A) }
\left(\frac{b_{\bp}}{\gamma\, a_{\bp}}\right)^{r_2(A)}
\left(\frac{\gamma\,  b_{\ol}}{a_{\ol}}\right)^{|A|-\rho(A)}
\end{multline*}
where 
\begin{equation}\label{r1andr2}
r_1(A)= r_{\G/\V} (A),  \quad\text{and}\quad 
r_2(A)= \rho(A)-r_{\G/\V}(A).
\end{equation}

We then define the Tutte polynomial of vertex coloured ribbon graphs or graphs cellularly embedded in pseudo-surfaces as follows.
\begin{definition}\label{fds}
Let $\G=(V,E)$ be a vertex coloured ribbon graph with  vertex colouring $\V$. Then 
\[T_{cps}(\G;w,x,y) = 
\sum_{A\subseteq E(\G)} 
 w^{r_1(E) -r_1(A) } x^{r_2(E)-r_2(A)} y^{|A|  -\rho(A)},  \]
where $r_1(A)$ and $r_2(A)$ are as given in \eqref{r1andr2}.
\end{definition}

\begin{theorem}[Universality]
Let $\mathcal{G}$ be a minor-closed class of vertex coloured ribbon graphs. Then there is a unique map $U: \mathcal{G}\rightarrow \mathbb{Z}[\alpha, \gamma,  a_{\bs},a_{\bp}^{1/2}, a_{\ol}^{1/2}, b_{\bs},b_{\bp}^{1/2}, b_{\ol}^{1/2}]$ that satisfies \eqref{rrg}.
Moreover,
\begin{multline*}
U(\G)=  
\alpha^{k(\G/\V)} \gamma^{v(\G)-\rho(\G)}
b_{\bs}^{r_1(\G)} 
 b_{\bp}^{r_2(\G)} 
a_{\ol}^{|E|-\rho(\G)} 
\\
\sum_{A\subseteq E} 
\left(\frac{\alpha \gamma\, a_{\bs}}{b_{\bs}}\right)^{r_1(\G)-r_1(A) }
\left(\frac{\gamma\, a_{\bp}}{b_{\bp}}\right)^{r_2(\G)-r_2(A)}
\left(\frac{\gamma\,  b_{\ol}}{a_{\ol}}\right)^{|A|-\rho(A)}.
\end{multline*}
\end{theorem}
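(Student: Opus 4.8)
The plan is to deduce this theorem from the corresponding results for coloured ribbon graphs (Theorems~\ref{fdgu}, \ref{zxa} and~\ref{sds}) by specialising variables, exactly as indicated at the start of Section~\ref{djkda}; one could alternatively repeat the analysis of Section~\ref{po} verbatim for vertex coloured ribbon graphs, but the specialisation route is shorter. The underlying observation is that a vertex coloured ribbon graph is a coloured ribbon graph in which the boundary colouring plays no role, and that algebraically `forgetting the boundary colouring' corresponds to the identifications $\olc=\olh=\ol$ together with $\beta=1$.

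Concretely, first I would introduce the ring homomorphism $\phi$ from $\mathbb{Z}[\alpha,\beta,\gamma,a_{\bs},a_{\bp}^{1/2},a_{\olc},a_{\olh}^{1/2},b_{\bs},b_{\bp}^{1/2},b_{\olc},b_{\olh}^{1/2}]$ to $\mathbb{Z}[\alpha,\gamma,a_{\bs},a_{\bp}^{1/2},a_{\ol}^{1/2},b_{\bs},b_{\bp}^{1/2},b_{\ol}^{1/2}]$ that sends $\beta\mapsto 1$, $a_{\olc}\mapsto a_{\ol}$, $a_{\olh}^{1/2}\mapsto a_{\ol}^{1/2}$, $b_{\olc}\mapsto b_{\ol}$, $b_{\olh}^{1/2}\mapsto b_{\ol}^{1/2}$, and fixes the remaining generators; note $\phi(a_{\nl})=\phi(a_{\bp}^{1/2}a_{\olh}^{1/2})=a_{\bp}^{1/2}a_{\ol}^{1/2}$, which is exactly the $\nl$-convention used in~\eqref{rrg}, and similarly for $b_{\nl}$. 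Given a vertex coloured ribbon graph $\G$, choose any boundary colouring $\B$ on its underlying ribbon graph and let $U(\G,\B)$ denote the coloured ribbon graph invariant of Theorem~\ref{fdgu}. The key point is that $\phi\bigl(U(\G,\B)\bigr)$ does not depend on $\B$: applying $\phi$ to the state-sum~\eqref{udd}, the factor $\beta^{k(\G^*/\B)}$ becomes $1$, while $a_{\olc}^{r_3(\G)}a_{\olh}^{r_4(\G)}$ collapses to $a_{\ol}^{|E|-\rho(\G)}$ and $(\beta\gamma\,b_{\olc}/a_{\olc})^{r_3(A)}(\gamma\,b_{\olh}/a_{\olh})^{r_4(A)}$ collapses to $(\gamma\,b_{\ol}/a_{\ol})^{|A|-\rho(A)}$, because $r_3(A)+r_4(A)=|A|-\rho(A)$; none of the surviving exponents ($r_1$, $r_2$, $\rho$, $|A|$, $v$, $k(\G/\V)$) involves $\B$. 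Write $V(\G)$ for this common value.

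Next I would check that $V$ satisfies the recursion~\eqref{rrg}. An edge $e$ of $\G$ has, as an edge of the coloured ribbon graph $(\G,\B)$, some type $(i,j)$ with $i,j\in\{\bs,\bp,\olc,\olh,\nl\}$; forgetting boundary colours collapses $\olc,\olh$ to $\ol$ and produces exactly the type $(\bar\imath,\bar\jmath)$ of $e$ as a vertex coloured ribbon graph edge, and one checks $\phi(a_i)=a_{\bar\imath}$ and $\phi(b_j)=b_{\bar\jmath}$ in all five cases (the square-root conventions being respected as above). Applying $\phi$ to the coloured recursion~\eqref{efdgu} and using the $\B$-independence just established therefore gives $V(\G)=a_{\bar\imath}V(\G\ba e)+b_{\bar\jmath}V(\G/e)$; applying $\phi$ to the base case $\alpha^{n}\beta^{m}\gamma^{v}$ gives $\alpha^{n}\gamma^{v}$, as required by~\eqref{rrg}. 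Since~\eqref{rrg} expresses $U(\G)$ in terms of graphs with fewer edges and is pinned down on edgeless graphs, an induction on $|E(\G)|$ shows there is at most one map satisfying it; hence $U=V$ both exists and is unique. Finally, for the `moreover' formula I would apply $\phi$ to~\eqref{udd}, perform the collapses above, and then do the standard bookkeeping: pulling $(b_{\bs}/\alpha\gamma\,a_{\bs})^{r_1(\G)}$ and $(b_{\bp}/\gamma\,a_{\bp})^{r_2(\G)}$ out of the sum and using $r_1(\G)+r_2(\G)=\rho(\G)$ rewrites the prefactor as $\alpha^{k(\G/\V)}\gamma^{v(\G)-\rho(\G)}b_{\bs}^{r_1(\G)}b_{\bp}^{r_2(\G)}a_{\ol}^{|E|-\rho(\G)}$ and turns the summand into the displayed form (which is $T_{cps}$ of Definition~\ref{fds} evaluated at the stated point).

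The routine but fiddly part is the variable bookkeeping in the last step, together with the verification that the collapsed coloured-graph edge types match the vertex coloured edge types and that $\phi$ respects the $\nl$ square-root conventions. The one genuinely structural point to get right is the $\B$-independence of $\phi\circ U(\cdot,\B)$; without it the specialised recursion would not close up on vertex coloured ribbon graphs. Everything else --- in particular the fact that iterated deletion--contraction terminates on edgeless graphs --- is inherited from the coloured ribbon graph case.
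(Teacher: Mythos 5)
Your proof is correct and follows exactly the route the paper indicates at the start of Section~\ref{djkda} (the paper itself omits the proof there): obtain the vertex coloured case from Theorems~\ref{fdgu} and~\ref{zxa} by forgetting boundary colourings, i.e.\ specialising $\beta=1$ and $a_{\olc}=a_{\olh}=a_{\ol}$, $b_{\olc}=b_{\olh}=b_{\ol}$. The key checks you supply --- that $r_3(A)+r_4(A)=|A|-\rho(A)$ makes the specialised state-sum independent of the choice of $\B$, that the collapsed edge types and $\nl$ square-root conventions match, and that $r_1(\G)+r_2(\G)=\rho(\G)$ gives the stated prefactor --- are all sound.
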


The dual of a vertex coloured ribbon graph is a boundary coloured ribbon graph and so $T_{cps}$ cannot satisfy a three variable duality relation. However, it is related to the Tutte polynomial of a boundary coloured ribbon graph, $T_{s}$ through duality.
\begin{theorem}[Duality]
Let $\G$ be a vertex coloured ribbon graph. Then
\[ T_{cps}(\G; w,x,y) = T_{s}(\G^*; y,w,x)\]
\end{theorem}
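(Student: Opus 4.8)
The plan is to deduce this identity from the duality for the Tutte polynomial of coloured ribbon graphs (Theorem~\ref{sdr}), in the spirit of the remark opening this section: both $T_{cps}$ and $T_{s}$ are specialisations of $T_{ps}$, and geometric duality interchanges the vertex and boundary colourings. The first step is to record two elementary identities among the exponents of Definition~\ref{dhu}: directly from the definitions of $r_1,\dots,r_4$ one has, for every $A\subseteq E$,
\[
r_1(A)+r_2(A)=\rho(A)\qquad\text{and}\qquad r_3(A)+r_4(A)=|A|-\rho(A).
\]
Hence, for any coloured ribbon graph $\h$,
\begin{align*}
T_{ps}(\h;w,x,y,y)
&=\sum_{A\subseteq E}w^{\,r_1(E)-r_1(A)}\,x^{\,r_2(E)-r_2(A)}\,y^{\,r_3(A)+r_4(A)}\\
&=T_{cps}(\h;w,x,y),
\end{align*}
and likewise $T_{ps}(\h;x,x,y,z)=T_{s}(\h;x,y,z)$; on both right-hand sides the colouring that has been forgotten genuinely plays no role, consistently with Definitions~\ref{fds} and~\ref{ghj}. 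In particular, given a vertex-coloured ribbon graph $\G$, write $\G^{\circ}$ for the coloured ribbon graph obtained by placing each boundary component of $\G$ in its own colour class; then $T_{cps}(\G;w,x,y)=T_{ps}(\G^{\circ};w,x,y,y)$.

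Next I would track these ``trivialising'' operations through geometric duality. By the definition of the dual of a coloured ribbon graph, the vertex colouring of $(\G^{\circ})^{*}$ is induced from the (all-singleton) boundary colouring of $\G^{\circ}$, hence is again all-singleton; and the boundary colouring of $(\G^{\circ})^{*}$ is induced from the vertex colouring $\V$ of $\G^{\circ}=\G$, transported onto the boundary components of $\G^{*}$ along the canonical bijection between the vertices of $\G$ and the boundary components of $\G^{*}$. That is exactly the boundary colouring that makes $\G^{*}$ the boundary-coloured ribbon graph dual to $\G$. Therefore $(\G^{\circ})^{*}=(\G^{*})^{\circ}$, where $(\G^{*})^{\circ}$ denotes $\G^{*}$ equipped with the trivial vertex colouring.

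With these in hand, the theorem follows from a short chain of equalities. Rewriting Theorem~\ref{sdr} as $T_{ps}(\h;w,x,y,z)=T_{ps}(\h^{*};y,z,w,x)$ and applying it with $\h=\G^{\circ}$ and $z=y$ gives
\begin{align*}
T_{cps}(\G;w,x,y)
&=T_{ps}(\G^{\circ};w,x,y,y)
=T_{ps}\bigl((\G^{\circ})^{*};y,y,w,x\bigr)\\
&=T_{ps}\bigl((\G^{*})^{\circ};y,y,w,x\bigr)
=T_{s}(\G^{*};y,w,x),
\end{align*}
where the last equality is the specialisation $T_{ps}(\h;a,a,b,c)=T_{s}(\h;a,b,c)$ applied to $\h=\G^{*}$ with $(a,b,c)=(y,w,x)$. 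This is the claimed identity.

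I expect the bookkeeping in the middle paragraph to be the only genuinely delicate point: checking that ``forget the boundary colouring, then dualise'' produces the same coloured ribbon graph as ``dualise, then forget the vertex colouring'', since this is what licenses reading the specialised $T_{ps}$ evaluated on $(\G^{\circ})^{*}$ as $T_{s}$ of the boundary-coloured ribbon graph $\G^{*}$. Everything else is a routine unwinding of the state-sum definitions. (The companion duality stated earlier in this section, relating $T_{s}$ and $T_{cps}$, is equivalent to the present one via $(\G^{*})^{*}=\G$ and needs no separate argument.)
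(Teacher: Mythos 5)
Your proof is correct, and it follows exactly the route the paper indicates for this section (the paper omits the proof, remarking that the results for the sub-classes follow from the coloured ribbon graph case by forgetting colourings, i.e.\ by specialising variables in $T_{ps}$ and invoking Theorem~\ref{sdr}). The one point that genuinely needs checking --- that forgetting the boundary colouring and then dualising agrees with dualising and then forgetting the vertex colouring, so that $(\G^{\circ})^{*}=(\G^{*})^{\circ}$ --- is handled correctly via the bijection between vertices of $\G$ and boundary components of $\G^{*}$.
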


\subsection{The Tutte polynomial of coloured ribbon graphs (or graphs embedded in pseudo-surfaces)}
This was discussed in Section~\ref{po}. However, for ease of reference we recall its definition:
\[  T_{ps}(\G;w,x,y,z):=  \sum_{A\subseteq E}   w^{r_1(E) -r_1(A) } x^{r_2(E)-r_2(A)} y^{r_3(A)}  z^{r_4(A)} ,  \]
where
\begin{align*}
r_1(A) &:= r_{\G/\V} (A),    & r_3(A) &:=    r_{\G^*/\B} (E)-r_{\G^*/\B} (A^c),  \\
r_2(A) &:= \rho(A)-r_{\G/\V}(A),  & r_4(A) &:= |A| +r_{\G^*/\B} (A^c) - r_{\G^*/\B} (E) -\rho(A).
\end{align*}

\subsection{Relating the four polynomials}
Since there is a hierarchy of ribbon graph structures given by forgetting particular types of colouring,  the resulting Tutte polynomials have a corresponding hierarchy given by specialisation of variables. See Figure~\ref{fffs}.

\begin{figure}[ht]
\begin{center}
\begin{tikzpicture}
\tikzset{node distance=2cm, auto}
  \node (A) {\framebox{coloured rg.}};
  \node (B) [below of=A] {};
  \node (C) [right of=B] {\framebox{vert. col. rg.}};
   \node (D) [left of=B] {\framebox{bound. col. rg.}};
    \node (E) [below of=B] {\framebox{ribbon graph}};
     \draw[->] (A) to node {} (D);
    \draw[->] (A) to node {} (C);
    \draw[<->, dashed] (D) to node {duality} (C);
   \draw[->] (D) to node {} (E);
  \draw[->] (C) to node {} (E);
\end{tikzpicture}
\hspace{20mm}
\begin{tikzpicture}
\tikzset{node distance=2cm, auto}
  \node (A) {$T_{ps}$};
  \node (B) [below of=A] {};
  \node (C) [right of=B] {$T_{cps}$};
   \node (D) [left of=B] {$T_{s}$};
    \node (E) [below of=B] {$T_{cs}$};
     \draw[->] (A) to node [swap] {$w=x$} (D);
    \draw[->] (A) to node {$y=z$} (C);
    \draw[<->, dashed] (D) to node {duality} (C);
   \draw[->] (D) to node [swap] {$y=z$} (E);
  \draw[->] (C) to node {$w=x$} (E);
\end{tikzpicture}
\end{center}
\caption{The hierarchy of ribbon graph structures and the corresponding hierarchy of polynomials}
\label{fffs}
\end{figure}
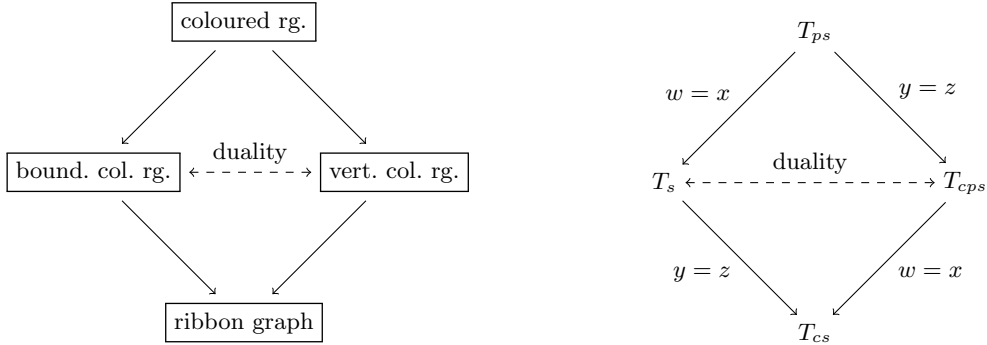

One might question why we should bother with $T_{cs}$, $T_s$ and $T_{cps}$ when they are all just special cases of $T_{ps}$. The philosophy we take here is not one of generalisation, but rather one of finding the correct Tutte polynomial to use for a given setting. So for example, if we have a property of cellularly embedded graphs that we wish to relate to a Tutte polynomial, then we should relate it to the Tutte polynomial for ribbon graphs, rather than the more general Tutte polynomial of coloured ribbon graphs. As a concrete illustration, by \cite{Mo08}, the Tutte polynomial of a ribbon graph can be obtained from the homfly polynomial of a link in a surface, but this result does not (or at least has not been) extended to links embedded in pseudo-surfaces. As an analogy, if we are interested in relating a property of graphs to the Tutte polynomial, then it makes sense to relate it to the classical Tutte polynomial of a graph, rather than the more general Tutte polynomial of, say, a delta-matroid.

\subsection{Relating to  polynomials in the literature}\label{dhjk}
The Tutte polynomials we have constructed have appeared in specialised or restricted forms in the literature. We give a brief overview of these 
connections here.  
The key observation in this direction is that upon setting $\alpha=\beta=\gamma=1$ the  topological Tutte polynomials presented here coincide with those obtained as  canonical Tutte polynomials of Hopf algebras from \cite{KMT} (see Remark~\ref{rem1}). This follows since both sets of invariants satisfy the same deletion-contraction relations.  Consequently we can identify our graph polynomials in that work.

From \cite{KMT}, let $G=(V,E)$ be a graph embedded in a surface $\Sigma$ equipped with a vertex partition $\V$, and let $A\subseteq E$. 
Using $N(X)$ to denote a regular neighbourhood of a subset  $X$ of $\Sigma$, set 
\begin{equation}\label{e.vgs1}
\kappa(A):= \#\mathrm{components}(\Sigma  \backslash N(V \cup  A))  - \#\mathrm{components}(\Sigma ).
\end{equation}
Introduced in \cite{KMT}, the \emph{Krushkal Polynomial} of a vertex partitioned graph in a surface  is 
\[
\widetilde{K}_{(G\subset \Sigma, \mathcal{V})}(x,y,a,b):=\sum_{A\subseteq E(G)}x^{r_{\G/\V}(E)-r_{\G/\V}(A)}y^{\kappa(A)}a^{\rho(A)-r_{\G/\V}(A)}b^{|A|-\rho(A)-\kappa(A)}.
\]
Also introduced in  \cite{KMT}, the \emph{Bollob\'as--Riordan polynomial} of a ribbon graph $\G$ with vertex partition $\V$ is 
\[R_{ ( \G,\mathcal{V})}(x,y,z) := \sum_{A\subseteq E(G)}  (x-1)^{  r_{\G/\V}(E)-   r_{\G/\V}(A) }  y^{|A|-r_{\G/\V}(A)} z^{2( \rho(A)-r_{\G/\V}(A) )}.\]

The above two polynomials are generalisations of the far better-known Krushkal and Bollob\'as--Riordan polynomials.

Introduced in \cite{BR01,BR02}, the \emph{Bollob\'as--Riordan polynomial} of a ribbon graph $\G=(V,E)$ is defined by 
\[ R_{\G}(x,y,z):= \sum_{A \subseteq E}   (x-1)^{r( G ) - r( A )}   y^{|A|-r(A)} z^{\gamma(A)}.\] 

For a graph $G=(V,E)$ embedded in a surface $\Sigma$ (but not necessarily cellularly embedded) the {\em Krushkal polynomial}, introduced by S.~Krushkal in \cite{Kr} for graphs in orientable surfaces,  and extended by C.~Butler in \cite{But} to graphs in non-orientable surfaces, is defined by
\begin{equation}\label{d.kru}
 K_{G\subset \Sigma} ( x,y,a,b ) :=  \sum_{A\subseteq E}  x^{r(G)-r(A)} y^{ \kappa(A)} a^{ \frac{1}{2}s(A)} b^{ \frac{1}{2} s^{\perp}(A)},
   \end{equation}
 where   $s(A):= \gamma(N(V \cup A))$,  $s^{\perp}(A) := \gamma(\Sigma \backslash N(V \cup A))$, and $\kappa$ is as in \eqref{e.vgs1}.
 
Note that we use here the form of the exponent of $y$ from the proof of Lemma~4.1 of \cite{ACEMS}  rather than the homological definition given in \cite{Kr}.

When $\V$ assigns each vertex to its own part of the partition $R_{ ( \G,\mathcal{V})}(x,y,z) =   R_{\G}(x,y,z)$  and $K_{G\subset \Sigma}(x,y,a,b)  = b^{\tfrac{1}{2}\gamma(\Sigma)} \widetilde{K}_{(G\subset \Sigma, \mathcal{V})}(x,y,a,1/b)$. (See \cite{KMT} for details.)

\begin{theorem}\label{nm}
The following hold. 
\begin{enumerate}
\item For a vertex partitioned graph in a surface $(G\subset \Sigma, \mathcal{V})$ and its corresponding coloured ribbon graph $\G$,
  \[\widetilde{K}_{(G\subset \Sigma, \mathcal{V})}(x,y,a,b) = a^{r_2(\G)}  T_{ps} \left(\G; x,\tfrac{1}{a},y,b\right) .\]
\item  For a ribbon graph $\G$ with vertex partition $\V$,
\[  R_{ ( \G,\mathcal{V})}(x,y,z) = (yz^2)^{r_2(\G)} T_{cps} \left(  \G; x-1, \tfrac{1}{yz^2} ,y \right).   \]
\item \label{adsfg}  For a ribbon graph $\G$,
\[   x^{\tfrac{1}{2} \gamma(\G)} R\left(x+1,y, \tfrac{1}{\sqrt{xy}}\right) =T_{cs}(\G;x,y). \]
\end{enumerate}
\end{theorem}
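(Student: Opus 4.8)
The plan is to establish all three equalities by a direct comparison of state sums, the only genuinely structural input being the relation $\rho(A)=r(A)+\tfrac{1}{2}\gamma(A)$ (see the definition of $\rho$, or \eqref{e.vpa}) and, for part~(1) alone, an identification of $\kappa(A)$ with a rank function on the dual side. In each case the recipe is the same: substitute the prescribed values of the variables into the state sum of the polynomial from the literature, multiply through by the indicated monomial, and collect the exponent of each variable in the general term. Since the index set of the sum is $2^{E}$ on both sides, it is enough to check that the exponents agree for every $A\subseteq E$.

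For part~(3): putting $x\mapsto x+1$ and $z\mapsto(xy)^{-1/2}$ into $R_{\G}(x,y,z)=\sum_{A\subseteq E}(x-1)^{r(\G)-r(A)}y^{|A|-r(A)}z^{\gamma(A)}$ makes the general term $x^{r(\G)-r(A)}y^{|A|-r(A)}(xy)^{-\gamma(A)/2}$; multiplying by $x^{\gamma(\G)/2}$ and using $\rho=r+\tfrac{1}{2}\gamma$, the exponent of $x$ becomes $\rho(\G)-\rho(A)$ and that of $y$ becomes $|A|-\rho(A)$, which is exactly the general term of $T_{cs}(\G;x,y)$. Part~(2) is the same bookkeeping with no topology involved: substituting $w\mapsto x-1$, $x\mapsto(yz^{2})^{-1}$ into $T_{cps}$ and multiplying by $(yz^{2})^{r_{2}(\G)}$ turns the $(yz^{2})$-power into $(yz^{2})^{r_{2}(A)}$, and then $r_{1}(A)=r_{\G/\V}(A)$ and $r_{2}(A)=\rho(A)-r_{\G/\V}(A)$ let one read off the exponents $r_{\G/\V}(E)-r_{\G/\V}(A)$ of $x-1$, $|A|-r_{\G/\V}(A)$ of $y$, and $2(\rho(A)-r_{\G/\V}(A))$ of $z$, i.e.\ the general term of $R_{(\G,\V)}(x,y,z)$.

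Part~(1) carries the real content. Substituting $w\mapsto x$, $x\mapsto a^{-1}$, $y\mapsto y$, $z\mapsto b$ into $T_{ps}$ and multiplying by $a^{r_{2}(\G)}$ produces a state sum with general term $x^{r_{1}(\G)-r_{1}(A)}a^{r_{2}(A)}y^{r_{3}(A)}b^{r_{4}(A)}$; the $x$- and $a$-exponents already coincide with those of $\widetilde{K}_{(G\subset\Sigma,\V)}$ because $r_{1}(A)=r_{\G/\V}(A)$ and $r_{2}(A)=\rho(A)-r_{\G/\V}(A)$. Everything therefore reduces to the single claim
\[
\kappa(A)=r_{\G^{*}/\B}(E)-r_{\G^{*}/\B}(A^{c})=r_{3}(A),
\]
since then $r_{4}(A)=|A|+r_{\G^{*}/\B}(A^{c})-r_{\G^{*}/\B}(E)-\rho(A)=|A|-\rho(A)-\kappa(A)$ matches the $b$-exponent of $\widetilde{K}$ automatically. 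I expect this identity to be the main obstacle: it asserts that the number of extra components produced in $\Sigma\setminus N(V\cup A)$ equals the corank of $A^{c}$ in the graph $\G^{*}/\B$ obtained by contracting the boundary colour classes. The argument I would give tracks how the boundary colouring $\B$ of the ribbon graph $N(G)$ records which boundary components of $N(V\cup A)$ bound a common region of $\Sigma\setminus N(G)$, so that the components of $\Sigma\setminus N(V\cup A)$ are in bijection with the connected components of the subgraph of $\G^{*}/\B$ on edge set $A^{c}$; this is exactly the computation underlying Lemma~4.1 of \cite{ACEMS} and the identification of the Krushkal polynomial in \cite{KMT}. As an alternative to the neighbourhood argument one can use the observation preceding the theorem: with $\alpha=\beta=\gamma=1$ the map $U$ of Theorem~\ref{fdgu} and its analogues satisfy the same deletion-contraction relations as the canonical Tutte polynomials of \cite{KMT}, so it suffices to verify that each literature polynomial, after the stated renormalisation, obeys those relations, whence the equalities follow from the uniqueness parts of the universality theorems.
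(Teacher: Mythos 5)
Your proposal is correct and follows essentially the same route as the paper: a term-by-term comparison of state sums using $\rho=r+\tfrac{1}{2}\gamma$ for parts (2) and (3), with the whole content of part (1) concentrated in the identity $\kappa(A)=r_{\G^*/\B}(E)-r_{\G^*/\B}(A^c)=r_3(A)$, which the paper also proves exactly as you sketch, by matching components of $\Sigma\setminus N(V\cup A)$ with components of the spanning subgraph of $\G^*/\B$ on edge set $A^c$ (via Theorem~\ref{dh}). Your correctly flagged this as the only non-routine step, so nothing further is needed.
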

\begin{proof}
The results follow by writing out the state-sum expressions for the polynomials, translating between the language of graphs in pseudo-surfaces and coloured ribbon graphs, and collecting terms. As this is mostly straightforward we omit the details, with the following exception.  
Recall the notation of \eqref{e.vgs1}.
Suppose that $\G$ is the coloured ribbon graph corresponding to a vertex partitioned graph in a surface, and that the boundary colouring of $\G$ is given by $\B$.
By Theorem~\ref{dh},  the connected components of $\Sigma  \backslash N(V)$ are in 1-1 correspondence with the elements of  $V(\G^*/\B)=(\G^*/\B)\ba E$. From this observation, it is readily seen that the connected components of $\Sigma  \backslash N(V \cup A)$, which arise by adding edges to connect components of $\Sigma  \backslash N(V)$, are in 1-1  correspondence with the elements of  $V(\G^*/\B)\cup A=(\G^*/\B)\ba A^c$. It follows that $\kappa (A)=k_{\G^*/\B}(A^c) - k_{\G^*/\B}(E) =   r_{\G^*/\B}(E) - r_{\G^*/\B}(A^c) =r_3(A)$, and so $|A|-\rho(A)-\kappa(A)=r_4(A)$.
\end{proof}

The relations between $R_{ ( \G,\mathcal{V})}$ and $R_{\G}$, and between $K_{G\subset \Sigma}$ and  $\widetilde{K}_{(G\subset \Sigma, \mathcal{V})}$, immediately give the following corollary.
\begin{corollary}\label{dshk}
The following hold. 
\begin{enumerate}
\item For a  graph in a surface $(G\subset \Sigma, \mathcal{V})$ described as coloured ribbon graph $\G$,
\[K_{G\subset \Sigma } (x,y,a,b) = a^{r_2(\G)}   b^{\tfrac{1}{2}\gamma(\Sigma)} T_{ps} \left(\G; x,\tfrac{1}{a},y,\tfrac{1}{b}\right). \]

\item For a ribbon graph $\G$ and for a vertex colouring $\V$ that assigns a unique colour to each vertex, 
\[  R_{ \G}(x,y,z) = (yz^2)^{\tfrac{1}{2}\gamma(\G)} T_{cps} \left(  \G; x-1, \tfrac{1}{yz^2} ,y \right).  \]
\end{enumerate}
\end{corollary}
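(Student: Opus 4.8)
The plan is to derive both parts directly from Theorem~\ref{nm}, combined with the two reduction identities recorded immediately before that theorem, which express the general polynomials $\widetilde K_{(G\subset\Sigma,\V)}$ and $R_{(\G,\V)}$ in terms of $K_{G\subset\Sigma}$ and $R_\G$ when $\V$ is the \emph{trivial} vertex partition (each vertex in its own colour class). So throughout I would fix $\V$ to be this trivial partition. The one preliminary observation I would record is that for such a $\V$ the graph $\G/\V$ is just the underlying abstract graph of $\G$, hence $r_{\G/\V}(A)=r(A)$ for all $A\subseteq E$; in particular $r_2(\G)=\rho(\G)-r_{\G/\V}(\G)=\rho(\G)-r(\G)=\tfrac12\gamma(\G)$, using $\rho=r+\tfrac12\gamma$.

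For part~(1) I would start from the reduction identity $K_{G\subset\Sigma}(x,y,a,b)=b^{\frac12\gamma(\Sigma)}\,\widetilde K_{(G\subset\Sigma,\V)}(x,y,a,1/b)$, and then insert Theorem~\ref{nm}(1) with its fourth argument specialised to $1/b$. Because in that identity the fourth variable of $\widetilde K$ is carried unchanged to the fourth variable of $T_{ps}$, this produces $a^{r_2(\G)}\,T_{ps}(\G;x,1/a,y,1/b)$; combining the scalar prefactors $b^{\frac12\gamma(\Sigma)}$ and $a^{r_2(\G)}$ then gives precisely the claimed formula. (One could rewrite $r_2(\G)$ as $\tfrac12\gamma(\G)$, but this is in general different from $\tfrac12\gamma(\Sigma)$, so the two prefactors genuinely record different data and it is natural to leave the statement as in Theorem~\ref{nm}(1).)

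For part~(2) I would use that $R_{(\G,\V)}(x,y,z)=R_\G(x,y,z)$ for the trivial partition, apply Theorem~\ref{nm}(2) to rewrite the left-hand side as $(yz^2)^{r_2(\G)}\,T_{cps}(\G;x-1,1/(yz^2),y)$, and finally substitute $r_2(\G)=\tfrac12\gamma(\G)$ from the preliminary observation to obtain the stated form.

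I do not expect a genuine obstacle: the whole argument is bookkeeping on top of Theorem~\ref{nm} and the two reduction identities. The points needing a little care are (i) tracking the variable substitutions, in particular checking that the reciprocal $1/b$ lands in the correct (fourth) slot of $T_{ps}$, and (ii) confirming the exponent identity $r_2(\G)=\tfrac12\gamma(\G)$, which is immediate as above. It is also worth noting explicitly that the reduction identities are only asserted for the trivial partition --- which is why the corollary is stated for that case --- since $K_{G\subset\Sigma}$ and $R_\G$ carry no partition data whereas the right-hand sides do.
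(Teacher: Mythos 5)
Your proposal is correct and follows exactly the paper's route: the paper derives the corollary immediately from Theorem~\ref{nm} together with the two reduction identities for the trivial vertex partition stated just before it. Your extra bookkeeping (checking $r_2(\G)=\rho(\G)-r(\G)=\tfrac{1}{2}\gamma(\G)$ for the trivial partition, and noting that $\gamma(\G)\neq\gamma(\Sigma)$ in general so the prefactor in part~(1) stays as $a^{r_2(\G)}$) is exactly the detail the paper leaves implicit.
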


\begin{remark}
The polynomials $R_{ \G}(x,y,z)$ and $K_{G\subset \Sigma } (x,y,a,b)$ are the two most studied topological graph polynomials in the literature. However, there is a problematic aspect to both polynomials  in that neither has a `full' recursive deletion-contraction definition that reduces the polynomial to a linear combination of polynomials of trivial graphs (as is the case with the classical Tutte polynomial of a graph). Instead the known deletion-contraction relations reduce the polynomials to those of graphs in surfaces on one vertex.  

The significance of Corollary~\ref{dshk}  is that it tells us that by extending the domains of the polynomials to graphs (cellularly) embedded in pseudo-surfaces, we obtain versions of the polynomials with `full' recursive deletion-contraction definitions. This indicates that the Bollob\'as--Riordan polynomial is not a Tutte polynomial for cellularly embedded graphs in \emph{surfaces}, as it has been considered to be, but in fact a Tutte polynomial    for cellularly embedded graphs in \emph{pseudo-surfaces}. A similar comment holds for the Krushkal polynomial. Moreover, that most of the known properties of the Bollob\'as--Riordan polynomial only hold for the specialisation $x^{\tfrac{1}{2} \gamma(\G)} R\left(x+1,y, \tfrac{1}{\sqrt{xy}}\right)$ is explained by the fact that, by Theorem~\ref{nm}\ref{adsfg},  this specialisation is the Tutte polynomial for a ribbon graph, which is where the ribbon graph results naturally belong.
\end{remark}

\section{Activities expansions}\label{s.7}
In this remaining section we consider analogues of the activities expansions of the Tutte polynomial, as in \eqref{d3}. In the classical case, the activities expansion for the Tutte polynomial expresses it as a sum over spanning trees (in the case where the graph is connected). In the setting of topological graph polynomials, we instead consider quasi-trees. 

A ribbon graph is a \emph{quasi-tree} if it has exactly one boundary component. It is a \emph{quasi-forest} if each of its components has exactly one boundary component. A ribbon subgraph of $\G$ is \emph{spanning} if it contains each vertex of $\G$. Note that a genus 0 quasi-tree is a tree and a genus 0 quasi-forest is a forest. In this section we work with connected ribbon graphs and  spanning quasi-trees, for simplicity, but the results  extend to the non-connected case by considering maximal spanning quasi-forests instead.

An activities expansion for the Bollob\'as--Riordan polynomial of orientable ribbon graphs was given by A.~Champanerkar, I.~Kofman, and N.~Stoltzfus in \cite{Champ}. This was quickly extended to non-orientable ribbon graphs by  F.~Vignes-Tourneret in \cite{VT}, and independently by E.~Dewey in unpublished work~\cite{dewey}.  C.~Butler in \cite{But} then extended this to give a quasi-tree expansion for the Krushkal polynomial. Each of these expansions expresses the graph polynomial as a sum over quasi-trees, but they all include a Tutte polynomial of an associated graph as a summand. Most recently, A.~Morse in \cite{Morse:2017aa} gave a spanning tree expansion for the 2-variable Bollob\'as--Riordan polynomial  of a delta-matroid that specialised to one for the 2-variable Bollob\'as--Riordan polynomial  of a ribbon graph. 

In this section, we give a spanning tree expansion for $T_{ps}$. However, it is more convenient to work with a normalisation of $T_{ps}$, as follows.

We consider the polynomial $U$ of coloured ribbon graphs (see Theorems~\ref{fdgu}, \ref{zxa}, and \ref{mnc}) specialised at $ \alpha= \beta= \gamma, a_{\bs}=a_{\bp}^{1/2}=a_{\olc}= a_{\olh}^{1/2}=1$. For convenience we denote the resulting polynomial by $P$. It is given by 
\begin{equation}\label{das}
 P(\G)=  
\sum_{A\subseteq E} 
\left(b_{\bs}\right)^{r_1(A) }
\left(b_{\bp}\right)^{r_2(A)}
\left( b_{\olc}\right)^{r_3(A)}
\left(b_{\olh}\right)^{ r_4(A)}
\end{equation}
and, by Theorem~\ref{sahj}, satisfies the deletion-contraction relations
\begin{equation}\label{sdfh}
P(\G) = 
  \begin{cases} 
   P(\G\ba e)+g(e)P(\G/e) & \\
   1&  \text{if } \G \text{ is edgeless}
  \end{cases}
\end{equation}
where
\[
g(e) = 
\begin{cases}
b_{\bs} &  \text{if $e$ is not a loop in $\G$, not a loop in $\G/\V$} \\
b_{\bp} & \text{if $e$ is not a loop in $\G$, a loop  in $\G/\V$}\\
b_{\olc}& \text{if $e$ is a bridge in $\G^*/\B$,  an orientable loop in $\G$}\\
b_{\olh}& \text{if $e$ is not a bridge in $\G^*/\B$,  an orientable loop in $\G$}\\
\sqrt{b_{\bp}b_{\olh}}& \text{if $e$ is a non-orientable loop in $\G$}.
\end{cases}
\]

By comparing \eqref{udd} and \eqref{das} it is easily seen how  $U(\G)$ (and hence $T_{ps}$) can be recovered from $P(\G)$.

We say that a ribbon graph $\G$ is the  \emph{join} of ribbon graphs $\G'$ and $\G''$, written $\G' \vee \G''$, if $\G$ can be obtained by identifying an arc on the boundary of a vertex of  $\G'$ with an arc on the boundary of a vertex of $\G''$. The two  vertices with identified arcs make a single vertex of $\G$. (See, for example, \cite{EMMbook,Mo12} for  elaboration of this operation.) If $\G$ is coloured then the colour class of the vertices of $\G'$ and $\G''$ should be identified in forming $\G$, as should their boundary colour classes.

\begin{proposition}\label{veesqcup}
Let $\G$ and $\G'$ be coloured ribbon graphs. Then $P(\G \vee \G')=P(\G \sqcup \G')$.
\end{proposition}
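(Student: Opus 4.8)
The plan is to read the identity off the state-sum \eqref{das}; in fact I will show that $P$ is \emph{multiplicative} over both operations, $P(\G\vee\G')=P(\G)\,P(\G')=P(\G\sqcup\G')$. Write $E=E(\G)\sqcup E(\G')$, so that every $A\subseteq E$ decomposes uniquely as $A=A_1\sqcup A_2$ with $A_1\subseteq E(\G)$ and $A_2\subseteq E(\G')$. It is enough to show that each of the four exponents in \eqref{das} is \emph{additive}, meaning that, computed for $\G\star\G'$ with $\star\in\{\vee,\sqcup\}$, one has $r_i(A)=r_i^{\G}(A_1)+r_i^{\G'}(A_2)$ for $i=1,2,3,4$. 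Granting this, \eqref{das} factors as a product of the state-sums of $\G$ and of $\G'$ for either choice of $\star$, and both products equal $P(\G)\,P(\G')$. (One could also argue by induction using the deletion--contraction relation \eqref{sdfh}, but the state-sum route is shorter.)

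First I would observe that deleting edges never disturbs the vertex, or the boundary arc, at which a join is formed, so $(\G\vee\G')\ba A^c=(\G\ba A_1^c)\vee(\G'\ba A_2^c)$, and similarly for $\sqcup$; this reduces each identity to the case $A=E$. Next I would record the effect of the two operations on the auxiliary graphs $\G/\V$ and $\G^*/\B$. The coloured join identifies a vertex $u$ of $\G$ with a vertex $u'$ of $\G'$ and fuses their colour classes, so $(\G\vee\G')/\V$ is the one-point join of $\G/\V$ and $\G'/\V$ at $[u]$ and $[u']$; since a one-point join decreases both the number of vertices and the number of components of every spanning subgraph by exactly one, the rank $r_1=r_{\G/\V}$ is additive. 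The join also merges the two boundary components meeting the glued arc into a single boundary component and fuses their boundary colour classes, while every edge of $\G$ (resp.\ of $\G'$) still meets boundary components lying in the same colour classes as before; hence $(\G\vee\G')^*/\B$ is the one-point join of $\G^*/\B$ and $\G'^*/\B$, so $r_{\G^*/\B}$, and therefore $r_3$, is additive. For the disjoint union all of these auxiliary graphs are disjoint unions of graphs, so additivity there is immediate.

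It remains to handle $\rho$. By \eqref{e.vpa}, $\rho(A)=\tfrac12\bigl(|A|+|V|-b(A)\bigr)$. Passing from $\G\sqcup\G'$ to $\G\vee\G'$ decreases $|V|$ by one (two vertices become one) and, because the two boundary components meeting the glued arc are fused, decreases $b(A)$ by one as well; the two corrections cancel in \eqref{e.vpa}, so $\rho^{\vee}(A)=\rho^{\sqcup}(A)=\rho^{\G}(A_1)+\rho^{\G'}(A_2)$ for both operations. Combined with the additivity of $r_{\G/\V}$, of $r_{\G^*/\B}$, and of $|A|$, this yields the additivity of $r_2=\rho-r_{\G/\V}$ and of $r_4=|A|+r_{\G^*/\B}(A^c)-r_{\G^*/\B}(E)-\rho$, finishing the argument.

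The routine parts are the rank identity for one-point joins of graphs and the bookkeeping in \eqref{e.vpa}. The only genuine obstacle is the topological input: that a join fuses exactly one boundary component of each factor into one new boundary component, that this is the sole change to the boundary structure, and that it is compatible with edge deletion --- equivalently, that $b(\G\vee\G')=b(\G)+b(\G')-1$ holds for all spanning subgraphs. This is exactly what makes the $-1$ in the vertex count cancel the $-1$ in the boundary count inside $\rho$, and it follows from the local model of the join, where an arc of free boundary of one vertex disc is identified with an arc of free boundary of another. With that established, the multiplicativity of $P$ --- hence $P(\G\vee\G')=P(\G\sqcup\G')$ --- follows.
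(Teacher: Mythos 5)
Your argument is correct, but it takes a genuinely different route from the paper's. The paper proves Proposition~\ref{veesqcup} by induction via the deletion--contraction relation \eqref{sdfh}: one checks that an edge of $\G$ has the same coefficient $g(e)$ whether it is viewed inside $\G\vee\G'$ or inside $\G\sqcup\G'$ (because $(\G\vee\G')/\V$ and $(\G\vee\G')^*/\B$ are one-point joins of the corresponding auxiliary graphs, so loop/bridge status in them is unchanged), that deletion and contraction commute with both $\vee$ and $\sqcup$, and that both recursions bottom out at edgeless graphs where $P$ is identically $1$ --- this last normalisation being the whole point of passing from $U$ to $P$. You instead read the identity off the state-sum \eqref{das} by showing each of the exponents $r_1,\dots,r_4$ is additive over both operations, reducing everything to: (i) the rank of a one-point join of graphs is the sum of the ranks (applied to $\G/\V$ and $\G^*/\B$), and (ii) the cancellation in \eqref{e.vpa} between the loss of one vertex and the loss of one boundary component under $\vee$, which keeps $\rho$ additive. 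Your route yields the slightly stronger statement that $P$ is multiplicative over both $\vee$ and $\sqcup$ (which the inductive proof also gives but the paper does not record), and it localises the genuine topological content exactly where you put it: a join fuses precisely one boundary component of each factor into one, compatibly with edge deletion. The one point worth making explicit if you write this up is that $r_3(A)$ and $r_4(A)$ involve $r_{\G^*/\B}(A^c)$ evaluated on the \emph{complementary} edge set, so you need additivity of $r_{\G^*/\B}$ on all spanning subgraphs of the one-point join, not just on $E$ and $A$; this is covered by your reduction but deserves a sentence.
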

\begin{proof}
The result follows easily by computing the expressions using the deletion-contraction relations and noting that $P$ takes the value 1 on all edgeless coloured ribbon graphs.
\end{proof}

Equation~(\ref{d.kru}) and Proposition~\ref{veesqcup} immediately give the following lemma.

\begin{lemma}\label{hdda}
\[P(\G)=
\begin{cases}
(1+b_{\olc})P(\G\backslash e) & \text{if $e$ is a trivial orientable loop with} \\
& \quad\quad\quad \quad\quad\quad\quad\quad\quad \text{two boundary colours} \\
(1+b_{\olh})P(\G\backslash e) & \text{if $e$ is a trivial orientable loop with} \\
&\quad\quad\quad \quad\quad\quad\quad\quad\quad \text{one boundary colour} \\
(1+\sqrt{b_{\bp}b_{\olh}})P(\G\backslash e) & \text{if $e$ is a trivial non-orientable loop} \\
(1+b_{\bs})P(\G/e) & \text{if $e$ is a bridge with two vertex colours} \\
(1+b_{\bp})P(\G/e) & \text{if $e$ is a bridge with one vertex colour} \\
P(\G\backslash e)+g(e)P(\G/e) & \text{if $e$ is otherwise} \\
1 & \text{if $\G$ is edgeless}
\end{cases}
\]
\end{lemma}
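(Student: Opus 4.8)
The plan is as follows. The last two lines of the displayed formula (the case ``$e$ otherwise'' and the case ``$\G$ edgeless'') are precisely the two cases of the deletion--contraction relation \eqref{sdfh} for $P$, so there is nothing to prove for them. For each of the first five cases I would start from \eqref{sdfh}, which reads $P(\G)=P(\G\ba e)+g(e)\,P(\G/e)$, read off the value of $g(e)$, and then show that in fact $P(\G\ba e)=P(\G/e)$, so that this two-term relation collapses to the one-term form in the statement --- $(1+g(e))\,P(\G\ba e)$ for the three loop cases and $(1+g(e))\,P(\G/e)$ for the two bridge cases. So the lemma reduces to two sub-tasks: (i) evaluating $g(e)$ in each case, and (ii) proving $P(\G\ba e)=P(\G/e)$ whenever $e$ is a bridge or a trivial loop.

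Sub-task (i) is immediate from the definition of $g$ in \eqref{sdfh}. A bridge $e$ is not a loop in $\G$, and it is a loop in $\G/\V$ exactly when its two ends lie in a single vertex colour class; so $g(e)=b_{\bs}$ in the two-colour case and $g(e)=b_{\bp}$ in the one-colour case. A trivial orientable loop $e$ is an orientable loop in $\G$, and it is a bridge (respectively, a loop) in $\G^*/\B$ exactly when the two boundary components meeting $e$ lie in distinct (respectively, the same) boundary colour class; so $g(e)=b_{\olc}$ in the two-colour case and $g(e)=b_{\olh}$ in the one-colour case. A trivial non-orientable loop has $g(e)=\sqrt{b_{\bp}b_{\olh}}$ directly.

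For sub-task (ii) the plan is to use Proposition~\ref{veesqcup}, together with the auxiliary observation that $P$ is unchanged by adjoining an isolated vertex (in any vertex and boundary colour classes): this is clear from the state-sum \eqref{das}, since an isolated vertex adds only isolated vertices to $\G/\V$ and to $\G^*/\B$, so changes no rank, and it increases $|V|$ and $b(A)$ each by $1$, so does not change $\rho(A)$; hence each $r_i(A)$, and therefore $P$, is unchanged. Now, if $e$ is a trivial loop, triviality means that its vertex $v$ together with $e$ splits off as a join summand, $\G=\G'\vee B$, where $B$ is the one-vertex one-loop coloured ribbon graph on $e$ (an annulus or a M\"obius band, with colours inherited from $\G$). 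Then $\G\ba e=\G'\vee(\text{isolated vertex})$, and by Table~\ref{tablecontractrg} $\G/e=\G'\vee(B/e)$, where $B/e$ is a pair of isolated vertices in the orientable case and a single isolated vertex in the non-orientable case. Applying Proposition~\ref{veesqcup} to each join to replace it by a disjoint union, then discarding the isolated vertices, gives $P(\G\ba e)=P(\G')=P(\G/e)$, as required for the loop cases. If instead $e$ is a bridge, then $\G\ba e$ is the disjoint union of the two components created by deleting $e$, while $\G/e$ is the join of those two components along the two ends of $e$ (with their vertex colour classes identified, as the contraction rule prescribes); so Proposition~\ref{veesqcup} again gives $P(\G\ba e)=P(\G/e)$, as required for the bridge cases.

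The one non-formal ingredient, and the part I expect to require the most care, is the colour bookkeeping in sub-task (ii): one must check that the join and disjoint-union identities hold \emph{as coloured ribbon graphs}, i.e.\ that the vertex and boundary colour classes on the two pieces match up exactly as dictated by the colour-update rules for deletion and contraction, and one must treat the degenerate case $\G'$ empty separately. That triviality of a loop yields the join decomposition $\G=\G'\vee B$, and that contracting a bridge yields the join $\G_1\vee\G_2$, is read off from the local pictures in Table~\ref{tablecontractrg}; these identifications are the only steps that are not purely formal.
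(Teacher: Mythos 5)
Your overall strategy is exactly the paper's: the published proof consists of the single remark that the deletion--contraction relation \eqref{sdfh} together with Proposition~\ref{veesqcup} ``immediately give'' the lemma, and your two sub-tasks --- reading off $g(e)$ from \eqref{sdfh}, and showing $P(\G\ba e)=P(\G/e)$ via join $=$ disjoint union plus invariance under adding isolated vertices --- are the right way to unpack that remark. Sub-task (i) for bridges and non-orientable loops, and the reduction of sub-task (ii) to Proposition~\ref{veesqcup}, are fine as far as they go.

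The gap sits precisely in the step you flag as needing the most care: the claim that the decompositions $\G=\G'\vee B$ (trivial loop) and $\G/e=\G_1\vee\G_2$ (bridge) hold \emph{as coloured ribbon graphs} ``with colours inherited from $\G$''. This is not automatic: a colour class of $\G$ may link the two would-be join summands other than through the identifications the join itself makes, and then Proposition~\ref{veesqcup} does not apply and the conclusion genuinely fails. Concretely, let $\G$ be a single vertex carrying two non-interleaved orientable loops $e,f$, with boundary components $a$ (inside $e$), $c$ (inside $f$) and $b$ (outer), coloured $\B=\{\{a,c\},\{b\}\}$. Then $e$ is a trivial orientable loop of the underlying ribbon graph whose two incident boundary components have distinct colours, yet $e$ is \emph{not} a bridge of $\G^*/\B$ (there it is parallel to $f$), so your asserted equivalence ``distinct boundary colours $\iff$ bridge in $\G^*/\B$'' fails; worse, $P(\G\ba e)=1+b_{\olh}$ while $P(\G/e)=1+b_{\olc}$, so no one-term relation holds at all ($P(\G)=1+2b_{\olh}+b_{\olc}b_{\olh}$). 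An analogous failure occurs for a bridge when a vertex colour class other than $[u],[v]$ meets both sides. The fix is that ``trivial loop'' and ``bridge'' in the lemma must be read at the level of coloured ribbon graphs, i.e.\ so that the join decomposition really holds together with the colourings (forcing the inner boundary component of the loop's annulus, respectively the two sides of the bridge, to be colour-isolated in the appropriate sense); under that reading your evaluation of $g(e)$ and your application of Proposition~\ref{veesqcup} both go through. You need to state and use this hypothesis explicitly rather than assume the colour bookkeeping works out.
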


We consider resolution trees for the computation of $P(\G)$ via the deletion-contraction relations of Lemma~\ref{hdda}. An example of one  is given in Figure~\ref{gsdh}. 
We use the following terminological conventions in our resolution trees for $P(\G)$.  
The \emph{root} is the node corresponding to the original graph. 
A \emph{branch} is a path from the root to a leaf, and the branches are in 1-1 correspondence with the leaves.
The leaves are of height 0, with the height of the other nodes given by the distance from a leaf (so the root is of height $|E(G)|$). 
Note that the advantage of using $P(\G)$ rather than $U(\G)$ is that there are fewer leaves in the resolution tree.

\begin{figure}[ht]
\centering
\labellist
\small\hair 2pt
\pinlabel $1$ at 181 617
\pinlabel $b_{\bp}$ at 440 617
\pinlabel $1+b_{\olh}$ at  60 370
\pinlabel $1$ at  410 370
\pinlabel $b_{\olh}$ at  640 370
\pinlabel $1+b_{\bp}$ at  60 130
\pinlabel $1+\sqrt{b_{\bp}b_{\olh}}$ at  340 130
\pinlabel $1+b_{\bp}$ at  740 130
\endlabellist
\includegraphics[width=80mm]{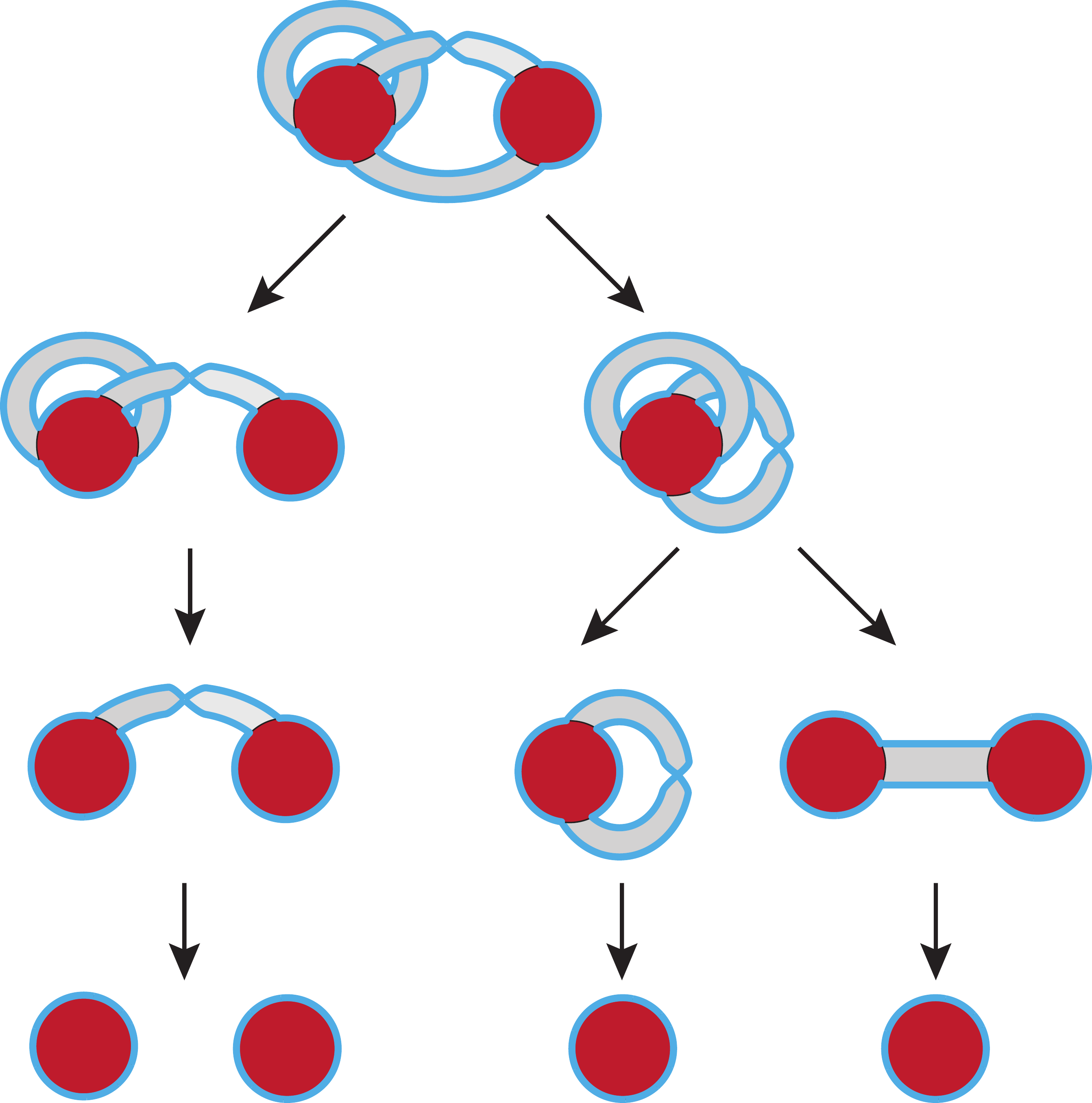}
\caption{A resolution tree. In this example, each ribbon graph has a single vertex colour class and a single boundary colour class }
\label{gsdh}
\end{figure}

\begin{lemma}\label{zxzc}
If $\G$ is a connected coloured ribbon graph then the set of spanning quasi-trees of $\G$ is in one-one correspondence with the set of leaves of the resolution tree of $P(\G)$. Furthermore, the correspondence is given by deleting the set of edges of $\G$ that are deleted in the branch that terminates in the node. 
\end{lemma}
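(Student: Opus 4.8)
The plan is to induct on the number of edges of $\G$. When $\G$ is edgeless the resolution tree is a single leaf (the root), and the only spanning quasi-tree of a connected edgeless ribbon graph is $\G$ itself, so the base case holds vacuously. For the inductive step I would pick the first edge $e$ processed in forming the resolution tree and split into cases according to which line of Lemma~\ref{hdda} applies to $e$.

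First consider the ``otherwise'' case, where the tree branches into a $\G\ba e$ subtree and a $\G/e$ subtree. Here the key structural fact I need is the standard bijection between spanning quasi-trees of $\G$ and spanning quasi-trees of $\G\ba e$ together with spanning quasi-trees of $\G/e$: a spanning quasi-tree $Q$ of $\G$ either avoids $e$ (and then $Q$ is a spanning quasi-tree of $\G\ba e$) or contains $e$ (and then $Q/e$ is a spanning quasi-tree of $\G/e$). This is a known property of quasi-trees (it follows from $b(\G)=b(\G\ba e)$ or $b(\G)=b(\G/e)+\text{(correction)}$ bookkeeping, equivalently from the fact that exactly one of $\G\ba e$, $\G/e$ is connected in the relevant way when $e$ is neither a bridge nor a trivial loop); since $e$ is in the ``otherwise'' class, both $\G\ba e$ and $\G/e$ are connected coloured ribbon graphs with fewer edges, so the inductive hypothesis applies to each. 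Composing the bijection $Q\mapsto(Q \text{ or } Q/e)$ with the two inductively-given correspondences, and tracking that the branch through the $\G\ba e$ subtree is exactly the branch through $\G$ that deletes $e$ (plus whatever is deleted later), while the branch through the $\G/e$ subtree is the branch that keeps $e$, yields the claim. The ``deleted edges of the branch'' description is preserved because in the $\G/e$ subtree no extra deletion of $e$ is recorded, matching the fact that $Q$ keeps $e$.

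Next, the degenerate cases: when $e$ is a bridge (one or two vertex colours) the tree has a single child $\G/e$ with a scalar $(1+b_{\bs})$ or $(1+b_{\bp})$, and when $e$ is a trivial loop (orientable with one or two boundary colours, or non-orientable) the single child is $\G\ba e$ with a scalar $(1+b_{\olc})$, $(1+b_{\olh})$, or $(1+\sqrt{b_{\bp}b_{\olh}})$. In the bridge case every spanning quasi-tree of $\G$ must contain $e$ (deleting a bridge disconnects $\G$), and $Q\mapsto Q/e$ is a bijection onto spanning quasi-trees of $\G/e$; the branch recorded deletes exactly the edges of $Q^c$, none of which is $e$, consistent with the node deleting no edges at this step. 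In the trivial-loop case every spanning quasi-tree of $\G$ must \emph{avoid} $e$ (a quasi-tree has one boundary component, and a trivial loop together with the rest would force more, since $e$ bounds a region on its own side), so $Q$ is a spanning quasi-tree of $\G\ba e$, matching the branch that deletes $e$; apply the inductive hypothesis to $\G\ba e$.

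\textbf{Main obstacle.} The routine parts are the bookkeeping of which edges a branch deletes and checking connectivity of $\G\ba e$ and $\G/e$ in each case. The one place that needs genuine care is the claim, in both the generic and the degenerate cases, that ``spanning quasi-trees of $\G$ containing $e$'' correspond bijectively to ``spanning quasi-trees of $\G/e$'' and ``spanning quasi-trees avoiding $e$'' to ``spanning quasi-trees of $\G\ba e$'', \emph{and} that in the degenerate cases the corresponding set on one side is empty --- i.e.\ that for a bridge no spanning quasi-tree avoids it and for a trivial loop none contains it. I would justify this via the boundary-component count: a spanning ribbon subgraph $Q$ is a quasi-tree iff $b(Q)=1$, and one checks that adding $e$ to or removing $e$ from such a $Q$ changes $b$ in a controlled way that rules out the degenerate possibility, while in the generic case exactly one of the two moves keeps $b=1$. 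This is essentially the ribbon-graph analogue of the matroid fact that a basis contains every coloop and avoids every loop, and is where I expect to spend the bulk of the real argument.
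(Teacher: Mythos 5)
Your inductive strategy is genuinely different from the paper's argument, which is direct: the paper observes that the graph at a leaf is $\G\ba D/C$ independently of the processing order, argues that no bridge is deleted and no trivial loop contracted along a branch so that the leaf is a single vertex, and then uses the fact that contraction preserves the number of boundary components to conclude that $\G\ba D$ is a quasi-tree; surjectivity is obtained by reversing this. Your generic-case bijection (quasi-trees of $\G$ avoiding $e$ correspond to quasi-trees of $\G\ba e$, those containing $e$ to quasi-trees of $\G/e$, via invariance of $b$ under contraction) is sound and is indeed the recursive shadow of the paper's argument. Note, though, that your induction hypothesis is stated only for connected graphs, so you must also verify that $\G/e$ is connected in the ``otherwise'' case; this holds when $e$ is a non-loop or a non-orientable loop, but for an orientable loop it requires knowing that a non-trivial orientable loop is interlaced with some other edge, i.e.\ it depends on exactly how ``trivial loop'' is delimited, and deserves an explicit check.

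The genuine gap is in the step you yourself flag as the main obstacle, specifically for trivial \emph{non-orientable} loops. Your justification that ``a trivial loop together with the rest would force more [boundary components], since $e$ bounds a region on its own side'' is correct only for orientable loops. A single vertex with one non-orientable loop is a M\"obius band, which has exactly one boundary component, so it \emph{is} a spanning quasi-tree containing that loop; more generally, adding a non-interlaced non-orientable loop to a quasi-tree does not change $b$, so spanning quasi-trees containing a trivial non-orientable loop do exist. Since the resolution step for such an edge has a single child $\G\ba e$ (with coefficient $1+\sqrt{b_{\bp}b_{\olh}}$), your claimed emptiness fails and the map from leaves to quasi-trees is not surjective as you have set it up. This is exactly the delicate point in non-orientable quasi-tree expansions (the coefficient $1+\sqrt{b_{\bp}b_{\olh}}$ is there precisely because the ``with $e$'' and ``without $e$'' quasi-trees get absorbed into one branch), and it is also the point where the paper's own proof is at its tersest: the paper's surjectivity argument only rules out contracting trivial \emph{orientable} loops and deleting bridges. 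So your plan would need, at minimum, to treat trivial non-orientable loops as a separate case and say precisely which of the two quasi-trees $\T$, $\T\cup e$ the leaf is declared to correspond to; as written, that case of your induction breaks.
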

\begin{proof}
First consider a leaf of the resolution tree. Let $D$ be the set of edges that are deleted in the branch containing that leaf, and let $C$ be the set of edges that are contracted in that branch. Then, since the order of deletion and contraction of edges does not matter, the ribbon graph at the node is give by $\G\ba D /C$. 
Since, in the resolution tree, bridges are never deleted and trivial loops are never contracted we have that  $\G$ and $\G\ba D /C$ have the same number of components, and so $\G\ba D /C$ consists of a single vertex and no edges. Then $\G\ba D /C$ has exactly one boundary component. Since contraction does not change the number of boundary components, it follows that $\G\ba D$ has exactly one boundary component and is hence a spanning quasi-tree of $\G$.

Now suppose that $\T$ is a spanning quasi-tree of $\G$. We need to show that $\T= \G \ba D$ where $D$ is the set of edges that are deleted in some branch of the resolution tree.  Let $C=E(\G) \ba D$.
Since $\G\ba D=\T$ is a quasi-tree and contraction does not change the number of boundary components, $\G\ba D /C$ consists of a single vertex. 
The order in which the edges of $\G$ are deleted and contracted does not change the resulting ribbon graph and so if we compute $\G\ba D /C$  applying deletion and contraction in any order we will never delete a bridge or contract a trivial orientable loop (otherwise $\G\ba D /C$ would not be a single vertex). It follows that there is a branch in the resolution tree in which the edges in $D$ are deleted and the edges in $C$ are contracted.  Thus $\T= \G \ba D$ where $D$ is the set of edges that are deleted in some branch of the resolution tree, completing the proof of the correspondence. 
\end{proof}

We are aiming to obtain an `activities expansion' for $P$ (and hence $T_{ps}$ and its specialisations). To do this we need to be able to express certain properties of an edge in the coloured ribbon graph at a given node of the resolution tree.  These expressions will be the analogues for coloured ribbon graphs of internal and external activities in the classical case.

We  make use of partial duals of ribbon graphs, introduced in \cite{Ch09}.
Let $\G=(V, E)$ be a ribbon graph, $A\subseteq E$ and regard the boundary components of the  ribbon subgraph $(V,A)$ of $\G$ as curves on the surface of $\G$. Glue a disc to $\G$ along each of these curves by identifying the boundary of the disc with the curve, and remove the interior of all vertices of $\G$. The resulting ribbon graph is the {\em partial dual} $\G^{A}$ of $\G$.  (See for example \cite{EMMbook} for further background on partial duals.)
Observe that if $\T$ is a spanning quasi-tree of $\G$ then $\G^{E(\T)}$ has exactly one vertex.

An edge $e$ in a one-vertex ribbon graph is said to be \emph{interlaced} with an edge $f$ if the ends  of $e$ and $f$ are  met in the cyclic order $e,f,e,f$ when travelling round the boundary of the vertex. 

\begin{definition}\label{efvh}
Let $\G$ be a connected coloured ribbon graph with a choice of edge order $<$, and let $\T$ be a spanning quasi-tree of $\G$. Let $\G^{\T}$ be the partial dual of its underlying  ribbon graph. Note that $\G^{\T}$ has exactly one vertex and its edges correspond with those of $\G$.

\begin{enumerate}
\item An edge $i$ is said to be \emph{vertex essential} if it is in a cycle of $\G/\V$ consisting of edges in $\T$ greater or equal to $i$ in the edge order. It is \emph{vertex inessential} otherwise.

\item An edge $i$ is said to be \emph{boundary essential} if after the deletion of all edges of $\G^*/\B$ which are in $\T$ and  greater or equal to $i$ in the edge order, the end vertices of the edge are in different components. It is \emph{boundary inessential} otherwise.

\item An edge $i$ of $\G$ is said to be \emph{internal} if it is in $E(\T)$, and \emph{external} otherwise.

\item An edge $i$ of $\G$ is said to be \emph{live} with respect to $(\T,<)$ if in $\G^{\T}$ it is not interlaced with any lower ordered edges. It is said to be \emph{dead} otherwise. The edge is said to be \emph{live (non-)orientable} if it is live and forms an (non-)orientable loop in  $\G^{\T}$. 

\item  Given an edge $i$ of $G$, consider the ribbon subgraph of  $\G^{\T}$ consisting of  all internal edges that are greater or equal to $i$ in the edge order. Arbitrarily orient each of the boundary components of this subgraph, and consider the corresponding oriented closed curves on  $\G^{\T}$. An edge $i$ of $\G$ is said to be \emph{consistent} (respectively \emph{inconsistent}) with respect to $(\T,<)$ if the boundary of $i$ intersects exactly one of the oriented curves and the directed arcs where they intersect are  consistent (respectively inconsistent) with some orientation of the boundary of the edge $i$.
\end{enumerate}
\end{definition}

\begin{lemma}\label{ashu}
Let $\G$ be a connected coloured ribbon graph with a choice of edge order $<$, and $\T$ be a spanning quasi-tree of $\G$. Let $\h$ be the coloured ribbon graph at the node at height $i$ in the branch determined by $\T$. Then the following hold.
\begin{enumerate}
\item  \label{ashu1} The $i$-th edge is a loop in $\h/\V$ if and only if it is vertex essential.

\item \label{ashu2}The $i$-th edge is a bridge in $\h^*/\B$ if and only if it is boundary essential.

\item \label{ashu3} The $i$-th edge is a trivial orientable loop in $\h$ if and only if it is externally live orientable with respect to $\T$ and $<$.

\item\label{ashu4}  The $i$-th edge is a trivial non-orientable loop in $\h$ if and only if it is  live non-orientable with respect to $\T$ and $<$.

\item\label{ashu5} The $i$-th edge is a bridge in $\h$ if and only if it is internally live orientable with respect to $\T$ and $<$.

\item\label{ashu6} The $i$-th edge is an orientable loop in $\h$ if and only if it is consistent with respect to $\T$ and $<$.

\item\label{ashu7} The $i$-th edge is a non-orientable loop in $\h$ if and only if it is inconsistent with respect to $\T$ and $<$.

\item\label{ashu8} The $i$-th edge is not a loop in $\h$ if and only if it is neither consistent nor inconsistent with respect to $\T$ and $<$.
\end{enumerate}
\end{lemma}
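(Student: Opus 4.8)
The plan is to track what happens to the $i$-th edge $e$ along the branch of the resolution tree determined by the spanning quasi-tree $\T$, and to match the local description of $e$ in the ribbon graph $\h$ at height $i$ against the intrinsic notions of Definition~\ref{efvh}. First I would observe that, by the order-independence of deletion and contraction (used already in the proof of Lemma~\ref{zxzc}), the ribbon graph $\h$ at height $i$ is obtained from $\G$ by deleting the external edges larger than $e$ and contracting the internal edges larger than $e$ in the edge order; call these edge sets $D_{>i}$ and $C_{>i}$, so $\h=\G\ba D_{>i}/C_{>i}$ with $e\in E(\h)$. Each of the eight statements is then a statement about whether $e$ is a loop/bridge/trivial loop of a certain orientability type in $\h$, in $\h/\V$, or in $\h^*/\B$, and the strategy is the same throughout: translate the condition on $\h$ into a condition on the edges of $\G$ greater than $e$ that have been deleted/contracted.

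For \ref{ashu1} and \ref{ashu2} I would argue purely at the level of the underlying graphs $\G/\V$ and $\G^*/\B$. Since contraction in $\G$ induces contraction in $\G/\V$ and deletion in $\G$ induces deletion in $\G/\V$ restricted to the surviving edges, $e$ is a loop in $\h/\V$ precisely when, after contracting all larger internal edges (the external ones being irrelevant to the loop condition), the two endpoints of $e$ in $\G/\V$ coincide; this is exactly the statement that $e$ lies in a cycle of $\G/\V$ formed by $\T$-edges $\ge e$, i.e.\ that $e$ is vertex essential. Dually, recalling from Theorem~\ref{zxz} that deletion in $\G$ is contraction in $\G^*$ and hence (after $\B$-identification) in $\G^*/\B$, $e$ is a bridge in $\h^*/\B$ iff deleting the larger internal edges of $\G^*/\B$ disconnects the endpoints of $e$ — which is boundary essentiality. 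Statements \ref{ashu6}, \ref{ashu7}, \ref{ashu8} I would handle via the partial dual: $e$ is a loop in $\h$ iff $e$ is a loop in $(\G^\T)$ restricted appropriately, and orientable vs.\ non-orientable loop corresponds exactly to the consistent/inconsistent dichotomy, using the standard description of boundary components of a ribbon subgraph of a one-vertex ribbon graph and how an edge's ribbon meets them (consistent orientations $\Rightarrow$ orientable loop after the relevant contractions, inconsistent $\Rightarrow$ non-orientable, meeting two distinct curves $\Rightarrow$ non-loop). Statements \ref{ashu3}, \ref{ashu4}, \ref{ashu5} refine this: a \emph{trivial} loop at height $i$ means $e$ forms a loop and is interlaced with no \emph{remaining} (hence no smaller) edge, which is exactly liveness in $\G^\T$, with the orientability of the loop carried over, and ``bridge in $\h$'' is the internal-edge counterpart (internally live orientable).

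The main obstacle I expect is \ref{ashu5} together with the interplay between \ref{ashu3} and \ref{ashu6}: one must be careful that the notions of liveness, consistency, and the loop/bridge structure are all computed in the \emph{same} partial dual $\G^\T$, and that deleting external edges $>e$ and contracting internal edges $>e$ in $\G$ corresponds, after passing to $\G^\T$, to simply restricting attention to the ribbon subgraph on edges $\le i$ (with $e$ distinguished). Establishing this compatibility — essentially that $(\G\ba D_{>i}/C_{>i})^{\,E(\T)\cap\{\le i\}}$ agrees, as far as the behaviour of $e$ is concerned, with the ribbon subgraph of $\G^\T$ on the edges $\le i$ — is the technical heart of the argument, and once it is in place each of the eight equivalences follows by unwinding the corresponding definition in Definition~\ref{efvh}. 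Since these verifications are local and combinatorial, I would present the compatibility lemma carefully and then treat the eight items briskly, in the order \ref{ashu1}, \ref{ashu2} (graph-level), then \ref{ashu6}--\ref{ashu8} (loop types in $\h$), then \ref{ashu3}--\ref{ashu5} (triviality/bridge refinements).
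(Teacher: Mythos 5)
Your proposal follows essentially the same route as the paper: it sets $\h=\G\ba D/C$ with $D,C$ the external/internal edges above $i$, handles items \ref{ashu1} and \ref{ashu2} at the level of $\G/\V$ and $\G^*/\B$ via the compatibility of graph contraction/deletion and duality, and reduces the remaining items to the identity $\h^{(E(\T)\ba C)}=\G^{\T}\ba(D\cup C)$, which is exactly the paper's central computation. The compatibility statement you flag as the technical heart is precisely the chain of partial-dual identities the paper writes out, so the argument is sound and matches the published proof.
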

\begin{proof}
For the proof let $C$ denote the set of edges of $\G$ that are in $\T$ and higher than $i$ in the edge order,
and let $D$ denote the set of edges of $\G$ that are not in $\T$ but are higher than $i$ in the edge order.
Note that $\h=\G\ba D /C$.

Item~\ref{ashu1} follows from the observation that an edge $e$ is a loop in the graph $(\G/V)\ba D /C$ if and only if it is in a cycle of the subgraph $C\cup e$ of the graph $\G/\V$.

The proof of item~\ref{ashu2} follows from the observation that 
$  \h^*/\B =   (\G\ba D /C)^*/\B = (\G^*/\B) / D \ba C$.

For the remaining items, start by observing that, with $C$ and $D$ as above, 
\[\h = \G/ C \ba D =   \G^C \ba (D\cup C) = \left(\left( \G^C \ba (D\cup C)\right)^{(  E(\T)\ba  C )} \right)^{(  E(\T)\ba  C )}  =  ( \G^{\T} \ba (D\cup C))^{(  E(\T)\ba  C )} .\]
In particular, $\h^{(  E(\T)\ba  C )}=\G^{\T} \ba (D\cup C)$, and $\h^{(  E(\T)\ba  C )}$ has exactly one vertex (since $\G^{\T}$ does).

For item~\ref{ashu3}, suppose that $i$ is a trivial orientable loop in $\h$. Then $i$ must be external since otherwise $\h^{(  E(\T)\ba  C )}$ would have more than one vertex.
Then since $i\in \h$ but $i\notin  E(\T)\ba  C$ it follows from the properties of partial duals that $i$ must be a  trivial orientable loop in $\h^{(  E(\T)\ba  C )} = \G^{\T} \ba (D\cup C)$. Thus $i$ must be externally live orientable.  

Conversely, if $i$ is externally live orientable then it must be a trivial orientable loop in $ \G^{\T} \ba (D\cup C)= \h^{(  E(\T)\ba  C )}$. As it is not in $\T$, it must then also be a trivial orientable loop in $(\h^{(  E(\T)\ba  C )})^{(  E(\T)\ba  C )}=\h$. This completes the proof of item~\ref{ashu3}.

For item~\ref{ashu4}, $i$ is a trivial non-orientable loop in $\h$ (regardless of whether it is internal or external) if and only if it is a trivial non-orientable loop in $\h^{(  E(\T)\ba  C )}= \G^{\T} \ba (D\cup C)$. Rephrasing this condition tells us that this happens if and only if $i$ is live non-orientable.

For item~\ref{ashu5}, suppose that $i$ is a bridge in $\h$. Then it must be internal as otherwise $\h^{(  E(\T)\ba  C )}$ would have more than one vertex. It follows that $i$ must be a trivial orientable loop in $\h^{(  E(\T)\ba  C )}= \G^{\T} \ba (D\cup C)$. Thus it must be internally live orientable. 

Conversely, suppose that $i$ is internally live orientable. Then it must be a trivial orientable loop in $ \G^{\T} \ba (D\cup C)= \h^{(  E(\T)\ba  C )}$. As it is in $\T$, it must then also be a bridge in $(\h^{(  E(\T)\ba  C )})^{(  E(\T)\ba  C )}=\h$. This completes the proof of item~\ref{ashu5}.

For items~\ref{ashu6}--\ref{ashu8}, the edge $i$ is consistent or inconsistent if and only if in $\G^{\T} \ba (D\cup C)=\h^{(  E(\T)\ba  C )}$ it touches one boundary component of the ribbon subgraph of $ \h^{(  E(\T)\ba  C )}  $ on the edges $E(\T)\ba  C$. Thus  $i$ is consistent or inconsistent if and only if it is a loop in $ (\h^{(  E(\T)\ba  C )} )^{(  E(\T)\ba  C )}  =\h  $. It is not hard to see that the conditions of consistent and inconsistent determine whether the loop is orientable or non-orientable. This completes the proof of the final three items and of the lemma.
\end{proof}

\begin{definition}\label{mku}
Let $\G$ be a connected coloured ribbon graph with a choice of edge order $<$, and $\T$ be a spanning quasi-tree of $\G$. Then we say that an edge $e$ is of \emph{activity type} $i$ with respect to  $<$ and $\T$ according to the following.
\begin{itemize}
\item ~\emph{Activity type} $1$ if it is externally live orientable and boundary essential.
\item ~\emph{Activity type} $2$ if it is  externally live orientable and boundary inessential.
\item ~\emph{Activity type} $3$ if externally live orientable.
\item ~\emph{Activity type} $4$ if it is internally live orientable and vertex inessential.
\item ~\emph{Activity type} $5$ if it is internally live orientable and vertex essential.
\item ~\emph{Activity type} $6$ if is internal, not of activity type 4, boundary and vertex inessential and neither consistent nor inconsistent.
\item ~\emph{Activity type} $7$ if is internal, not of activity type 5, boundary inessential, vertex essential and neither consistent nor inconsistent.
\item ~\emph{Activity type} $8$ if is internal, not of activity type 1,  boundary essential, vertex essential and  consistent.
\item ~\emph{Activity type} $9$ if is internal, not of activity type 2, boundary inessential, vertex essential and  consistent.
\item ~\emph{Activity type} $10$ if is internal, not of activity type 3, but inconsistent.
\end{itemize}
We let $N(\G,<,\T,i)$ be the numbers of edges in $\G$ of  \emph{activity type} $i$ with respect to  $<$ and $\T$.
\end{definition}

\begin{theorem}\label{qtr}
Let $\G$ be a connected coloured ribbon graph with a choice of edge order $<$, and $\T$ be a spanning quasi-tree of $\G$. Then
\[  P(\G)= \sum_{ \substack{  \text{spanning} \\ \text{quasi-trees }  \\ \T \text{ of } \G} }    \prod_{i=1}^{10}   C(i)^{N(i)}  ,  \]

Where $N(i):=N(\G,<,\T,i)$, and   
$C(1)= 1+b_{\olc}  $, 
$C(2)=1+b_{\olh}$,
$C(3)= 1+\sqrt{b_pb_{\olh}}$,
$C(4)= 1+b_{\bs}$,
$C(5)= 1+b_{\bp}$,
$C(6)= b_{\bs}$,
$C(7)= b_{\bp}$,
$C(8)= b_{\olc}$,
$C(9)= b_{\olh}$,
$C(10)= \sqrt{b_{\bp}b_{\olh}}$.
\end{theorem}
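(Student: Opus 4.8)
The plan is to follow the classical route from deletion--contraction to the activities expansion \eqref{d3}: build the resolution tree of $P(\G)$ using the relations of Lemma~\ref{hdda}, recognise its leaves as the spanning quasi-trees of $\G$ via Lemma~\ref{zxzc}, and then read off, branch by branch, the product of the multiplicative factors collected along the branch, translating each factor into the language of activity types by means of Lemma~\ref{ashu}.

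First I would fix the edge order $<$ and apply the relations of Lemma~\ref{hdda} repeatedly, always dealing with the $i$-th edge at the node of height $i$, so that the coloured ribbon graph at a node of height $i$ has edge set $\{e_1,\dots,e_i\}$. Since $P$ takes the value $1$ on edgeless ribbon graphs (see \eqref{sdfh}) and, by Lemma~\ref{zxzc}, every leaf of the resulting resolution tree is a single vertex, expanding gives
\[ P(\G)=\sum_{\text{leaves}}\ \prod_{\text{nodes of the branch}} \bigl(\text{the factor introduced at that node}\bigr), \]
and Lemma~\ref{zxzc} turns the outer sum into a sum over spanning quasi-trees $\T$, the branch of $\T$ being the one along which the edges of $E(\G)\ba E(\T)$ are deleted and those of $E(\T)$ are contracted. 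It then remains to show that for a fixed $\T$ the inner product equals $\prod_{i=1}^{10} C(i)^{N(\G,<,\T,i)}$.

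For this I would walk along the branch of $\T$ node by node. At the node of height $i$, with $\h$ the coloured ribbon graph there, the factor introduced is governed by the type of the $i$-th edge in $\h$, and the equivalences of Lemma~\ref{ashu} rewrite each of these types purely in terms of $<$ and $\T$. The five single-child cases of Lemma~\ref{hdda}---trivial orientable loop with two, respectively one, boundary colour; trivial non-orientable loop; bridge with two, respectively one, vertex colour---become, via items~\ref{ashu1}--\ref{ashu5} of Lemma~\ref{ashu}, exactly the conditions defining activity types $1$--$5$, with weights $C(1),\dots,C(5)$. In the remaining (two-child) case the $i$-th edge is either contracted, when it lies in $\T$, contributing $g(e_i)$, which by items~\ref{ashu1}, \ref{ashu2}, and \ref{ashu6}--\ref{ashu8} matches activity types $6$--$10$ with weights $C(6),\dots,C(10)$; or it is deleted, when it is not in $\T$ and not a trivial loop, contributing the trivial factor $1$ and carrying no activity type. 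Multiplying these contributions over all nodes of the branch yields the claimed product, and summing over the spanning quasi-trees $\T$ gives the theorem.

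The main obstacle is the book-keeping in this last step: I would need to check that the ten activity types of Definition~\ref{mku} are pairwise incompatible and that they account for precisely the ten non-trivial multiplicative factors that can occur in Lemma~\ref{hdda}, every remaining edge---an external edge removed in a two-child step---contributing $1$, so that nothing is lost. Each such check is a direct application of the corresponding equivalence in Lemma~\ref{ashu}; the points needing a little care are distinguishing, at a single node, a bridge of $\h$ from an ordinary non-loop of $\h$ (handled by item~\ref{ashu5}, which is also what makes the ``not of activity type $4$/$5$'' provisos in Definition~\ref{mku} necessary), and identifying ``two versus one boundary colour'' and ``two versus one vertex colour'' with the (in)essential conditions of Definition~\ref{efvh} (handled by items~\ref{ashu1} and~\ref{ashu2}).
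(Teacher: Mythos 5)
Your proposal is correct and follows essentially the same route as the paper's own proof: expand $P(\G)$ via the resolution tree of Lemma~\ref{hdda}, identify the branches with spanning quasi-trees using Lemma~\ref{zxzc}, and translate the coefficient at each node into an activity type via Lemma~\ref{ashu}, so that each branch contributes $\prod_i C(i)^{N(i)}$. Your closing remarks on the book-keeping (checking that the ten activity types are pairwise incompatible, exhaust the non-trivial factors, and that deleted external non-trivial edges contribute $1$) are precisely the details the paper itself leaves implicit, so nothing is missing.
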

\begin{proof}
By Lemma~\ref{zxzc}, the branches of the resolution tree for $P(\G)$ are in 1-1 correspondence with the spanning quasi-trees of $\G$. $P(\G)$ can be obtained by taking the product of the labels of the edges in each branch of the resolution tree then summing over all branches. These labels are determined by looking at a given node of height $i$, then applying the deletion-contraction relation \eqref{hdda}. The particular coefficient obtained (i.e., which of the ten cases of the deletion-contraction relation is used) depends upon the edge $i$ at the node. This type can be rephrased in terms of activities using   Lemma~\ref{ashu}. The activity types of Definition~\ref{mku} are exactly these rephrasings, and the $N(i)$ are the corresponding terms, excluding those that contribute a coefficient of 1.  The theorem follows.
\end{proof}

\begin{remark}
It is instructive to see how Theorem~\ref{qtr} specialises to the activities expansion for the Tutte polynomial when $\G$ is a plane ribbon graph in which each boundary component and each vertex has a distinct colour.
In this case,  using Lemma~\ref{ashu}, we see that only edges with activity types 1,4, and 7 can occur. Thus $P(G)$ specialises to 
\[P(\G)= \sum_{ \substack{  \text{spanning} \\ \text{quasi-trees }  \\ \T \text{ of } \G} }  (1+b_{\olc})^{N(1)}(1+b_{\bs})^{N(4)}b_{\bp}^{N(7)}.\]
Since the total number of terms $1+b_{\bs}$ and $b_{\bp} $ in each summand equals the number of contracted edges in a branch of the resolution tree which, as we are working with plane graphs, is $r(\G)$, we can write this as  
\[P(\G)= b_{\bp}^{r(\G)}  \sum_{ \substack{  \text{spanning} \\ \text{quasi-trees }  \\ \T \text{ of } \G} }  (1+b_{\olc})^{N(1)}  ((1+b_{\bs})/b_{\bp})^{N(4)}.\]
Finally, as in \cite{Champ}, because $\G$ is plane every quasi-tree is a tree, so $N(4)$ equals the number of internally active edges, and  $N(1)$ equals the number of externally active edges.  Hence
\[P(\G)=    b_{\bp}^{r(\G)}    \sum_{ \substack{  \text{spanning trees} \\ \T \text{ of } \G } }  ((1+b_{\bs})/b_{\bp})^{IA} (1+b_{\olc})^{EA} .\]
From the activities expansion and the universality theorem for the Tutte polynomial, \eqref{d3} and Theorem~\ref{thm1}, we find that for plane graphs, $P(\G)$ is given by 
\[P(\G)=
\begin{cases}
(1+b_{olc})P(\G\backslash e) & \text{if $e$ is a trivial orient. loop with two boundary colours} \\
(1+b_{bs})P(\G/e) & \text{if $e$ is a bridge with two vertex colours} \\
P(\G\backslash e)+b_{\bp}P(\G/e) & \text{if $e$ is otherwise}
\end{cases}
\]
and its value of 1 on edgeless ribbon graphs. This is readily verified to coincide with Lemma~\ref{hdda} when it is restricted to plane graphs.
\end{remark}

A quasi-tree expansion for $U(\G)$, and hence for all the polynomials considered here, can be obtained from Theorem~\ref{qtr} since $U(\G)$ can be obtained from $P(\G)$.  For brevity we will omit explicit formulations of these.

\section{Proofs for Section~\ref{po}}\label{tsr}

Recall that for a graph $G$ we have the following.
\begin{equation}\label{ewe}
G/e^c = \raisebox{-3mm}{\includegraphics[scale=.4]{figs/b13}}   \iff e \text{ is a bridge,} \qquad
G\ba e^c = \raisebox{-3mm}{\includegraphics[scale=.4]{figs/b14}}\iff e \text{ is a loop.}
\end{equation}
Similar results hold for ribbon graphs.
\begin{proposition}\label{bhj}
Let $\G$ be a ribbon graph and $e$ be an edge in $\G$. Then, after removing any isolated vertices, 
\[
\begin{array}{l l c l l}
\G/e^c = \raisebox{-3mm}{\includegraphics[height=12mm]{figs/ol}} & \! \iff  \! e \text{ not a doop,}   &  & 
\G\ba e^c = \raisebox{-3mm}{\includegraphics[height=12mm]{figs/ol}} &\! \iff  \! e \text{  an orientable loop,}
\\
\G/e^c = \raisebox{-3mm}{\includegraphics[height=10mm]{figs/b}}  & \!  \iff \!  e \text{  an orientable doop,}   &  & 
\G\ba e^c = \raisebox{-3mm}{\includegraphics[height=10mm]{figs/b}}& \! \iff  \! e \text{  not a loop,}
\\
\G/e^c = \raisebox{-3mm}{\includegraphics[height=12mm]{figs/nlrg}}   &\!  \iff  \! e \text{  a non-orient. doop,}   &  & 
\G\ba e^c = \raisebox{-3mm}{\includegraphics[height=12mm]{figs/nlrg}} & \! \iff  \! e \text{  a non-orient. loop.}
\end{array}
\]
\end{proposition}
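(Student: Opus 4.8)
The plan is to treat the two columns separately, and to obtain the contraction column from the deletion column by passing to the geometric dual.

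First I would dispose of the deletion column directly. Once isolated vertices are discarded, $\G\ba e^c$ is nothing but the ribbon subgraph of $\G$ consisting of the edge $e$ together with the one or two vertices it meets: deleting the other edges changes neither which vertices $e$ is incident with nor the ribbon structure in a neighbourhood of $e$. Hence $\G\ba e^c$ (modulo isolated vertices) is one of the three one-edge ribbon graphs, and since being a loop, and the orientability of a loop, are intrinsic to that one-edge subgraph, it equals $\ol$, $\br$, or $\nlrg$ according as $e$ is an orientable loop, a non-loop, or a non-orientable loop in $\G$. This is exactly the right-hand column.

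For the contraction column I would use duality. Iterating the identity $(\G/e)^*=\G^*\ba e$ from Theorem~\ref{zxz} over the edges of $e^c$ gives $\G/e^c=(\G^*\ba e^c)^*$, and since duality commutes with disjoint union and sends an isolated vertex to an isolated vertex (a one-disc ribbon graph has a single boundary component), one may pass to duals and drop isolated vertices freely. A short boundary-component count then records $\ol^*=\br$, $\br^*=\ol$, and $\nlrg^*=\nlrg$: the annulus $\ol$ has two boundary components, so its dual has two vertices and one edge; the disc $\br$ has one; and the M\"obius band $\nlrg$ has one and stays non-orientable, duality fixing the ambient surface. Applying the first part of the proof to $\G^*$ in place of $\G$, together with Definition~\ref{doopdef}, shows that $\G^*\ba e^c$ is $\ol$, $\br$, or $\nlrg$ (modulo isolated vertices) precisely when $e$ is an orientable doop, not a doop, or a non-orientable doop in $\G$; taking duals and reading off $\ol^*=\br$, $\br^*=\ol$, $\nlrg^*=\nlrg$ gives the left-hand column.

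The argument is entirely routine; the only point demanding any care is the bookkeeping of isolated vertices as one moves back and forth through the dual, and this is settled once and for all by noting that duality commutes with disjoint union and fixes isolated vertices. So I anticipate no genuine obstacle: the statement is in effect the one-edge instance of the duality between the $\ba$ and $/$ operations, localised at $e$.
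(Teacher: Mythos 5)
Your argument is correct, and the deletion half coincides with the paper's (which simply declares those three equivalences trivial, for exactly the reason you give: $\G\ba e^c$ is the one-edge ribbon subgraph spanned by $e$). Where you genuinely diverge is in the contraction column. The paper handles it directly and geometrically: the boundary components of $\G/e^c$ are determined by which boundary components of $\G$ touch $e$ and how, which is precisely the doop-recognition criterion of Figure~\ref{dahkj}, so the type of $\G/e^c$ reads off the doop type of $e$ with no detour through $\G^*$. You instead iterate $(\G/f)^*=\G^*\ba f$ from Theorem~\ref{zxz} to get $\G/e^c=(\G^*\ba e^c)^*$, apply the deletion half to $\G^*$, invoke Definition~\ref{doopdef}, and translate back via $\ol^*=\br$, $\br^*=\ol$, $\nl^*=\nl$. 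Your route is more formal and arguably more self-contained given that doops are \emph{defined} through the dual, so no fresh topological observation is needed beyond the already-stated duality identities and the bookkeeping of isolated vertices (which you correctly dispose of, since duality respects disjoint union and fixes a single vertex). The paper's route buys a direct picture of why contraction of $e^c$ sees only the boundary components meeting $e$, which is the observation reused later (e.g.\ in Lemma~\ref{asr}); yours buys economy, reducing the left column to the right column by a symmetry that is already a theorem. Both are complete proofs.
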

\begin{proof}
The results for $\G\ba e^c$ are trivial. The results for $\G/e^c$ follow since the boundary components of $\G/e^c$ are determined by those boundary components of $\G$ which intersect $e$.
\end{proof}

\begin{lemma}\label{asr}
Let $\G=(V,E)$ be a coloured ribbon graph with vertex colouring $\V$ and boundary colouring $\B$. 
For each row in the tables below, after removing any isolated vertices, $\G/e^c$ or $\G\ba e^c$  is the coloured ribbon graph in the first column if and only if $e$ has the properties in $\G^*/\B$, $\G$, and $\G/\V$ given in the remaining columns.
\begin{center}
\begin{tabular}{|c||c|c|c|}
\hline
$\G/e^c$ & $e$ in $\G^*/\B$ & $e$ in $\G$  & $e$ in $\G/\V$ \\
\hline \hline
\raisebox{-3mm}{\includegraphics[height=8mm]{figs/bs}} $(=\bs)$   & loop &orientable doop& bridge \\
\hline
  \raisebox{-3mm}{\includegraphics[height=8mm]{figs/bp}}  $(=\bp)$ &loop &  orientable doop & not a bridge
\\\hline
\raisebox{-3mm}{\includegraphics[height=10mm]{figs/olc}} $(=\olc)$&not a loop& not a doop & not a bridge
  \\\hline
 \raisebox{-3mm}{\includegraphics[height=10mm]{figs/olh}} $(=\olh)$&loop & not a doop & not a bridge
  \\\hline
 \raisebox{-3mm}{\includegraphics[height=10mm]{figs/nl}}  $(=\nl)$ & loop & non-orientable doop & not a bridge
 \\\hline
\end{tabular}
\end{center}

\bigskip

\begin{center}
\begin{tabular}{|c||c|c|c|}
\hline
$\G\ba e^c$ & $e$ in $\G^*/\B$ & $e$ in $\G$  & $e$ in $\G/\V$ \\
\hline \hline
 \raisebox{-3mm}{\includegraphics[height=8mm]{figs/bs}}  $(=\bs)$ & not a bridge & not a loop& not a loop
 \\ \hline
 \raisebox{-3mm}{\includegraphics[height=8mm]{figs/bp}}   $(=\bp)$ &  not a bridge &not a loop&  loop  
  \\\hline
\raisebox{-3mm}{\includegraphics[height=10mm]{figs/olc}} $(=\olc)$  &  bridge & orientable loop & loop
  \\\hline
\raisebox{-3mm}{\includegraphics[height=10mm]{figs/olh}}  $(=\olh)$&   not a bridge&  orientable loop & loop 
  \\\hline
\raisebox{-3mm}{\includegraphics[height=10mm]{figs/nl}}  $(=\nl)$  &  not a bridge & non-orientable loop &loop 
\\\hline
\end{tabular}
\end{center}
\end{lemma}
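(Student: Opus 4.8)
The plan is to prove the first table (for $\G/e^c$) directly and then obtain the second (for $\G\ba e^c$) from it by duality. The column headed ``$e$ in $\G$'' is handled at once by Proposition~\ref{bhj}: after discarding isolated vertices, $\G/e^c$ has underlying ribbon graph $\ol$, $\br$, or $\nl$ precisely according as $e$ is not a doop, an orientable doop, or a non-orientable doop in $\G$. This already separates the three groups $\{\bs,\bp\}$, $\{\olc,\olh\}$, $\{\nl\}$, and it settles the $\nl$ row completely, since $\nl$ has a single vertex and a single boundary component and so admits no nontrivial vertex or boundary colouring. Recall also that, by Definition~\ref{doopdef} and the correspondence between vertices of $\G^*$ and boundary components of $\G$, an edge is a doop in $\G$ iff it meets exactly one boundary component of $\G$ (Figure~\ref{dahkj}); consequently the endpoints of $e$ in $\G^*/\B$ are the colour classes of the one or two boundary components that $e$ meets, and $e$ is a loop in $\G^*/\B$ iff those colour classes coincide. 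In particular $e$ is automatically a loop in $\G^*/\B$ whenever it is a doop, which covers the ``$\G^*/\B$'' entries in the $\bs$, $\bp$, and $\nl$ rows.

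It remains to identify the colour classes of $\G/e^c$. For the vertex colours ($\bs$ versus $\bp$): contraction of a coloured ribbon graph corresponds, by Corollary~\ref{gp}\ref{gp1}, to contraction of the associated graph in a pseudo-surface, whose underlying abstract graph is exactly $\G/\V$ (passing to the pseudo-surface model identifies vertices sharing a colour). Hence the vertex colour classes of $\G/e^c$ are in bijection with the vertices of $(\G/\V)/e^c$, i.e. with the components of $(\G/\V)\ba e$. When $e$ is an orientable doop the underlying graph of $\G/e^c$ is $\br$, so its two vertices $u',v'$ are distinct, and they lie in one colour class iff the two endpoints of $e$ in $\G/\V$ lie in one component of $(\G/\V)\ba e$, i.e. iff $e$ is not a bridge in $\G/\V$: this is the $\bs$--$\bp$ dichotomy. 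When $e$ is not a doop, the underlying graph of $\G/e^c$ is $\ol$, so $e$ is a loop in the underlying graph of $G/e^c$ and the endpoints of $e$ already lie in one component of $(\G/\V)\ba e$; thus $e$ is then never a bridge in $\G/\V$, which makes the ``$\G/\V$'' entries in the $\olc$, $\olh$, and $\nl$ rows automatic. (Alternatively one may derive the same conclusion directly from the colour-class union rule for contraction given in Section~\ref{s.4}.)

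For the boundary colours ($\olc$ versus $\olh$): contraction does not change the boundary components of a ribbon graph (Table~\ref{tablecontractrg}) and, for boundary-coloured ribbon graphs, carries the boundary colouring along unchanged. Hence the boundary components of the underlying $\ol$-part of $\G/e^c$ are, with their colours, exactly the boundary components of $\G$ that $e$ meets. (One can see which ones survive by dualizing: $(\G/e^c)^{*}=\G^{*}\ba e^c$ by Theorem~\ref{zxz}, deletion fixes the vertex set, and the non-isolated vertices of $\G^{*}\ba e^c$ are precisely those of $\G^{*}$ meeting $e$, i.e. the boundary components of $\G$ meeting $e$.) When $e$ is not a doop there are two of them, of colours $[a]^{*}_{\G}$ and $[b]^{*}_{\G}$, and they are distinctly coloured iff $[a]^{*}_{\G}\neq[b]^{*}_{\G}$, i.e. iff $e$ is not a loop in $\G^{*}/\B$; this yields $\G/e^c=\olc$, while $[a]^{*}_{\G}=[b]^{*}_{\G}$ yields $\G/e^c=\olh$. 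This completes the first table.

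Finally, the second table follows by duality. Applying Theorem~\ref{zxz} edge by edge gives $(\G\ba e^c)^{*}=\G^{*}/e^c$, and geometric duality of coloured ribbon graphs interchanges $\bs\leftrightarrow\olc$ and $\bp\leftrightarrow\olh$ and fixes $\nl$ (read off from the descriptions of the five graphs, as already used in the proof of Theorem~\ref{sdr}). Under duality one also has ``loop in $\G$'' $\leftrightarrow$ ``doop in $\G^{*}$'' (Definition~\ref{doopdef}), together with $(\G^{*})^{*}/\B_{\G^{*}}=\G/\V$ and $\G^{*}/\V_{\G^{*}}=\G^{*}/\B$, since passing to the dual swaps the vertex and boundary colourings and the vertex/boundary-component correspondence. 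Applying the first table to $\G^{*}$ and translating through this dictionary reproduces the second table line for line. I expect the genuine technical obstacle to be the justification, in the second paragraph, that contracting all of $e^c$ merges the vertex colour classes exactly along the components of $(\G/\V)\ba e$ — that is, reconciling the iterative union-of-colour-classes rule for contraction with connectivity in $\G/\V$ — and, to a lesser extent, the parallel bookkeeping for boundary components in the third paragraph; appealing to Corollary~\ref{gp}\ref{gp1} (respectively, to the duality identity of Theorem~\ref{zxz}) is what keeps both of these under control.
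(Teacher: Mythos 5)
Your proof is correct and follows essentially the same route as the paper's: Proposition~\ref{bhj} identifies the underlying ribbon graph of $\G/e^c$ from the doop type, the $\G^*/\B$ and $\G/\V$ columns are obtained by commuting the colourings past contraction (the paper checks $(\G/f)/\V=(\G/\V)/f$ locally via Table~\ref{tablevp} and uses that contraction preserves boundary components, which is what your appeal to Corollary~\ref{gp} amounts to), and the second table is deduced by duality exactly as you describe. The only cosmetic slip is that your sentence beginning ``When $e$ is not a doop'' cannot literally cover the $\nl$ row (there $e$ \emph{is} a doop), but the same observation---that $e$ is a loop in the underlying abstract graph of $\G/e^c$, hence a loop in $(\G/\V)/e^c$, hence not a bridge in $\G/\V$---disposes of that case, as your earlier remark about $\nl$ already indicates.
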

\begin{proof}
Let $f\neq e$ be an edge of $\G$ and let $\tilde{\G}$ denote the underlying (non-coloured) ribbon graph of $\G$. 

Consider $\G/ f$. 
Since contraction preserves boundary components, if $e$ touches boundary components of a single colour (respectively, two colours) in $\G$, then it does the same in $\G/f$, and hence also in $\G/e^c$.   It follows that $e$ is a loop in $\G^*/\B$ if and only if it is one in $(\G/ e^c)^*/\B$.

Trivially  $\widetilde{(\G/ f)} =\tilde{\G}/ f$. Then by Proposition~\ref{bhj}, the doop-type determines the underlying ribbon graph of $\G/e^c$.

By considering Table~\ref{tablevp}, and in each case forming the abstract graphs $ (\G /f ) /\V$ and   $(\G/\V)/ f$ we see that  $ (\G /f ) /\V=  (\G/\V)/ f$.  (It is worth emphasising that the contraction on the left-hand side is ribbon graph contraction and that on the right is graph contraction.) It follows that  $ (\G / e^c ) /\V=  (\G/\V)/ e^c$ and hence  $(\G / e^c ) /\V$ is a bridge if and only if is a bridge in $(\G/\V)/ e^c$, which happens if and only if it is a bridge in  $\G/\V$.

Collecting these three facts gives the results about $\G/e^c$. The remaining five cases can be proved by an analogous argument, or by duality.
\end{proof}

Note that while Lemma~\ref{asr} appears to imply that there are 25 edge-types, not all types are realisable. For example, type $ (\bs,\olc )$ edges are not possible since if $e$ is a loop in $\G$ it must be a loop in $\G/ \V$. Furthermore, there is redundancy in the descriptions of the cases in Lemma~\ref{asr} since, for example, using again that a loop in $\G$ must be a loop in $\G/ \V$, we can see that the last clause in $\G\ba e^c=\olh$ is not needed.

The following are standard facts about the rank function of a graph.
\begin{equation}\label{e.7a}
r(G) =
  \begin{cases}
    r(G\ba e)       & \quad \text{if } e \text{ is a bridge}\\
    r(G\ba e)+1  & \quad \text{otherwise} \\
  \end{cases}
  \end{equation}
\begin{equation}\label{e.7b}
r(G) =
  \begin{cases}
    r(G/e)       & \quad \text{if } e \text{ is a loop}\\
    r(G/e)+1  & \quad \text{otherwise} \\
  \end{cases}
  \end{equation}

\begin{lemma}\label{asew}
Let $\G$ be a ribbon graph. Then
\begin{equation}\label{e.8a} \rho(\G) =
  \begin{cases}
   \rho(\G\ba e)+1       & \quad \text{if }  e\text{ is a not a doop},\\
   \rho(\G\ba e)  & \quad \text{if }  e\text{ is an orientable doop},\\
   \rho(\G\ba e)+\frac{1}{2}  & \quad \text{if }  e\text{ is a non-orientable doop};\\
  \end{cases}
\end{equation}
and
\begin{equation}\label{e.8b} \rho(\G) =
  \begin{cases}
    \rho(\G/e)+1       & \quad \text{if }  e\text{ is a not a loop},\\
   \rho(\G/e)  & \quad \text{if }  e\text{ is an orientable loop},\\
   \rho(\G/e)+\frac{1}{2}  & \quad \text{if }  e\text{ is a non-orientable loop}.
  \end{cases}
\end{equation}

\end{lemma}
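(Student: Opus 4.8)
The plan is to reduce both displays to the single identity \eqref{e.vpa}, namely $\rho(\G)=\tfrac{1}{2}\bigl(v(\G)+|E(\G)|-b(\G)\bigr)$, together with the elementary fact that contracting an edge never changes the number of boundary components of a ribbon graph (visible in Table~\ref{tablecontractrg}, and already noted when boundary colourings were introduced). So $\rho$ can change under contraction only through the vertex count, and under deletion only through the boundary‑component count, and in each case the amount of change is controlled by which of the three columns of the relevant table the edge falls into. I would prove \eqref{e.8b} directly this way, and then obtain \eqref{e.8a} either by the same bookkeeping with Table~\ref{tablefp} or, more efficiently, by duality.

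For \eqref{e.8b}: since $b(\G)=b(\G/e)$ and $|E(\G/e)|=|E(\G)|-1$, equation \eqref{e.vpa} gives
\[ \rho(\G)-\rho(\G/e)=\tfrac{1}{2}\bigl(v(\G)-v(\G/e)+1\bigr). \]
Now Table~\ref{tablecontractrg} (with Proposition~\ref{bhj} used to identify exactly which configuration each column represents) tells us that contracting $e$ merges its two endvertices into one when $e$ is not a loop, so $v(\G)-v(\G/e)=1$; leaves the vertex count unchanged when $e$ is a non‑orientable loop, so the difference is $0$; and splits the vertex into two when $e$ is an orientable loop, so the difference is $-1$. Substituting gives $1$, $\tfrac{1}{2}$, $0$ respectively, which is exactly \eqref{e.8b}. (One could instead run this through \eqref{e.7b} and the effect of contraction on the Euler genus, but the boundary‑component route is cleanest since it isolates the only parameter that moves.)

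For \eqref{e.8a}: I would deduce it from \eqref{e.8b} rather than repeat the analysis. Reading \eqref{e.vpa} on $\G^*$ and using that duality matches $v(\G^*)$ with $b(\G)$ and $b(\G^*)$ with $v(\G)$ while preserving $|E|$, one gets $\rho(\G^*)=|E(\G)|-\rho(\G)$. By Theorem~\ref{zxz} we have $(\G\ba e)^*=\G^*/e$, and by Definition~\ref{doopdef} the edge $e$ is a (non‑)orientable doop of $\G$ precisely when $e^*$ is a (non‑)orientable loop of $\G^*$. Applying \eqref{e.8b} to $\G^*$ at the edge $e^*$ and translating each of the three terms back through $\rho(\cdot^*)=|E|-\rho(\cdot)$ converts each "$+c$" into "$+(1-c)$", which yields the three cases of \eqref{e.8a}. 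A direct computation from \eqref{e.vpa}, using that deletion of $e$ splits, merges, or fixes the boundary component(s) touching $e$ according to the three columns of Table~\ref{tablefp}, gives the same conclusion.

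I do not expect a genuine obstacle: once \eqref{e.vpa} and the "contraction preserves $b$" fact are in hand, both halves are one‑line substitutions. The only place that really needs care — and where it is easy to slip — is the matching of the three doop/loop types to the three values $1$, $0$, $\tfrac{1}{2}$. I would pin this down first by checking it against the one‑edge ribbon graphs $\br,\ol,\nl$ of Figure~\ref{fwe} (for instance $\br\ba e$ is two isolated vertices, so $\rho(\br)=1=\rho(\br\ba e)+1$ while $e$ is an orientable doop of $\br$; and $\ol\ba e$ is one vertex, so $\rho(\ol)=0=\rho(\ol\ba e)$ while $e$ is not a doop), using Proposition~\ref{bhj} to make the identification of each column unambiguous, and only then carry out the general substitutions.
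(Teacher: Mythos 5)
Your method for \eqref{e.8b} is exactly the paper's: the proof given there is precisely the one-line observation that, via \eqref{e.vpa}, one need only track how contraction changes $v$ (it preserves $b$) and how deletion changes $b$, case by case. Your duality route to \eqref{e.8a}, via $\rho(\G^*)=|E|-\rho(\G)$, Theorem~\ref{zxz}, and Definition~\ref{doopdef}, is a clean alternative to redoing the bookkeeping with Table~\ref{tablefp}, and it is sound.

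The problem is the final assertion that the translation ``converts each $+c$ into $+(1-c)$, which yields the three cases of \eqref{e.8a}.'' It does not: it yields $\rho(\G)=\rho(\G\ba e)$ when $e$ is \emph{not} a doop (then $e^*$ is not a loop in $\G^*$, so $c=1$) and $\rho(\G)=\rho(\G\ba e)+1$ when $e$ is an \emph{orientable doop} ($c=0$), i.e.\ the transpose of the first two printed cases of \eqref{e.8a}. Your own sanity checks prove exactly this: you correctly compute $\rho(\br)=\rho(\br\ba e)+1$ with $e$ an orientable doop of $\br$, and $\rho(\ol)=\rho(\ol\ba e)$ with $e$ not a doop --- both of which contradict \eqref{e.8a} as displayed --- and yet you offer them as confirmation. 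The resolution is that \eqref{e.8a}, like the ``standard fact'' \eqref{e.7a} it is modelled on (which as printed also has its first two cases interchanged: deleting a bridge drops the rank by one, deleting a non-bridge leaves it fixed), is mistyped, and it is the transposed version that the rest of the paper actually uses (for instance, the values $\delta_{2,\bp}=1$ and $\delta_{2,\olc}=0$ in Lemma~\ref{rsf} force an orientable doop to contribute $+1$ to $\rho$ under deletion and a non-doop to contribute $0$). So your computation is correct, but you must either flag the transposition in the statement or you are claiming to have proved something your own examples refute; as written the proposal is internally inconsistent at precisely the point you yourself identify as ``where it is easy to slip.''
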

\begin{proof}
The identities are easily verified by writing $\rho(E)= \frac{1}{2}\left( |E|+ |V| -b(E)\right)$, via \eqref{e.vpa}, and considering  how deletion changes the number of boundary components, and how contraction changes the number of vertices (it does not change the number of boundary components), in the various cases. We omit the details.
\end{proof}

\begin{lemma}\label{rsf}
Let $\G=(V,E)$ be a coloured ribbon graph with vertex colouring $\V$ and boundary colouring $\B$, and  $r_1,\ldots, r_4$ be as in Theorem~\ref{zxa}. 
Recall that $r_{k, \G} (A):= r_k(\G \ba A^c) $, for $k=1,2,3,4$.
Then,  for $k=1,2,3,4$ and $i,j \in \{ \bs,\bp,\olc,\olh,\nl \}$ we have the following.

Firstly,
\begin{equation}\label{hasw1}
  r_k(\G) = r_k(\G\ba e)  +  \delta_{k,i}   ,  
  \end{equation}
and if $e\notin A$, 
\begin{equation}\label{hasw2}
   r_{k,\G}(A) = r_{k,\G \ba e} (A) +  \delta_{k,i}   ,  
     \end{equation}
where $ \G /  e^c =i $ and 
\[   \delta_{k,j} =
\begin{cases}
1 & \text{when } (k,j) \text{ is one of } (1,\bs), (2,\bp), (3,\olc),  (4,\olh)  \\
\tfrac{1}{2} &  \text{when } (k,j) \text{ is one of }   (2,\nl), (4,\nl) \\
0 & \text{otherwise}
\end{cases}
 \]

Secondly,
\begin{equation}\label{hasw3}
  r_k(\G) = r_k(\G/e)  +  \epsilon_{k,j} ,  
    \end{equation}
and if $e\in A$, 
\begin{equation}\label{hasw4}
   r_{k,\G}(A) = r_{k,\G/e} (A\ba e) +  \epsilon_{k,j}  ,  
     \end{equation}
where  $\G\ba e^c =j$ and
\[   \epsilon_{k,j} =
\begin{cases}
1 & \text{when } (k,j) \text{ is one of } (1,\bs),  (2,\bp), (3,\olc), (4,\olh) \\
\tfrac{1}{2} &  \text{when } (k,j) \text{ is one of }  (2,\nl), (4,\nl)\\
0 & \text{otherwise.}
\end{cases}
 \]
Thirdly
\[
\kappa(\G/\V) = 
\begin{cases}
\kappa((\G\ba e)/\V)  - 1& \text{if }\G/  e^c = \bs \\
\kappa((\G\ba e)/\V)  & \text{if }\G/ e^c \neq \bs \\
\kappa((\G/e)/\V) &\text{always}
\end{cases}
\]

\[
\kappa(\G^*/\B) =
\begin{cases}
\kappa((\G^*\ba e)/\B) &\text{always}\\
\kappa((\G^*/e)/\B)  - 1& \text{if }\G\ba e^c = \olc  \\
\kappa((\G^*/e)/\B)  & \text{if }\G\ba e^c \neq \olc.  \\
\end{cases}
\]
Finally, 
\[
v(\G) =
\begin{cases}
v(\G/e) &\text{if }\G\ba e^c = \nl \\
v(\G/e) - 1& \text{if }\G\ba e^c = \olc \text{ or } \olh  \\
v(\G/e) +1 & \text{if }\G\ba e^c = \bs \text{ or } \bp  \\
v(\G\ba e) & \text{always.}
\end{cases}
\]

\end{lemma}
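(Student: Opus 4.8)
The plan is to reduce every identity in the statement to (i) the elementary behaviour of the graph rank function under deletion and contraction, (ii) Lemma~\ref{asew} for the parameter $\rho$, and (iii) the standard ribbon-graph contraction table (Table~\ref{tablecontractrg}, Proposition~\ref{bhj}) for the vertex count, with the edge-type hypotheses supplied by Lemma~\ref{asr}. The one structural input that makes this work is a pair of ``commutation'' facts: from the proof of Lemma~\ref{asr} one has $(\G\ba e)/\V=(\G/\V)\ba e$ and $(\G/e)/\V=(\G/\V)/e$ as abstract graphs, and applying the same facts to the ribbon graph $\G^*$ together with Theorem~\ref{zxz} (so that $(\G\ba e)^*=\G^*/e$ and $(\G/e)^*=\G^*\ba e$) gives $(\G\ba e)^*/\B=(\G^*/\B)/e$ and $(\G/e)^*/\B=(\G^*/\B)\ba e$. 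The crucial point to keep in mind throughout is the deletion$\leftrightarrow$contraction swap on the dual side: deleting $e$ in $\G$ \emph{contracts} $e$ in $\G^*/\B$, and conversely. I would record these four identities first.

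Second, I would reduce the subset statements \eqref{hasw2} and \eqref{hasw4} to the full-support statements \eqref{hasw1} and \eqref{hasw3}. Since $r_{k,\G}(A)=r_k(\G\ba A^c)$, a subset identity is just a full-support identity for a minor of $\G$, so it suffices to prove \eqref{hasw1} and \eqref{hasw3} in full generality; equivalently, all these cases amount to recording the matroid restriction and contraction formulas for the rank functions of $\G/\V$ and $\G^*/\B$. For $k=1$ this is \eqref{e.7a}--\eqref{e.7b} applied to $\G/\V$, with ``$e$ is a bridge in $\G/\V$'' and ``$e$ is a loop in $\G/\V$'' translated into the edge-type conditions of Lemma~\ref{asr}; for $k=3$ it is the same facts applied to $\G^*/\B$, now with the deletion and contraction roles interchanged (which is exactly why $\delta_{3,\cdot}$ is indexed by $\G/e^c$ and $\epsilon_{3,\cdot}$ by $\G\ba e^c$ in the way stated). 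The cases $k=2$ and $k=4$ then follow by subtraction, using $r_2=\rho-r_1$ and $r_4=|A|-r_3-\rho$ together with $|A\ba e|=|A|-1$ under contraction, the $\rho$-term being handled by \eqref{e.8a}--\eqref{e.8b}; in particular the half-integer entries of $\delta$ and $\epsilon$ at $\nl$ come precisely from the $\tfrac12$ appearing in Lemma~\ref{asew} for non-orientable doops and loops.

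Third, the remaining three groups are short. For $\kappa(\G/\V)$: deleting $e$ in $\G$ deletes $e$ in $\G/\V$, which raises the number of components by exactly one when $e$ is a bridge there (i.e.\ when $\G/e^c=\bs$) and leaves it unchanged otherwise, while contracting $e$ in $\G$ contracts $e$ in $\G/\V$ and never changes the number of components. For $\kappa(\G^*/\B)$ the same reasoning applies to $\G^*/\B$ with deletion and contraction swapped, and ``$e$ is a bridge in $\G^*/\B$'' corresponds to $\G\ba e^c=\olc$. For $v(\G)$: deletion leaves the vertex set untouched, so $v(\G)=v(\G\ba e)$, while contraction changes it according to Table~\ref{tablecontractrg}, namely by $-1$ when $e$ is not a loop ($\G\ba e^c=\bs$ or $\bp$), by $+1$ when $e$ is an orientable loop ($\G\ba e^c=\olc$ or $\olh$), and not at all when $e$ is a non-orientable loop ($\G\ba e^c=\nl$), which is the stated table after rearranging.

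The main obstacle is not any single computation but the bookkeeping: one must track, across the (at most twenty-five, and fewer once non-realisable combinations such as $(\bs,\olc)$ are discarded) edge types of Lemma~\ref{asr}, which operation on $\G$ induces which operation on $\G/\V$ as opposed to on $\G^*/\B$, and handle the fact that duality interchanges the two. The cleanest way to organise this is to prove the $k=1,2$ identities and the $\G/\V$ parts of the $\kappa$ and $v$ statements directly, then obtain the $k=3,4$ identities and the $\G^*/\B$ part of the $\kappa$ statement by applying the already-established results to $\G^*$ and invoking Theorem~\ref{zxz} together with the duality identities $r_{1,\G}(E)-r_{1,\G}(A)=r_{3,\G^*}(A^c)$ and $r_{2,\G}(E)-r_{2,\G}(A)=r_{4,\G^*}(A^c)$ recorded in the remark after Theorem~\ref{sdr}.
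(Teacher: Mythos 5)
Your proposal follows essentially the same route as the paper's proof: it records the same commutation identities between the colouring quotients and deletion/contraction (with the deletion--contraction swap on the dual side), expresses $r_2$ and $r_4$ in terms of $r_1$, $r_3$, $|A|$ and $\rho$ so that \eqref{e.7a}--\eqref{e.8b} and Lemma~\ref{asew} do all the work, translates each case condition via Lemma~\ref{asr}, and reduces the subset identities \eqref{hasw2} and \eqref{hasw4} to full-support identities for minors exactly as the paper does. The only deviation is organisational and harmless: you obtain the $k=3,4$ and $\G^*/\B$ cases by applying the $k=1,2$ arguments to $\G^*$ via Theorem~\ref{zxz} and the duality identities, where the paper instead runs the analogous direct argument on $\G^*/\B$.
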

\begin{proof}
Each function $r_k(A)$ is expressible in terms of $r_{\G^*/\B}(A)$, $\rho_{\G}(A)$, and $r_{\G/\V}(A)$. Equations~\eqref{e.7a}--\eqref{e.8b} describe how those functions act under deletion and contraction. 
 
Trivially,  $\widetilde{(\G/ e)} =\tilde{\G}/ e$ and $\widetilde{(\G\ba  e)} =\tilde{\G}\ba e$.
From the proof of Lemma~\ref{asr}, $ (\G /e ) /\V=  (\G/e)/ \V$, and, trivially, $ (\G \ba e ) /\V=  (\G\ba e)/ \V$. 
From Table~\ref{tablefp} it is readily seen that $ (\G^* \ba e ) /\B=  (\G^*/ \B) / e$. Similarly, by considering $G^*$ locally at an edge and forming $ (\G^* / e ) /\B$  and $(\G^*/ \B)\ba e$ it is seen that $ (\G^* / e ) /\B=  (\G^*/ \B)\ba e$. These identities are used in the omitted rank calculations.

Then 
\begin{align*}
r_1(\G) = r(\G/\V)  &=   
 \begin{cases}
    r(G\ba e)       & \quad \text{if } e \text{ is a bridge in } \G/\V \\
    r(G\ba e)+1  & \quad \text{otherwise} 
  \end{cases}
  \\&
  = 
   \begin{cases}
    r(G\ba e)       & \quad \text{if } \G/e^c =  \bs\\
    r(G\ba e)+1  & \quad \text{otherwise} ,
  \end{cases}
  \end{align*}
where the last equality is by Lemma~\ref{asr}. 
Similar arguments give \eqref{hasw1} and \eqref{hasw3}.

For \eqref{hasw2}, if $e\notin A$ then 
\[  r_{k,\G} (A)  =   r_k(\G\ba A^c)   =   r_k((\G\ba A^c) \ba e ) +\delta_{k,j}=  r_k((\G \ba e) \ba A^c )+\delta_{k,j} =  r_{k,\G\ba e} (A)+\delta_{k,j}. \]

For \eqref{hasw4}, if $e\in A$ then 
\[  r_{k,\G} (A)  =   r_k(\G\ba A^c)   =   r_k((\G\ba A^c) /e ) +\epsilon_{k,j}=  r_k((\G /e) \ba A^c )+\epsilon_{k,j} =  r_{k,\G/e} (A\ba e)+\epsilon_{k,j} . \]

By \eqref{e.7a},  $\kappa(\G/\V)$ and $\kappa((\G\ba e)/\V)$ differ if and only if $e$ is a bridge in $\G/\V$, in which case  $\kappa(\G/\V)= \kappa((\G\ba e)/\V)  - 1$.  By Lemma~\ref{asr} this will happen if and only if $\G/e^c$ is $\bs$.
 Also, since graph contraction does not change the number of connected components  $\kappa(\G/\V)=\kappa((\G/ e)/\V)$.

By \eqref{e.7b},  and recalling that contraction of an edge of a coloured ribbon graph acts as deletion in $\G^* /\B$,   $\kappa(\G^* /\B)$ and $\kappa((\G^*/ e)/\B)$ differ if and only if $e$ is a bridge in $\G^*/\B$ in which case $\kappa(\G^* /\B)=\kappa((\G^*/ e)/\B)-1$. 
 Also, since  deletion of an edge of a coloured ribbon graph acts as contraction in $\G^* /\B$ and 
  graph contraction does not change the number of connected components  $\kappa(\G^*/\B)=\kappa((\G^*/ e)/\B)$.
  
Finally, the number of vertices will change under contraction if $e$ is a non-loop edge, in which case it  decreases by one, or if $e$ is an orientable loop, in which case it increases by one. By Lemma~\ref{asr}, this happens if  $\G\ba e^c = \olc \text{ or } \olh$, or    $\G\ba e^c = \bs \text{ or } \bp$, respectively. Deletion does not change the number of vertices. 
\end{proof}

\begin{proof}[Proof of Theorem~\ref{zxa}]
The method of proof is a standard one in the theory of the Tutte polynomial. We let
\begin{multline*}
 \theta(\G,A) :=
\alpha^{k(\G/\V)} \beta^{k(\G^*/\B)} \gamma^{v(\G)}
(\alpha \, a_{\bs})^{r_1(\G)} 
a_{\bp}^{r_2(\G)}
a_{\olc}^{r_3(\G)}
 a_{\olh}^{r_4(\G)} \\
\left(\frac{b_{\bs}}{\alpha \gamma\,  a_{\bs}}\right)^{r_1(A) }
\left(\frac{b_{\bp}}{\gamma\, a_{\bp}}\right)^{r_2(A)} 
\left(\frac{\beta\gamma\,  b_{\olc}}{a_{\olc}}\right)^{r_3(A)}
\left(\frac{\gamma\, b_{\olh}}{a_{\olh}}\right)^{ r_4(A)}
  \end{multline*}
so that, with this notation, the theorem claims that $U(\G)=\sum_{A\subseteq E} \theta(\G,A)$.

We prove the result by induction on the number of edges of $\G$. If $\G$ is edgeless then the result is easily seen to hold. Now let $\G$ be a coloured ribbon graph on at least one edge and suppose  that  the theorem holds for all coloured ribbon graphs with fewer edges than $\G$.

Let $e$ be an edge of $\G$. Then we can write 
\begin{equation}\label{vbv}
\sum_{A\subseteq E(\G)} \theta(\G,A) = \sum_{\substack{A\subseteq E(\G) \\ e\notin A}} \theta(\G,A)  +   \sum_{\substack{A\subseteq E(\G) \\ e\in A}} \theta(\G,A).
\end{equation}


Consider the first sum in the right-hand side of \eqref{vbv}. Using Lemma~\ref{rsf} and the inductive hypothesis to rewrite the exponents in terms of $\G\ba e$, we have
\[  \theta(\G,A)  =    
\begin{cases}
a_{\bs} \,  \theta(\G\ba e,A\ba e)  & \text{if }\G/  e^c = \bs \\ 
a_{\bp} \, \theta(\G\ba e,A\ba e)  & \text{if }\G/ e^c = \bp \\ 
a_{\olc} \,  \theta(\G\ba e,A\ba e)  & \text{if }\G/ e^c = \olc \\ 
a_{\olh} \, \theta(\G\ba e,A\ba e)  & \text{if }\G/ e^c = \olh \\ 
a_{\bp}^{1/2} b_{\olh}^{1/2} \, \theta(\G\ba e,A\ba e)  & \text{if }\G/ e^c = \nl. 
\end{cases}
 \]


Now consider the second sum in the right-hand side of \eqref{vbv}. Using Lemma~\ref{rsf} and the inductive hypothesis to rewrite  the exponents in terms of $\G/e$ we have
\[  \theta(\G,A)  =   
\begin{cases}
b_{\bs} \,  \theta(\G/e,A\ba e)  & \text{if }\G\ba e^c = \bs \\ 
b_{\bp} \, \theta(\G/e,A\ba e)  & \text{if }\G\ba e^c = \bp \\ 
b_{\olc} \,  \theta(\G/e,A\ba e)  & \text{if }\G\ba e^c = \olc \\ 
b_{\olh} \, \theta(\G/e,A\ba e)  & \text{if }\G\ba e^c = \olh \\ 
b_{\bp}^{1/2} b_{\olh}^{1/2} \, \theta(\G/e,A\ba e)  & \text{if }\G\ba e^c = \nl. 
\end{cases}
 \]
 
Collecting this together, we have shown that 
\[ \sum_{A\subseteq E(\G)} \theta(\G,A)  = 
   a_{i}\left(   \sum\limits_{A\subseteq E(\G\ba e)} \theta(\G\ba e,A\ba e)  \right) +b_{j}  \left(   \sum\limits_{A\subseteq E(\G/ e)} \theta(\G/ e,A\ba e)  \right)  
\]
if $e$ is of type $(i,j)$, 
and 
$\alpha^{n}\beta^{m} \gamma^v$ if  $\G$  is edgeless, with $v$ vertices, $n$ vertex colour classes, and $m$  boundary colour classes, and
where  $a_{\nl}=a_{\bp}^{1/2}a_{\olh}^{1/2}$ and $b_{\nl}=b_{\bp}^{1/2}b_{\olh}^{1/2}$.
The theorem follows.
\end{proof}

\begin{proof}[Proof of Theorem~\ref{mnc}]
We use Lemma~\ref{asr} to give the alternative descriptions of $(i,j)$-edges.
By Theorem~\ref{zxa}, $T_{ps}(\G; x,y,a,b)$ satisfies the deletion-contraction relations in Theorem~\ref{fdgu} with $a_{\bs}=w$,  $a_{\bp}=x$, $b_{\olc}=y$, $b_{\olh}=z$,  $a_{\nl}=\sqrt{x}$, $b_{\nl}=\sqrt{z}$, and all other variables set to 1. 
Note that if  $e$ is a bridge in $\G^*/\B$ then it must be  an orientable loop in $\G$ and a loop in $\G/\V$.   Also if $e$ is a loop in $\G$ then it must be a loop in $\G/\V$. These observations simplify the descriptions.
\end{proof}

\section*{Acknowledgements}

We are grateful to the London Mathematical Society for its support, and to Chris and Diane Reade for their hospitality.

\end{document}